\newtheorem{theorem}{Theorem}[section]
\newtheorem{corrly}{Corollary}
\newtheorem{lemma}{Lemma}
\newtheorem{definition}{Definition}
\newtheorem{algorithm}{Algorithm}
\newcommand{\Prob}{\mathbb{P}}
\newcommand{\Expect}{\mathbb{E}}
\newcommand{\indic}{\mathbb{I}}
\newcommand{\epp}{\epsilon}
\definecolor{lightblue}{rgb}{.7, .8, 1}
\definecolor{lightgreen}{rgb}{.6, 1, .6}
\definecolor{brown}{rgb}{1,0.38,0.03}
\definecolor{OliveGreen}{rgb}{.2,0.6,0.2}
\definecolor{BrickRed}{rgb}{.7,0.2,0.2}
\newcommand{\ignore}[1]{} 
\newcommand{\ADD}{{\mathsf{ADD}}}
\newcommand{\CADD}{{\mathsf{CADD}}}
\newcommand{\WADD}{{\mathsf{WADD}}}
\newcommand{\FAR}{{\mathsf{FAR}}}
\newcommand{\PFA}{\mathsf{PFA}}
\newcommand{\ANO}{{\mathsf{ANO}}}
\newcommand{\mc}[1]{\mathcal{#1}} 
\newcommand{\Cc}{{\mc{C}}}
\newcommand{\Sc}{{\mc{S}}}
\newcommand{\Dd}{{\mc{D}}}
\newcommand{\taus}{\tau_{\scriptscriptstyle \text{S}}}
\newcommand{\tauc}{\tau_{\scriptscriptstyle \text{C}}}
\newcommand{\tausr}{\tau_{\scriptscriptstyle \text{SR}}}
\newcommand{\tausrp}{\tau_{\scriptscriptstyle \text{SRP}}}
\newcommand{\tausrr}{\tau_{\scriptscriptstyle \text{SR}-r}}
\newcommand{\taug}{\tau_{\scriptscriptstyle \text{G}}}
\newcommand{\taum}{\tau_{\scriptscriptstyle \text{M}}}
\newcommand{\htaug}{\hat{\tau}_{\scriptscriptstyle \text{G}}}
\newcommand{\htaum}{\hat{\tau}_{\scriptscriptstyle \text{M}}}
\begin{document}

\title{Quickest Change Detection}
\author{Venugopal V. Veeravalli and Taposh Banerjee\\ ECE Department and Coordinated Science Laboratory\\
1308 West Main Street, Urbana, IL~~61801\\Email: {vvv, banerje5}$@$illinois.edu. Tel: +1(217) 333-0144, Fax: +1(217) 244-1642.}
\maketitle

\centerline{\bf Abstract}
 The problem of detecting changes in the statistical properties
of a stochastic system and time series arises in various branches of science and engineering. It has a wide spectrum of important applications ranging from machine monitoring to biomedical signal processing. In all of these applications the observations being monitored undergo a change in distribution in response to a change or anomaly in the environment, and  the goal is to detect the change as quickly as possibly, subject to false alarm constraints.  In this chapter, two formulations of the quickest change detection problem, Bayesian and minimax, are introduced, and optimal or asymptotically optimal solutions  to these formulations are discussed. Then some generalizations and extensions of the quickest change detection problem are described. The chapter is concluded with a discussion of applications and open issues.


\medskip

%
%
%


\section{Introduction}
\label{sec:Intro}
The problem of quickest change detection comprises three entities: a stochastic process under observation, a change point at which the statistical properties of the process undergo a change, and a decision maker that observes the stochastic process and aims to detect this change in the statistical properties of the process. A {\em false alarm} event  happens when the change is declared by the decision maker before the change actually occurs. The general objective of the theory of quickest change detection is to design algorithms that can be used to detect the change as soon as possible,  subject to false alarm constraints.

The quickest change detection problem has  a wide range  of important applications, including biomedical signal and image processing, quality control engineering, financial markets, link failure detection in communication networks, intrusion detection in computer networks and security systems, chemical or biological warfare agent detection systems (as a protection tool against terrorist attacks), detection of the onset of an epidemic, failure
detection in manufacturing systems and large machines, target detection in surveillance systems, econometrics, seismology, navigation, speech segmentation, and the analysis of historical texts. See Section~\ref{sec:QCDapplications} for a more detailed discussion of the applications and related references.

To motivate the need for quickest change detection algorithms, in Fig.~\ref{fig:Samples_Gaussian} we plot
a sample path of a stochastic sequence whose samples are distributed as ${\cal N}(0,1)$ before the change, and distributed as ${\cal N}(0.1,1)$ after the change. For illustration, we choose time slot 500 as the change point.
As is evident from the figure, the change cannot be detected through manual inspection.
In Fig.~\ref{fig:PkEvolution_Gaussian},
we plot the evolution of the \textit{Shiryaev statistic} (discussed in detail in Section~\ref{sec:BayesCent}), computed using the samples of Fig.~\ref{fig:Samples_Gaussian}.
As seen in Fig.~\ref{fig:PkEvolution_Gaussian}, the value of the Shiryaev statistic stays
close to zero before the change point, and grows up to one after the change point.
The change is detected by using a threshold of 0.8.
\begin{figure}[!tb]
  \centering
  \subfloat[Stochastic sequence with samples from $f_0 \sim {\cal N}(0,1)$ before the change (time slot 500), and with samples from $f_1 \sim {\cal N}(0.1,1)$ after the change.]{\label{fig:Samples_Gaussian}\includegraphics[width=0.45\textwidth]{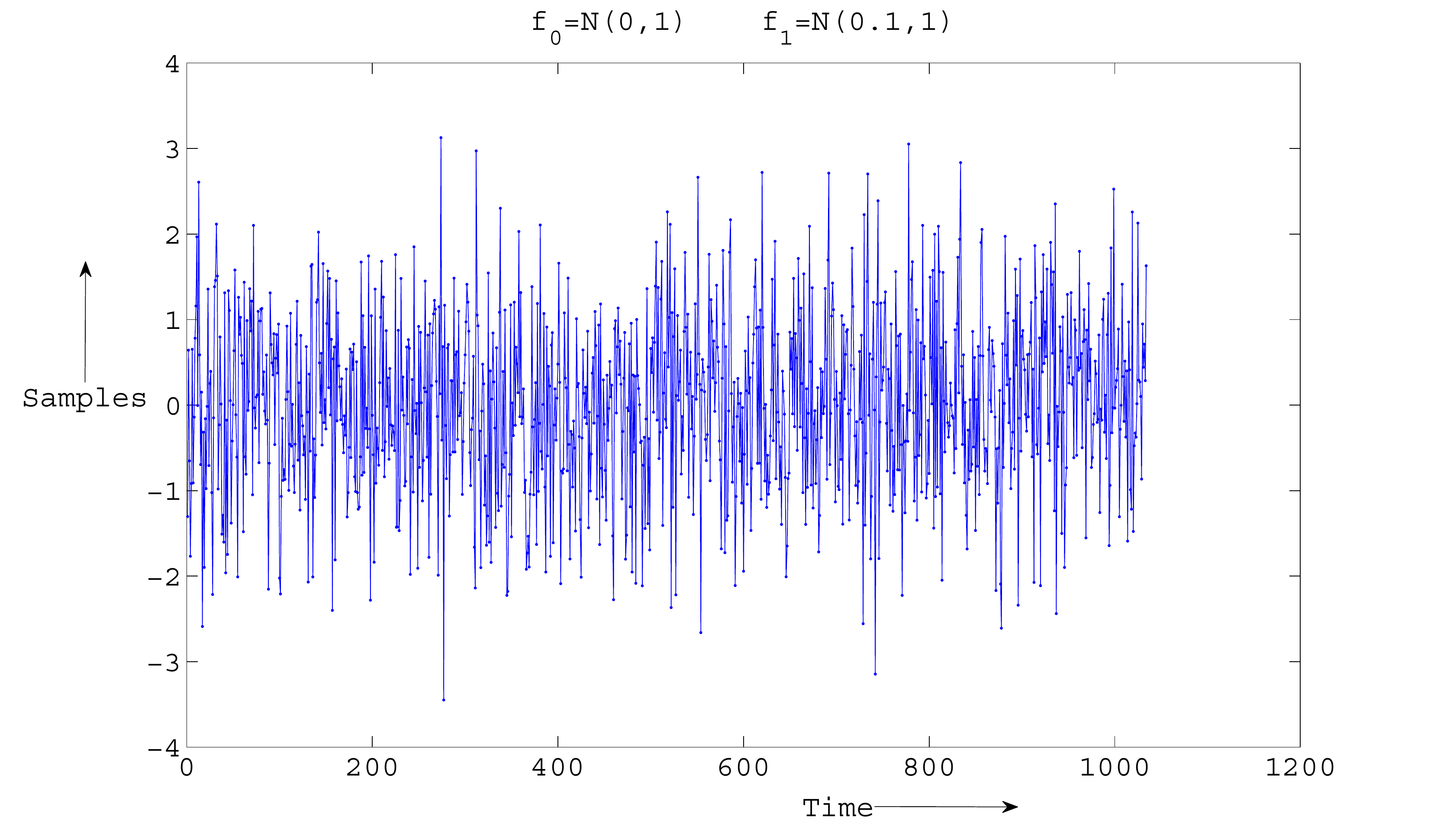}} \hspace{0.5cm}
  \subfloat[Evolution of the classical Shiryaev algorithm when applied to the samples given on the left. We see that the change is detected approximately at time slot 1000.]{\label{fig:PkEvolution_Gaussian}\includegraphics[width=0.45\textwidth]{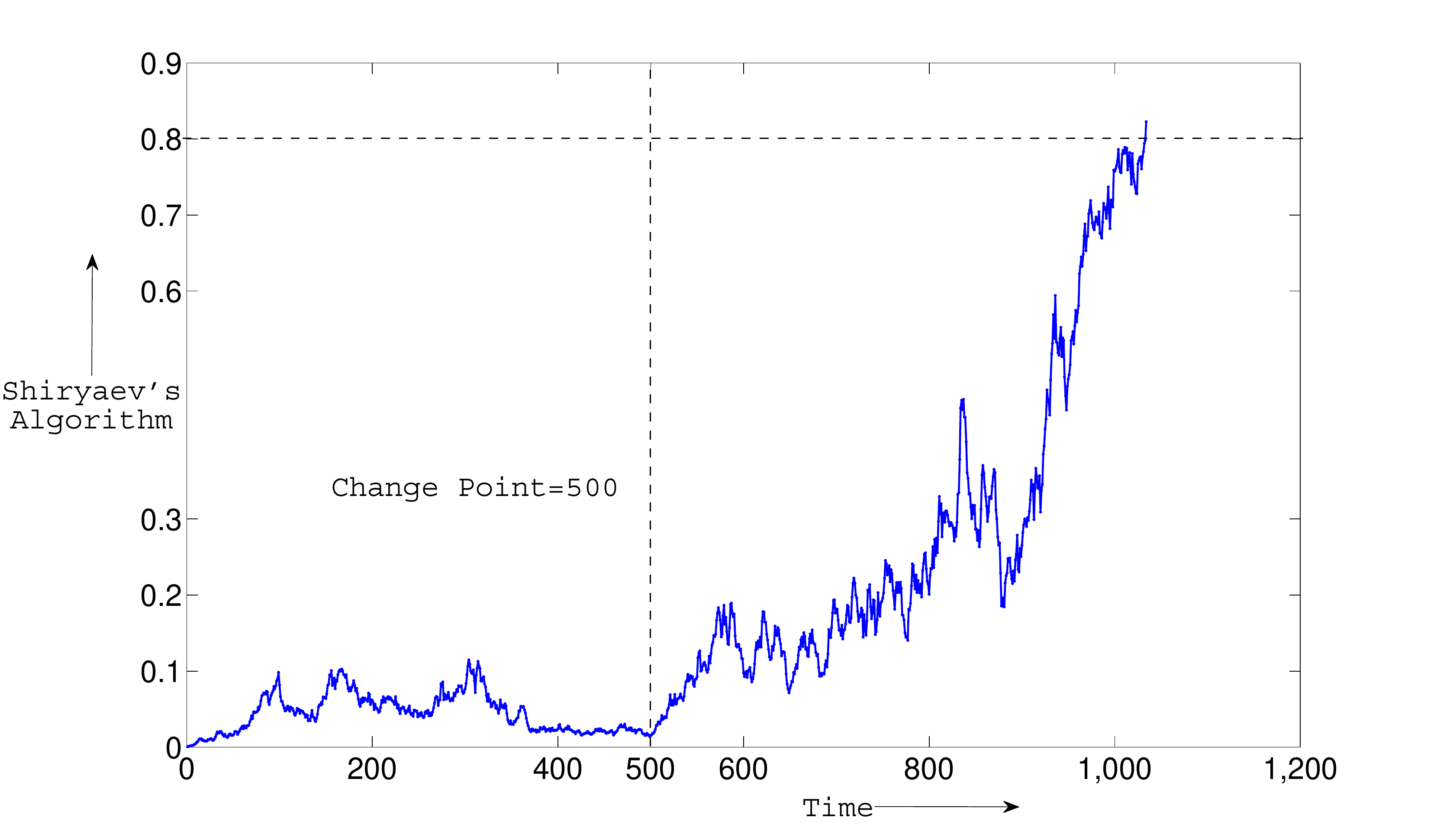}}
    \caption{Detecting a change in the mean of a Gaussian random sequence.\label{fig:CostCurvesAndEvolutionforShiryaev}
}
 \end{figure}

We also see from Fig.~\ref{fig:PkEvolution_Gaussian} that it takes  around 500 samples to detect the change after it occurs. Can we do better than that, at least on  an average? Clearly, declaring change before the change point (time slot 500) will result in zero delay,  but it will cause a false alarm. The theory of quickest change detection deals with finding algorithms that have provable optimality properties, in the sense of minimizing the average detection delay under a false alarm constraint. We will show later that the Shiryaev algorithm, employed in Fig.~\ref{fig:PkEvolution_Gaussian}, is  optimal for a certain \emph{Bayesian} model.

Earliest results on quickest change detection date back to the work of Shewhart \cite{shew-jamstaa-1925,shew-book-1931} and Page \cite{page-biometrica-1954} in the context of statistical process/quality control.
Here the {\em state} of the system is monitored by taking a sequence of measurements, and an alarm has to be raised if the measurements indicate a fault in the process under observation or if the state is \emph{out of control}. Shewhart proposed the use of a control chart to detect a change, in which  the measurements taken over time are plotted on a chart and an alarm is raised the first time the measurements fall outside some pre-specified control limits. In the Shewhart control chart procedure, the statistic computed at any given time is a function of only the measurements at that time, and not of the measurements taken in the past. This simplifies the algorithm but may result in a loss in performance (unacceptable delays when in detecting small changes). In \cite{page-biometrica-1954}, Page proposed that instead of ignoring the past observations, a weighted sum (moving average chart) or a cumulative sum (CuSum) of the past statistics  (likelihood ratios) can be used in the control chart to detect the change more efficiently. It is to be noted that the motivation in the work of Shewhart and Page was to design easily implementable schemes with good performance, rather than to design schemes that could be theoretically proven to be optimal with respect to a suitably chosen performance criterion.

Initial theoretical formulations of the quickest change detection problem were for an observation model in which, conditioned on the change point, the observations are independent and identically distributed (i.i.d.)  with some known distribution before the change point, and i.i.d. with some other known distribution after the change point. This observation model will be referred to as the {\em i.i.d. case} or {\em i.i.d model} in this  article.

The i.i.d. model was  studied by Shiryaev  \cite{shir-siamtpa-1963, shir-opt-stop-book-1978}, under the further assumption that the change point is a random variable with a known geometric distribution.  Shiryaev obtained an optimal algorithm that minimizes the {\em average detection delay} over all stopping times that meet a given constraint on the {\em probability of false alarm}. We refer to Shiryaev's formulation as the {\em Bayesian} formulation; details are provided in Section~\ref{sec:BayesCent}.

When the change point is modeled as non-random but unknown, the probability of false alarm is not well defined and therefore false alarms are quantified through the {\em mean time to false alarm} when the system is operating under the pre-change state, or through its reciprocal, which is called the {\em false alarm rate}. Furthermore, it is generally not possible to obtain an algorithm that is uniformly efficient over all possible values of the change point, and therefore a  {\em minimax} approach is required. The first minimax theory is due to Lorden \cite{lord-amstat-1971} in which he proposed a measure of detection delay obtained by taking the supremum (over all possible change points) of a worst-case delay over all possible realizations of the observations, conditioned on the change point.  Lorden showed that the CuSum algorithm of \cite{page-biometrica-1954} is asymptotically optimal according to his minimax criterion for delay, as the mean time to false alarm goes to infinity (false alarm rate goes to zero). This result was improved upon by Moustakides  \cite{mous-astat-1986} who showed that the CuSum algorithm is exactly optimal under Lorden's criterion. An alternative proof of the
optimality of the CuSum procedure is provided in \cite{ritov-astat-1990}.  See  Section~\ref{sec:MinimaxCent} for details.

Pollak \cite{poll-astat-1985}  suggested modifying Lorden's minimax criterion by replacing the double maximization of Lorden by a single maximization over all possible change points of the detection delay conditioned on the change point. He showed that an algorithm called the Shiryaev-Roberts algorithm, one that is obtained by taking a limit on Shiryaev's Bayesian solution as the geometric parameter of the change point goes to zero, is asymptotically optimal as the false alarm rate goes to zero.
It was later shown in \cite{lai-ieeetit-1998} that even the CuSum algorithm is asymptotically optimum under the Pollak's criterion, as the false alarm rate goes to zero.
Recently a family of algorithms based on the Shiryaev-Roberts statistic was shown to have strong optimality properties as the false alarm rate goes to zero. See \cite{polu-tart-mcap-2011} and Section~\ref{sec:MinimaxCent} for details.

For the case where the pre- and post-change  observations are not independent conditioned on the change point, the quickest change detection problem was studied in
the minimax setting by \cite{lai-ieeetit-1998} and in the Bayesian setting by \cite{tart-veer-siamtpa-2005}. In both of these works, an asymptotic lower bound on the delay is obtained
for any stopping rule that meets a given false alarm constraint (on false alarm rate in \cite{lai-ieeetit-1998} and on the probability of false
alarm in \cite{tart-veer-siamtpa-2005}), and an algorithm is proposed that
meets the lower bound on the detection delay asymptotically. Details are given in Section~\ref{sec:NONIIDBayes} and Section~\ref{sec:LAI}

To summarize, in Sections~\ref{sec:BayesCent} and \ref{sec:MinimaxCent}, we discuss the Bayesian and Minimax versions of the quickest change detection problem, where the  change has to be detected in a single sequence of random variables,  and where the pre- and post-change distributions are given. In Section~\ref{sec:QCDVariants}, we discuss variants and generalizations of the classical quickest change detection problem, for which significant progress has been made.  We consider the cases where the pre- or post-change distributions are not completely specified (Section~\ref{sec:GLRT}),  where there is an additional  constraint on the cost of observations used in the detection process (Section~\ref{sec:DE-QCD}), and where the change has to  detected using multiple geographically distributed sensor nodes (Section~\ref{sec:dss}).
 In Section~\ref{sec:QCDapplications} we provide a brief overview of the applications of quickest change detection.
We conclude in Section~\ref{sec:ConclusionFutureDirect} with a discussion of other possible extensions and future research directions.

For a more detailed treatment of some of the topics discussed in this chapter, we refer the reader to the books  by Poor and Hadjiliadis \cite{poor-hadj-qcd-book-2009} and Chow, Robbins and Siegmund  \cite{chow-robb-sieg-book-1971}, and the upcoming book by Tartakovsky, Nikiforov, and Basseville \cite{tart-niki-bass-2013}.
We will restrict our attention in this chapter to detecting changes in discrete-time stochastic systems; the continuous time setting is discussed in  \cite{poor-hadj-qcd-book-2009}.

In Table~\ref{Tab:Glossary}, a glossary of important symbols used in this chapter is provided.

\begin{table} [htbp]
 \centering
 \centering
 {\small
 \caption{Glossary}
\label{Tab:Glossary}
\begin{tabular}{|l|l|}
\hline
\cline{1-2}
\textbf{Symbol}&\textbf{Definition/Interpretation}\\
\hline
$o(1)$&$x=o(1)$ as $c \to c_0$, if $\forall \epsilon>0$, $\exists \delta>0$ s.t., $|x|\leq \epsilon$ if $|c-c_0|<\delta$\\
$O(1)$&$x=O(1)$ as $c \to c_0$, if $\exists \epsilon>0, \delta>0$ s.t., $|x|\leq \epsilon$ if $|c-c_0|<\delta$\\
$g(c)\sim h(c)$ as $c \to c_0$& $\lim_{c \to c_0} \frac{g(c)}{h(c)}=1$ or $g(c) =  h(c) (1+ o(1))$ as $c \to c_0$\\
$\{X_k\}$&Observation sequence\\
Stopping time $\tau$  on $\{X_k\}$& $\indic_{\{ \tau=n\}}=0 \text{ or } 1$ depends only on the values of $X_1, \ldots, X_n$\\
Change point $\Gamma, \gamma$ & Time index at which distribution of observations changes from $f_0$ to $f_1$\\
$\Prob_{n}$ ($\Expect_{n}$)& Probability measure (expectation) when the change occurs at time $n$\\
$\Prob_\infty$ ($\Expect_\infty$) & Probability measure (expectation) when the change does not occur\\
$\text{ess sup}\ X$ & Essential supremum of $X$, i.e., smallest $K$ such that $\Prob(X\leq K) =1$ \\
$D (f_1\| f_0)$& K-L Divergence between $f_1$ and $f_0$, defined as $\Expect_1\left(\log \frac{f_1 (X)}{f_0(X)} \right)$\\
$(x)^+$ & $\max\{x, 0\}$\\
$\ADD(\tau)$ & $\ADD(\tau)=\sum_{n=0}^\infty \Prob(\Gamma=n) \ \Expect_n \left[(\tau - \Gamma)^+\right]$ \\
$\PFA(\tau)$ & $\PFA(\tau)=\Prob (\tau<\Gamma) = \sum_{n=0}^\infty \ \Prob(\Gamma=n) \Prob_n (\tau<\Gamma)$\\
$\FAR(\tau)$ & $\FAR(\tau)=\frac{1}{\Expect_\infty[\tau]}$\\
$\WADD(\tau)$ & $\WADD(\tau)=\underset{n \geq 1}{\operatorname{\sup}}\ \text{ess sup} \ \Expect_n\left[(\tau-n)^+| X_1, \dots, X_{n-1}\right]$\\
$\CADD(\tau)$ & $\CADD(\tau)=\underset{n \geq 1}{\operatorname{\sup}}\ \Expect_n[\tau-n| \tau\geq n]$.\\
\hline
\end{tabular}}
 \end{table}

\section{Mathematical Preliminaries}

A typical  observation process will be denoted by sequence $\{X_n, n=1, 2, \ldots\}$. Before we describe the quickest change detection problem, we present some useful definitions and results that summarize the required mathematical background. For a detailed treatment of the topics discussed below we recommend \cite{will-book-probmart-1991}, \cite{chow-robb-sieg-book-1971}, \cite{sieg-seq-anal-book-1985} and \cite{wood-nonlin-ren-th-book-1982}.

\subsection{Martingales} \label{sec:martingales}
\begin{definition} The random sequence $\{X_n, n=1, 2, \ldots\}$ is called a {\em martingale} if $\Expect[X_n]$ is finite for all $n$,
and for any $k_1 < k_2 < \cdots < k_n < k_{n+1}$,
\begin{equation} \label{eq:martingale}
\Expect[X_{k_n+1} | X_{k_1} ,\ldots, X_{k_n}] = X_{k_n}
\end{equation}
If the ``$=$" in \eqref{eq:martingale} is replaced by ``$\leq$", then the sequence $\{X_n\}$ is called a {\em supermartingale},
and if the ``$=$" is replaced by ``$\geq$", the sequence is called a {\em submartingale}.
A martingale is both a supermartingale and a submartingale.
\end{definition}
Some important and useful results regarding martingales are as follows:
\begin{theorem}[\cite{chow-robb-sieg-book-1971}] (Kolmogorov's Inequality) Let $\{X_n, n=1, 2, \ldots\}$ be a {\em submartingale}. Then
\[
\Prob \left( \max_{1 \leq k \leq n} X_k ~\geq~ \gamma \right) \leq \frac{ \Expect [X_n^{+}]}{\gamma}, ~\forall ~\gamma > 0
\]
where $X_n^{+} = \max\{0, X_n\}$.
\end{theorem}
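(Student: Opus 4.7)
The plan is to decompose the event $A = \{\max_{1 \le k \le n} X_k \ge \gamma\}$ into the disjoint ``first-passage'' pieces
\[
A_k = \{X_1 < \gamma, \ldots, X_{k-1} < \gamma,\ X_k \ge \gamma\}, \qquad k=1,\ldots,n,
\]
so that $A = \bigsqcup_{k=1}^n A_k$. On each $A_k$ we have the pointwise lower bound $X_k \ge \gamma$, which immediately gives
\[
\gamma\, \Prob(A_k) \le \Expect\!\left[X_k\, \indic_{A_k}\right].
\]

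Next I would push $X_k$ up to $X_n$ using the submartingale property. Since $A_k$ is determined by $X_1,\ldots,X_k$, the inequality
\[
\Expect\!\left[X_n \mid X_1,\ldots,X_k\right] \ge X_k
\]
(which is the submartingale version of \eqref{eq:martingale}, specialized to $k_1=1,\ldots,k_n=k$, $k_{n+1}=n$) yields
\[
\Expect\!\left[X_k\, \indic_{A_k}\right] \le \Expect\!\left[X_n\, \indic_{A_k}\right] \le \Expect\!\left[X_n^{+}\, \indic_{A_k}\right],
\]
where the last step just uses $X_n \le X_n^{+}$.

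Combining the two displays gives $\gamma\, \Prob(A_k) \le \Expect[X_n^{+}\, \indic_{A_k}]$, and summing over the disjoint $A_k$ for $k=1,\ldots,n$ produces
\[
\gamma\, \Prob(A) = \sum_{k=1}^n \gamma\, \Prob(A_k) \le \sum_{k=1}^n \Expect\!\left[X_n^{+}\, \indic_{A_k}\right] = \Expect\!\left[X_n^{+}\, \indic_A\right] \le \Expect\!\left[X_n^{+}\right].
\]
Dividing by $\gamma > 0$ gives the claimed bound. The only subtle point is the middle inequality, where one must use that the event $A_k$ is measurable with respect to $(X_1,\ldots,X_k)$ in order to apply the submartingale property under the conditioning; everything else is an elementary disjoint-union calculation. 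Finiteness of $\Expect[X_n]$ (part of the definition) ensures all expectations are well defined.
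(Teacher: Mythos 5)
Your proof is correct, and it is the standard first-passage-time decomposition argument for Doob/Kolmogorov's maximal inequality. The paper itself states this result without proof, citing the Chow–Robbins–Siegmund reference, and your argument is essentially the one found there and in standard martingale texts, with the key steps (measurability of $A_k$ with respect to $\sigma(X_1,\ldots,X_k)$, the submartingale inequality, and replacing $X_n$ by $X_n^+$) all handled correctly.
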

Kolmogorov's inequality can be considered to be a generalization of Markov's inequality, which is given by
\begin{equation} \label{eq:Markov_Ineq}
\Prob \left( X \geq \gamma\right) \leq \frac{ \Expect [X^{+}]}{\gamma}, ~\forall~ \gamma > 0
\end{equation}

As we will see in the following sections, quickest change detection procedures often involve comparing a stochastic sequence to a threshold to make decisions. Martingale inequalities often play a crucial role in the design of the threshold so that the procedure meets a false alarm constraint.
We now state one of the most useful results regarding martingales.

\begin{theorem}[[\cite{will-book-probmart-1991}] (Martingale Convergence Theorem) Let $\{X_n, n=1, 2, \ldots\}$  be a {\em martingale}
(or {\em submartingale} or {\em supermartingale}), such that $\sup_n \Expect[|X_n|] < \infty$.
Then, with probability one, the limit $X_\infty = \lim_{k \to \infty} X_n$ exists and is finite.
\end{theorem}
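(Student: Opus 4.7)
The plan is to prove the convergence theorem via Doob's classical upcrossing inequality. First I would reduce to the submartingale case, noting that $\{-X_n\}$ is a submartingale whenever $\{X_n\}$ is a supermartingale, and that a martingale is both. Then, for fixed rationals $a < b$, define $U_n([a,b])$ to be the number of complete upcrossings of the interval $[a,b]$ made by the partial sequence $X_1, \ldots, X_n$ (each upcrossing being a passage from a value $\le a$ up to a value $\ge b$).

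The central estimate is Doob's upcrossing inequality
\[
(b-a)\, \Expect[U_n([a,b])] \;\le\; \Expect[(X_n - a)^+] - \Expect[(X_1 - a)^+].
\]
To establish this I would introduce a predictable $\{0,1\}$-valued ``gambling'' process $C_k$ that switches on the first time $X_{k-1}$ falls at or below $a$, stays on until the first subsequent time $X_k$ rises to $b$ or above, and then switches off, and so on. Since $\{C_k\}$ is predictable, nonnegative, and bounded, the martingale transform $(C \cdot X)_n = \sum_{k=1}^n C_k (X_k - X_{k-1})$ is a submartingale starting at zero, so $\Expect[(C \cdot X)_n] \ge 0$. A direct path-wise analysis shows $(C \cdot X)_n \ge (b-a)\, U_n([a,b]) - (X_n - a)^-$, and comparing expectations with the identity $(X_n - a)^+ - (X_n - a)^- = X_n - a$ yields the inequality above.

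Given the upcrossing inequality, I would use the hypothesis $\sup_n \Expect[|X_n|] \le M < \infty$ to bound $\Expect[(X_n - a)^+] \le M + |a|$ uniformly in $n$, so $\Expect[U_n([a,b])] \le (M + |a|)/(b-a)$ for all $n$. Since $U_n([a,b])$ is monotone nondecreasing in $n$, monotone convergence gives $\Expect[U_\infty([a,b])] < \infty$, hence $U_\infty([a,b]) < \infty$ almost surely for each fixed pair of rationals. Taking the countable union over all rational pairs $a < b$, the event $\bigl\{\liminf_n X_n < a < b < \limsup_n X_n \text{ for some rationals } a,b \bigr\}$ has probability zero. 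On its complement $\liminf_n X_n = \limsup_n X_n$, so the (extended) limit $X_\infty$ exists in $[-\infty,\infty]$ almost surely. Finally, Fatou's lemma applied to $|X_n|$ gives
\[
\Expect[|X_\infty|] \;=\; \Expect\bigl[\liminf_n |X_n|\bigr] \;\le\; \liminf_n \Expect[|X_n|] \;\le\; M \;<\; \infty,
\]
so $|X_\infty| < \infty$ almost surely, completing the proof.

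The main obstacle is the upcrossing inequality itself: one must set up the predictable betting strategy carefully and verify the path-wise bound $(C \cdot X)_n \ge (b-a)U_n([a,b]) - (X_n - a)^-$ by tracking excursions above and below the levels $a$ and $b$. Once this geometric lemma is in place, the countable-union and Fatou arguments that yield almost-sure convergence and finiteness of the limit are largely mechanical.
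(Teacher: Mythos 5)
The paper states this theorem as a known result cited from Williams' book and gives no proof, so there is no in-paper argument to compare against; your route via Doob's upcrossing inequality, a countable union over rational intervals, and Fatou's lemma is the standard one.

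However, your derivation of the upcrossing inequality contains a sign error that leaves a genuine gap. You reduce to the \emph{submartingale} case and then argue that $(C\cdot X)_n$ is a submartingale null at zero, so $\Expect[(C\cdot X)_n] \ge 0$, and you combine this with the pathwise bound $(C\cdot X)_n \ge (b-a)\,U_n([a,b]) - (X_n-a)^-$. But these are two \emph{lower} bounds on $\Expect[(C\cdot X)_n]$; together they give no upper bound on $\Expect[U_n([a,b])]$, which is what the upcrossing inequality asserts. Nothing you have written forces $(b-a)\Expect[U_n]$ to be small.

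The fix is standard and goes one of two ways. Either reduce instead to the \emph{supermartingale} case (if $\{X_n\}$ is a submartingale then $\{-X_n\}$ is a supermartingale, and upcrossings of $[a,b]$ by $X$ are downcrossings of $[-b,-a]$ by $-X$); then $(C\cdot X)$ is again a supermartingale null at zero, so $\Expect[(C\cdot X)_n] \le 0$, and the pathwise lower bound immediately yields $(b-a)\Expect[U_n] \le \Expect[(X_n-a)^-]$. Or stay with submartingales but invoke the complementary strategy $1-C$: since $(C\cdot X)_n + ((1-C)\cdot X)_n = X_n - X_1$ and $\Expect[((1-C)\cdot X)_n] \ge 0$, one obtains the needed \emph{upper} bound $\Expect[(C\cdot X)_n] \le \Expect[X_n] - \Expect[X_1]$, and applying this to the nonnegative submartingale $(X-a)^+$ (whose upcrossings of $[0,b-a]$ coincide with those of $X$ over $[a,b]$) gives $(b-a)\Expect[U_n] \le \Expect[(X_n-a)^+] - \Expect[(X_1-a)^+]$. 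Once the upcrossing inequality is in place, the remainder of your argument --- uniform $L^1$ bound, monotone convergence, countable union over rational pairs, Fatou --- is correct.
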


\subsection{Stopping Times} \label{sec:stopping_times}
\begin{definition} A {\em stopping time} with respect to the random sequence $\{X_n, n=1, 2, \ldots\}$ is a random
variable $\tau$ with the property that for each $n$, the event $\{ \tau=n\} \in \sigma (X_1, \ldots, X_n)$,
where $\sigma (X_1, \ldots, X_n)$ denotes the sigma-algebra generated by $(X_1, \ldots, X_n)$.
Equivalently, the random variable $\indic_{\{ \tau=n\}}$,
which is the indicator of the event $\{ \tau =n\}$,  is a function of only $X_1, \ldots, X_n$.
\end{definition}

Sometimes the definition of a stopping time $\tau$ also requires that $\tau$ be finite almost surely, i.e., that $\Prob (\tau < \infty) = 1$.

Stopping times are essential to sequential decision making procedures such as quickest change detection procedures, since the times at which decisions are made are stopping times with respect to the observation sequence. There are two main results concerning stopping times that are of interest.
\begin{theorem}[\cite{chow-robb-sieg-book-1971}]\label{thm:DoobOptSamp} (Doob's Optional Stopping Theorem) Let $\{X_n, n=1, 2, \ldots\}$  be a
{\em martingale}, and let $\tau$ be a stopping time with respect to $\{X_n, n=1, 2, \ldots\}$. If the following conditions hold:
\begin{enumerate}
\item $\Prob( \tau < \infty) = 1$.
\item $\Expect[|X_\tau|] < \infty$.
\item $\Expect [ X_n \indic_{\{\tau > n\}} ] \to 0$ as $n \to \infty$.
\end{enumerate}
then
\[
\Expect[X_\tau] = \Expect[X_1].
\]
Similarly, if the above conditions hold, and if $\{X_n, n=1, 2, \ldots\}$  is a {\em submartingale}, then
\[
\Expect[X_\tau] \geq \Expect[X_1],
\]
and if $\{X_n, n=1, 2, \ldots\}$  is a {\em supermartingale}, then
\[
\Expect[X_\tau] \leq \Expect[X_1].
\]
\end{theorem}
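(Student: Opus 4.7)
The plan is to apply the decomposition $X_{\tau \wedge n} = X_\tau \indic_{\{\tau \leq n\}} + X_n \indic_{\{\tau > n\}}$ and take $n \to \infty$, using each of the three hypotheses to control one of the resulting terms. The starting point is the elementary fact that for a martingale $\{X_n\}$ and any (possibly infinite) stopping time $\tau$, the stopped process $\{X_{\tau \wedge n}\}$ is itself a martingale. This follows by writing $X_{\tau \wedge n} = X_1 + \sum_{k=1}^{n-1} \indic_{\{\tau > k\}} (X_{k+1} - X_k)$ and observing that the indicator $\indic_{\{\tau > k\}}$ is a function of $X_1, \ldots, X_k$ (since $\tau$ is a stopping time), so each term is a martingale difference. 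Consequently, $\Expect[X_{\tau \wedge n}] = \Expect[X_1]$ for every finite $n$.

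Next I would write
\[
\Expect[X_1] = \Expect[X_{\tau \wedge n}] = \Expect\!\left[X_\tau \indic_{\{\tau \leq n\}}\right] + \Expect\!\left[X_n \indic_{\{\tau > n\}}\right],
\]
and then pass to the limit $n \to \infty$ term-by-term. The second term vanishes directly by hypothesis (3). For the first term, hypothesis (1) gives $\indic_{\{\tau \leq n\}} \to 1$ almost surely, so $X_\tau \indic_{\{\tau \leq n\}} \to X_\tau$ a.s., while $|X_\tau \indic_{\{\tau \leq n\}}| \leq |X_\tau|$, and hypothesis (2) furnishes the integrable dominating function. Dominated convergence therefore yields $\Expect[X_\tau \indic_{\{\tau \leq n\}}] \to \Expect[X_\tau]$. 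Combining these limits gives $\Expect[X_\tau] = \Expect[X_1]$.

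For the submartingale case, the stopped process $\{X_{\tau \wedge n}\}$ is still a submartingale (the same telescoping argument shows $\Expect[X_{\tau\wedge n}] \geq \Expect[X_1]$), and the same decomposition and limiting arguments yield $\Expect[X_\tau] \geq \Expect[X_1]$. The supermartingale case is symmetric. The one place I would need to be slightly careful is the submartingale/supermartingale version of the stopped-process identity, since for these we only get inequalities rather than equalities at each finite $n$; but the direction of the inequality is preserved under the limit, so the conclusion is immediate.

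The only real obstacle is the first term $\Expect[X_\tau \indic_{\{\tau \leq n\}}]$: without hypothesis (2) the pointwise limit $X_\tau$ need not be integrable, and without hypothesis (1) the indicator need not converge to $1$. Once these are in hand, DCT does all the work and hypothesis (3) simply erases the remainder. Thus each of the three hypotheses plays exactly one role, and no additional machinery beyond the optional-stopping-for-bounded-times fact and dominated convergence is needed.
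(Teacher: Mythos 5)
The paper states Doob's Optional Stopping Theorem as a background fact, citing \cite{chow-robb-sieg-book-1971}, and does not supply a proof; so there is no in-paper argument against which to compare your proposal. Judged on its own, your proof is correct and is the standard textbook argument: the telescoping identity $X_{\tau\wedge n}=X_1+\sum_{k=1}^{n-1}\indic_{\{\tau>k\}}(X_{k+1}-X_k)$ establishes $\Expect[X_{\tau\wedge n}]=\Expect[X_1]$ (with the appropriate inequality in the sub/supermartingale cases, since $\{\tau>k\}=\{\tau\le k\}^c$ is $\sigma(X_1,\ldots,X_k)$-measurable), the decomposition $X_{\tau\wedge n}=X_\tau\indic_{\{\tau\le n\}}+X_n\indic_{\{\tau>n\}}$ splits the expectation into two pieces, hypothesis (3) kills the second piece, and hypotheses (1) and (2) together permit dominated convergence on the first piece (almost-sure convergence of the integrand to $X_\tau$ from (1), the integrable dominating function $|X_\tau|$ from (2)). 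Each hypothesis is used exactly once and the inequality direction survives the passage to the limit in the one-sided cases, so the argument is complete.
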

\begin{theorem}[\cite{sieg-seq-anal-book-1985}] \label{th:Wald_Identity} (Wald's Identity) Let $\{X_n, n=1, 2, \ldots\}$ be a sequence of independent and
identically distributed (i.i.d.) random variables, and let $\tau$ be a stopping time with respect to
$\{X_n, n=1, 2, \ldots\}$. Furthermore, define the sum at time $n$ as
\[
S_n = \sum_{k=1}^n X_k
\]
Then, if $\Expect[|X_1|] < \infty$ and $\Expect [\tau] < \infty$,
\[
\Expect[ S_\tau] = \Expect[X_1] \, \Expect[\tau]
\]
\end{theorem}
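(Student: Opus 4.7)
The plan is to apply Doob's Optional Stopping Theorem (Theorem~\ref{thm:DoobOptSamp}) to a centered version of the partial sums. Set $\mu = \Expect[X_1]$ and define $M_n = S_n - n\mu = \sum_{k=1}^n (X_k - \mu)$. A direct check using the i.i.d.\ property of $\{X_n\}$ and $\Expect[X_k - \mu]=0$ shows that $\{M_n\}$ is a martingale with respect to $\{X_n\}$ and $\Expect[M_1]=0$. If the hypotheses of Doob's theorem hold for $\{M_n\}$ and $\tau$, then $\Expect[M_\tau]=\Expect[M_1]=0$, which rearranges to $\Expect[S_\tau]=\mu\,\Expect[\tau]$. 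The proof therefore reduces to verifying the three conditions of Theorem~\ref{thm:DoobOptSamp} for $\{M_n\}$ and $\tau$.

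Condition~(1), $\Prob(\tau<\infty)=1$, is immediate from $\Expect[\tau]<\infty$. For conditions~(2) and~(3), I would first establish the auxiliary identity
\begin{equation*}
\Expect\!\left[\sum_{k=1}^{\tau}|X_k-\mu|\right] = \sum_{k=1}^{\infty}\Expect\!\left[|X_k-\mu|\,\indic_{\{\tau\ge k\}}\right] = \Expect\!\left[|X_1-\mu|\right]\sum_{k=1}^{\infty}\Prob(\tau\ge k) = \Expect\!\left[|X_1-\mu|\right]\,\Expect[\tau],
\end{equation*}
which is finite by hypothesis. The first equality uses Tonelli's theorem; the second uses that $\{\tau\ge k\}=\{\tau\le k-1\}^c \in \sigma(X_1,\ldots,X_{k-1})$ is independent of $X_k$ (this is where the stopping-time property of $\tau$ enters); the third is the standard tail-sum formula $\sum_k \Prob(\tau\ge k) = \Expect[\tau]$. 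Condition~(2) then follows from the pointwise bound $|M_\tau|\le \sum_{k=1}^{\tau}|X_k-\mu|$. For condition~(3), I would combine the pointwise bound $|M_n|\,\indic_{\{\tau>n\}}\le\brc{\sum_{k=1}^{\tau}|X_k-\mu|}\indic_{\{\tau>n\}}$ with $\indic_{\{\tau>n\}}\to 0$ almost surely (from condition~(1)) and the integrability of the dominating variable to invoke dominated convergence.

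The main obstacle is the first equality of the displayed identity, i.e., justifying the interchange of summation and expectation in a form that simultaneously decouples the indicator $\indic_{\{\tau\ge k\}}$ from $X_k$. Everything else is routine: once that independence and Tonelli's theorem reduce the problem to $\sum_k \Prob(\tau\ge k) = \Expect[\tau]$, conditions~(2) and~(3) of Doob's theorem follow by standard dominated-convergence arguments, and the identity $\Expect[S_\tau]=\mu\,\Expect[\tau]$ is the direct output of Theorem~\ref{thm:DoobOptSamp} applied to $\{M_n\}$ and $\tau$.
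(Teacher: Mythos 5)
The paper does not prove Wald's Identity; it is stated as a preliminary with a citation to Siegmund's book, so there is no in-paper argument to compare against. Judged on its own, your proof is correct and is a standard martingale argument: you reduce to Doob's Optional Stopping (Theorem~\ref{thm:DoobOptSamp}) applied to the centered walk $M_n = S_n - n\mu$, establish the absolute-convergence estimate $\Expect\bigl[\sum_{k=1}^{\tau}|X_k-\mu|\bigr] = \Expect[|X_1-\mu|]\,\Expect[\tau] < \infty$ via Tonelli, the independence of $X_k$ from $\{\tau\geq k\}\in\sigma(X_1,\ldots,X_{k-1})$, and the tail-sum formula, and then obtain conditions~(2) and~(3) of Doob's theorem by domination (the bound $|M_n|\indic_{\{\tau>n\}}\leq\sum_{k=1}^{\tau}|X_k-\mu|$ holds because on $\{\tau>n\}$ the sum to $\tau$ includes the sum to $n$). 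Two small remarks. First, your commentary flags the ``first equality'' as the main obstacle, but Tonelli is automatic for a nonnegative summand; the substantive step is the second equality, where the stopping-time property decouples the indicator from $X_k$ --- and that step you do carry out correctly. Second, the detour through Doob's theorem is not forced: once you have the absolute-convergence estimate, you can apply Fubini directly to $\Expect\bigl[\sum_{k\geq 1}(X_k-\mu)\indic_{\{\tau\geq k\}}\bigr]$, which gives $\Expect[S_\tau-\mu\tau]=0$ immediately; the two routes package the same content, but the direct Fubini argument avoids having to verify the convergence hypothesis of optional stopping separately.
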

Like martingale inequalities, the optional stopping theorem is useful in the false alarm analysis of quickest change detection procedures. Both the optional stopping theorem and Wald's identity also play a key role in the delay analysis of quickest change detection procedures.
%
%
\subsection{Renewal and Nonlinear Renewal Theory}
\label{sec:renewalTheory}
As we will see in subsequent sections, quickest change detection procedures often involve comparing a stochastic sequence to a threshold to make decisions. Often the stochastic sequence used in decision-making can be expressed as a sum of a random walk and possibly a \textit{slowly changing} perturbation. To obtain accurate estimates of the performance of the detection procedure, one needs to obtain an accurate estimate of the distribution of the overshoot of the stochastic sequence when it crosses the decision threshold.
Under suitable assumptions, and when the decision threshold is large enough, the overshoot distribution of the stochastic sequence can be approximated by the overshoot distribution of the random walk. It is then of interest to have asymptotic estimates of the overshoot distribution, when a random walk crosses a large boundary.

Consider a sequence of i.i.d. random variables $\{Y_n\}$ (with $Y$ denoting a generic random variable in the sequence) and let
\[S_n = \sum_{k=1}^n Y_k, \]
and
\[\tau = \inf\{n\geq 1: S_n > b\}.\]
The quantity of interest is the distribution of the overshoot $S_\tau -b$.
If $\{Y_n\}$ are i.i.d. \textit{positive} random variables with cumulative distribution function (c.d.f.) $F(y)$,
then  $\{Y_n\}$ can be viewed as inter-arrival times of buses at a stop. The overshoot is then the time to next bus when an observer is waiting for a bus at time $b$.
The distribution of the overshoot, and hence also of the time to next bus, as $b\to \infty$ is a well known result
in renewal theory.
\begin{theorem}[\cite{sieg-seq-anal-book-1985}]  \label{thm:ren_th}
If $\{Y_n\}$ are nonarithmetic\footnote{A random variable is arithmetic if all of it probability mass is on a lattice. Otherwise it is said to non-arithmetic.} random variables, and $\Prob(Y>0)=1$, then
\[\lim_{b\to \infty} \Prob(S_\tau-b > y) = (\Expect[Y])^{-1} \int_y^\infty \Prob \{ Y > x\} dx.\]
Further, if $\Expect[Y^2]<\infty$, then
\[\lim_{b\to \infty} \Expect(S_\tau-b) = \frac{\Expect[Y^2]}{2\Expect[Y]}.\]
\end{theorem}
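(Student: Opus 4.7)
My plan is to reduce both statements to the key renewal theorem (Blackwell/Smith), which is the standard route when $Y$ is nonarithmetic. First, I would set up a renewal equation for the tail of the overshoot. Fix $y \ge 0$ and let
\[
g_b(y) \;=\; \Prob(S_\tau - b > y).
\]
Condition on the first step $Y_1$. If $Y_1 > b$, then $\tau = 1$, $S_\tau = Y_1$, and $\{S_\tau - b > y\} = \{Y_1 > b+y\}$; if $Y_1 = x \le b$, the process restarts from $x$ by the i.i.d.\ property, and the problem becomes the overshoot over level $b-x$. This gives
\[
g_b(y) \;=\; \Prob(Y > b+y) \;+\; \int_0^b g_{b-x}(y)\, dF(x),
\]
where $F$ is the c.d.f.\ of $Y$. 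This is a standard renewal equation in $b$ with forcing function $z_y(b) \defeq \Prob(Y > b+y)$, and its solution in terms of the renewal measure $U(dx) = \sum_{n \ge 0} F^{*n}(dx)$ is $g_b(y) = \int_{[0,b]} z_y(b-x)\, dU(x)$.

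Next I would invoke the key renewal theorem: since $Y$ is nonarithmetic and positive with $\mu \defeq \Expect[Y] \in (0,\infty]$, and since $z_y(\cdot)$ is nonnegative, nonincreasing, and Riemann-integrable on $[0,\infty)$ (its integral equals $\int_y^\infty \Prob(Y > x)\, dx \le \Expect[Y]$, and it is dominated by the directly-Riemann-integrable upper envelope $\Prob(Y > \lfloor b \rfloor + y)$), the hypotheses for direct Riemann integrability hold. Thus
\[
\lim_{b \to \infty} g_b(y) \;=\; \frac{1}{\mu} \int_0^\infty z_y(u)\, du \;=\; \frac{1}{\mu}\int_0^\infty \Prob(Y > u+y)\, du \;=\; \frac{1}{\mu}\int_y^\infty \Prob(Y > x)\, dx,
\]
which is the first claim. (If $\mu = \infty$, the right-hand side is interpreted as $0$, which also follows from the key renewal theorem.)

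For the second claim, I would use the tail-integral representation
\[
\Expect[S_\tau - b] \;=\; \int_0^\infty g_b(y)\, dy
\]
and interchange $\lim_{b\to\infty}$ with $\int_0^\infty$. Assuming this interchange and Fubini,
\[
\lim_{b\to\infty}\Expect[S_\tau - b] \;=\; \frac{1}{\mu}\int_0^\infty\!\!\int_y^\infty \Prob(Y>x)\,dx\,dy \;=\; \frac{1}{\mu}\int_0^\infty x\,\Prob(Y>x)\,dx \;=\; \frac{\Expect[Y^2]}{2\,\Expect[Y]},
\]
using $\Expect[Y^2] = 2\int_0^\infty x\Prob(Y>x)\,dx$ for $Y \ge 0$. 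The main obstacle, and the step where the assumption $\Expect[Y^2] < \infty$ is consumed, is justifying this interchange: pointwise convergence alone is not enough. I would establish uniform integrability of $\{S_\tau - b : b \ge 0\}$ by producing a $b$-uniform bound on $\Expect[(S_\tau - b)^2]$. A clean way is the stochastic-domination bound $S_\tau - b \le Y_\tau^\ast$, where $Y_\tau^\ast$ is the overshooting summand, and to bound the tail $\Prob(S_\tau - b > y) \le C\,\Prob(Y > y)$ uniformly in $b$ via the renewal-measure representation of $g_b(y)$ together with elementary renewal bounds $U([0,b]) \le C(1+b)$. Combined with $\Expect[Y^2] < \infty$ this yields $\sup_b \Expect[(S_\tau - b)^2] < \infty$, hence uniform integrability, validating the exchange.
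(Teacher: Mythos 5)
The paper does not prove this theorem; it simply cites \cite{sieg-seq-anal-book-1985}, so there is no internal proof to compare against. Your treatment of the first limit is correct: the renewal equation for $g_b(y)$, its solution in terms of the renewal measure $U$, and the direct-Riemann-integrability of $z_y(\cdot)=\Prob(Y>\cdot\,+y)$ (nonnegative, monotone nonincreasing, Lebesgue-integrable with $\int_0^\infty z_y \le \Expect[Y]$) are all sound, and the key renewal theorem delivers exactly the stated limit.

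The second part, however, has a genuine gap. Both of the uniform bounds you invoke --- $\Prob(S_\tau-b>y)\le C\,\Prob(Y>y)$ and $\sup_b\Expect[(S_\tau-b)^2]<\infty$ --- are false under only $\Expect[Y^2]<\infty$. Indeed, by the first part of the theorem the limiting overshoot has tail $\mu^{-1}\int_y^\infty\Prob(Y>x)\,dx$; if $\Prob(Y>y)\asymp y^{-\alpha}$ with $2<\alpha\le 3$ (so $\Expect[Y^2]<\infty$ but $\Expect[Y^3]=\infty$), this tail is of order $y^{-(\alpha-1)}$, strictly heavier than $\Prob(Y>y)$, and the limiting overshoot has infinite second moment $\Expect[Y^3]/(3\mu)$; by Fatou the pre-limit second moments then cannot be uniformly bounded. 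The domination $S_\tau-b\le Y_\tau$ does not rescue this, because $Y_\tau$ is size-biased and its second moment is governed by $\Expect[Y^3]$, not $\Expect[Y^2]$; and $U([0,b])\le C(1+b)$ is the wrong renewal estimate here, since it grows in $b$. The clean repair sidesteps the interchange entirely: apply the key renewal theorem directly to the mean. By Tonelli and the renewal representation,
\[
\Expect[S_\tau-b]=\int_0^\infty g_b(y)\,dy=\int_{[0,b]} h(b-x)\,dU(x),\qquad h(u)\defeq\int_u^\infty\Prob(Y>s)\,ds,
\]
and $h$ is nonnegative, nonincreasing, with $\int_0^\infty h(u)\,du=\int_0^\infty s\,\Prob(Y>s)\,ds=\tfrac12\Expect[Y^2]<\infty$, hence directly Riemann-integrable precisely under your hypothesis $\Expect[Y^2]<\infty$. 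The key renewal theorem then gives $\Expect[S_\tau-b]\to\mu^{-1}\int_0^\infty h=\Expect[Y^2]/(2\mu)$ with no uniform-integrability step. (If you insist on the UI route, the usable renewal estimate is $\sup_t U([t,t+1])<\infty$, which bounds $\int_M^\infty g_b(y)\,dy$ uniformly in $b$ by a quantity vanishing as $M\to\infty$; but the direct application above is shorter.)
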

\vspace{0.5cm}

When the $\{Y_n\}$ are i.i.d. but not necessarily non-negative, and $\Expect[Y]>0$, then the following
concept of {\em ladder variables} can be used. Let
\[\tau_{+} = \inf\{n\geq 1: S_n > 0\}.\]
Note that if $\tau_{+}< \infty$, then $S_{\tau_{+}}$ is a positive random variable. Also, if
\[
\tau = \inf\{n\geq 1: S_n > b\} < \infty
\]
then the distribution of $S_\tau-b$ is the same as the overshoot distribution for
the sum of a sequence of i.i.d. positive random variables (each with distribution equal to that of $S_{\tau_{+}}$) crossing the boundary $b$. Therefore, by applying  Theorem~\ref{thm:ren_th}, we have the following result.
\begin{theorem}[\cite{sieg-seq-anal-book-1985}] \label{thm:ren_th2}
If $\{Y_n\}$ are nonarithmetic, then
\[\lim_{b\to \infty} \Prob(S_\tau-b > y) = (\Expect[S_{\tau_{+}}])^{-1} \int_y^\infty \Prob(S_{\tau_{+}}>x)\, dx.\]
Further, if $\Expect[Y^2]<\infty$, then
\[\lim_{b\to \infty} \Expect(S_\tau-b) = \frac{\Expect[S_{\tau_{+}}^2]}{2\Expect[S_{\tau_{+}}]}.\]
\end{theorem}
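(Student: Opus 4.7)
The plan is to reduce Theorem~\ref{thm:ren_th2} to the positive-increment case already handled by Theorem~\ref{thm:ren_th}. The idea, anticipated in the text just before the statement, is to view $\{S_n\}$ only at the successive instants when it attains a strict new maximum; between two such instants $S_n$ cannot cross $b$ from below, so the overshoot $S_\tau - b$ is exactly the overshoot of a random walk with strictly positive i.i.d.\ increments.

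Concretely, I would define the strict ascending ladder epochs recursively by $\tau_+^{(0)} = 0$ and
\[
\tau_+^{(k)} \;=\; \inf\bigl\{\, n > \tau_+^{(k-1)} : S_n > S_{\tau_+^{(k-1)}}\,\bigr\}, \qquad k \ge 1,
\]
with $\tau_+^{(1)} = \tau_+$. The theorem is of interest only when $\Expect[Y] > 0$ (otherwise $\tau$ need not be a.s.\ finite), and then the strong law of large numbers gives $S_n \to \infty$ a.s., so every $\tau_+^{(k)}$ is finite almost surely. The strong Markov property applied at each $\tau_+^{(k-1)}$ shows that the increments $Z_k := S_{\tau_+^{(k)}} - S_{\tau_+^{(k-1)}}$ form an i.i.d.\ sequence of strictly positive random variables each distributed as $S_{\tau_+}$, so $H_k := Z_1 + \cdots + Z_k$ is a positive-increment random walk.

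Next comes the key identification: because $S_n \le S_{\tau_+^{(k-1)}} = H_{k-1}$ for $\tau_+^{(k-1)} \le n < \tau_+^{(k)}$, the walk $\{S_n\}$ can only attain a new running maximum, and hence can only cross level $b$, at a ladder epoch. Therefore, setting $N := \inf\{k \ge 1 : H_k > b\}$, one has $\tau = \tau_+^{(N)}$ and
\[
S_\tau - b \;=\; H_N - b.
\]
Theorem~\ref{thm:ren_th} applied to $\{H_k\}$ with generic step $S_{\tau_+}$ then yields both displays, provided its hypotheses are met.

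The main obstacle is verifying those hypotheses for the ladder walk: one needs $S_{\tau_+}$ to be non-arithmetic whenever $Y$ is (for both limits), and $\Expect[S_{\tau_+}^2] < \infty$ whenever $\Expect[Y^2] < \infty$ (for the second limit only). For the arithmetic property, if $S_{\tau_+}$ sat on a lattice $h\mathbb{Z}$, then conditioning on $\{\tau_+ = 1\}$ (which has positive probability since $\Prob(Y>0)>0$ under positive drift) would force the positive part of $Y$ to sit on $h\mathbb{Z}$, and propagation through the positive-drift structure contradicts non-arithmeticity of $Y$. The second-moment claim rests on $\Expect[\tau_+] < \infty$ (a standard consequence of the positive drift) combined with a Wald-type identity. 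Both facts are classical and developed in detail in \cite{sieg-seq-anal-book-1985} and \cite{wood-nonlin-ren-th-book-1982}, so I would cite them rather than reprove them; once they are in hand, the two conclusions of Theorem~\ref{thm:ren_th2} follow immediately from Theorem~\ref{thm:ren_th}.
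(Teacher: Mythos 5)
Your proposal follows the same ladder-variable reduction that the paper uses: the paragraph immediately preceding the statement of Theorem~\ref{thm:ren_th2} observes precisely that the overshoot $S_\tau - b$ coincides with the overshoot of an i.i.d.\ positive-increment walk with steps distributed as $S_{\tau_+}$, and then invokes Theorem~\ref{thm:ren_th}. Your version simply makes explicit what the paper leaves implicit (the recursive ladder epoch construction, the identification $\tau = \tau_+^{(N)}$, and the verification that $S_{\tau_+}$ inherits nonarithmeticity and finite second moment), and is correct.
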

%
Techniques for computing the required quantities involving the distribution of the ladder height $S_{\tau_{+}}$ in Theorem~\ref{thm:ren_th2} can be found in \cite{sieg-seq-anal-book-1985}.

As mentioned earlier, often the stochastic sequence considered in quickest change detection problem can be
written as a sum of a random walk and a sequence of small perturbations.
Let
\[Z_n = \sum_{k=1}^{n} Y_k + \eta_n,\]
and
\[\tau = \inf\{n\geq 1: Z_n > b\}.\]
Then,
\[Z_\tau = \sum_{k=1}^{\tau} Y_k + \eta_\tau.\]
Therefore, assuming that $\Expect[\tau] < \infty$,  Wald's Identity (see Theorem~\ref{th:Wald_Identity}) implies that
\begin{eqnarray}
\Expect[Z_\tau] &=& \Expect\left[\sum_{k=1}^{\tau} Y_k \right] + \Expect[\eta_\tau].\\
                &=& \Expect[\tau] \Expect[Y] + \Expect[\eta_\tau].
\end{eqnarray}
Thus,
\begin{eqnarray*}
\Expect[\tau]  &=& \frac{\Expect[Z_\tau] -\Expect[\eta_\tau]}{\Expect[Y]} \\
               &=& \frac{b + \Expect[Z_\tau-b] - \Expect[\eta_\tau]}{\Expect[Y]}.
\end{eqnarray*}
If $\Expect[\eta_\tau]$ and $\Expect[Z_\tau-b]$ are finite then it is easy to see that
\[
\Expect[\tau] \sim \frac{b}{\Expect[Y]}  \text{ as } b\to \infty
\]
where $\sim$ is as defined in Table~\ref{Tab:Glossary}.

But if we can characterize the overshoot distribution of $\{Z_n\}$ when it crosses a large threshold then we can obtain
better approximations for $\Expect[\tau]$.
Nonlinear renewal theory allows us to obtain distribution of the overshoot when $\{\eta_n\}$ satisfies some properties.
\begin{definition}
$\{\eta_n\}$ is  a {\em slowly changing} sequence if
\begin{equation}
\label{eq:SlowlyCngCond1}
n^{-1} \max\{ |\eta_1|, \ldots, |\eta_n|\} \xrightarrow[i.p.]{n\to\infty} 0,
\end{equation}
and for every $\epsilon > 0$, there exists $n^*$ and $\delta>0$ such that for all $n\geq n^*$
\begin{equation}
\label{eq:SlowlyCngCond2}
\Prob \left(\max_{1\leq k\leq n\delta} |\eta_{n+k} - \eta_{n}| > \epsilon\right) < \epsilon.
\end{equation}
\end{definition}
If indeed $\{\eta_n\}$ is a slowly changing sequence, then the distribution of $Z_{\tau}- b$, as $b\to \infty$,
is equal to the asymptotic distribution of the overshoot
when the random walk $S_n = \sum_{k=1}^{n} Y_k$ crosses a large positive boundary, as stated in the following result.

\begin{theorem}[\cite{sieg-seq-anal-book-1985}]
\label{thm:NonlinearRenewalTheort}
If $\{Y_n\}$ are nonarithmetic and $\{\eta_n\}$ is a slowly changing sequence then
\[\lim_{b\to \infty} \Prob(Z_\tau-b \leq x) = \lim_{b\to \infty} \Prob(S_\tau-b \leq x).\]
Further, if $\mathrm{Var}(Y)<\infty$ and certain additional conditions ((9.22)-(9.27) in \cite{sieg-seq-anal-book-1985})
are satisfied, then
\begin{eqnarray*}
\Expect[\tau]  = \frac{b + \zeta - \Expect[\eta]}{\Expect[Y]} + o(1)~\text{as $b \to \infty$}
\end{eqnarray*}
where $\zeta=\frac{\Expect[S_{\tau_{+}}^2]}{2\Expect[S_{\tau_{+}}]}$, and $\eta$ is the limit of $\{\eta_n\}$ in
distribution.
\end{theorem}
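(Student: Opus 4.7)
The plan is to exploit the slowly changing property of $\{\eta_n\}$ to ``freeze'' the perturbation near the crossing time, reducing the overshoot analysis of $Z_\tau$ to that of the pure random walk $S_n$, where Theorem~\ref{thm:ren_th2} applies.

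First I would establish that $\tau \to \infty$ almost surely and that $\tau/b \to 1/\Expect[Y]$ as $b \to \infty$. This follows by combining the strong law of large numbers $S_n/n \to \Expect[Y] > 0$ with condition \eqref{eq:SlowlyCngCond1}, which forces $\eta_n/n \to 0$ in probability; hence $Z_n/n \to \Expect[Y]$, so $\tau$ is finite a.s.\ and grows linearly in $b$. This reduction will later allow me to apply asymptotic renewal-theoretic statements that are uniform in the index.

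The main step is the overshoot result. Writing $Z_\tau - b = S_\tau - (b - \eta_\tau)$, I would use \eqref{eq:SlowlyCngCond2} together with $\tau \to \infty$ to show that for every $\vae > 0$,
\[
\Prob\left(\max_{|k| \le \delta \tau} |\eta_{\tau+k} - \eta_\tau| > \vae \right) \to 0 \quad \text{as } b \to \infty.
\]
On this high-probability event, if we freeze the perturbation at $\eta_\tau$ and set $\tau^{*} = \inf\{n : S_n > b - \eta_\tau\}$, then $\tau$ and $\tau^{*}$ coincide and $|(Z_\tau - b) - (S_{\tau^{*}} - (b - \eta_\tau))| \le \vae$. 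By an Anscombe-type argument, the overshoot of the random walk $S_n$ crossing a threshold that tends to $\infty$ is asymptotically independent of the (possibly random) threshold level, so Theorem~\ref{thm:ren_th2} yields the ladder-height overshoot limit regardless of the conditioning on $\eta_\tau$. Letting $\vae \to 0$ then gives the first claim.

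For the second assertion I would start from the identity derived just above the theorem,
\[
\Expect[\tau] = \frac{b + \Expect[Z_\tau - b] - \Expect[\eta_\tau]}{\Expect[Y]}.
\]
The additional hypotheses (9.22)--(9.27) from \cite{sieg-seq-anal-book-1985} provide uniform integrability of the families $\{Z_\tau - b\}_{b}$ and $\{\eta_\tau\}_{b}$. Combined with the convergence in distribution just established and with $\eta_n \Rightarrow \eta$, uniform integrability upgrades these to $\Expect[Z_\tau - b] \to \zeta$ (via Theorem~\ref{thm:ren_th2} applied to ladder heights) and $\Expect[\eta_\tau] \to \Expect[\eta]$. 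Substituting gives the $o(1)$ expansion. The hardest part will be making the freezing argument rigorous: transferring the slowly-changing property from deterministic indices to the random index $\tau$ (an Anscombe-type uniformity), and showing that the random-walk overshoot across the randomized threshold $b - \eta_\tau$ is asymptotically independent of $\eta_\tau$. The uniform-integrability upgrades needed for the mean expansion are precisely what the technical conditions (9.22)--(9.27) are designed to supply.
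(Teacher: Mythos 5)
The paper does not prove this theorem; it is stated as a cited result from Siegmund's \emph{Sequential Analysis} (reference \cite{sieg-seq-anal-book-1985}, Theorem 9.12 and its corollaries), so there is no ``paper's own proof'' to compare against. Evaluated on its merits, your sketch correctly identifies the standard strategy behind the nonlinear renewal theorem: establish $\tau \sim b/\Expect[Y]$, use the slowly-changing property to argue the perturbation is locally approximately constant near the crossing, reduce the overshoot of $Z_n$ to a random-walk overshoot over a (nearly deterministic) shifted boundary, and upgrade distributional convergence to moment convergence via the uniform integrability guaranteed by conditions (9.22)--(9.27).

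There is, however, a genuine gap in the freezing step as you wrote it. You define $\tau^{*} = \inf\{n : S_n > b - \eta_\tau\}$ and assert that on the high-probability event $\tau$ and $\tau^{*}$ coincide. This is circular: $\eta_\tau$ is a functional of the path up to the random time $\tau$, so $\tau^{*}$ is not a stopping time of $\{S_n\}$, and one cannot apply Theorem~\ref{thm:ren_th2} (or any renewal-theoretic overshoot result) to it directly. Moreover the claimed coincidence is false in general --- $\{Z_n > b\}$ and $\{S_n > b - \eta_\tau\}$ are the same set only at $n = \tau$, not for $n < \tau$, so the first-passage times of the two processes need not agree. The standard way to make this rigorous (Lai--Siegmund, Woodroofe) is to freeze the perturbation at an \emph{earlier}, predetermined index $m = m(b) \approx (1-\delta) b / \Expect[Y]$: condition on $\mathcal{F}_m$, use \eqref{eq:SlowlyCngCond2} to show $\sup_{m \le n \le (1+\delta)m} |\eta_n - \eta_m| < \vae$ with high probability, and then observe that from time $m$ onward $Z_n$ is sandwiched between two genuine random walks crossing boundaries $b - \eta_m \pm \vae$, whose overshoots converge to the same ladder-height limit uniformly over the (large) boundary level. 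That uniformity over boundary levels is the content of what you loosely call an ``Anscombe-type argument''; naming it does not resolve the measurability problem created by freezing at $\tau$ rather than at $m$. The mean expansion portion of your argument is fine once the overshoot step is repaired.
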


\section{Bayesian quickest change detection}
\label{sec:BayesCent}
As mentioned earlier we will primarily focus on the case where the observation process $\{X_n\}$
is a discrete time stochastic process, with $X_n$ taking real values,
 whose distribution changes at some unknown change point.
In the Bayesian setting it is assumed that the change point
is a random variable $\Gamma$ taking values on the non-negative integers,
with $\pi_n = \Prob\{\Gamma =n\}$.
Let $\Prob_{n}$ (correspondingly $\Expect_{n}$) be the probability measure (correspondingly expectation) when the change
occurs at time $\tau=n$. Then,
$\Prob_\infty$ and $\Expect_\infty$ stand for the probability measure and expectation
when $\tau=\infty$, i.e., the change does not occur.
At each time step a decision is made based on all the information available as to whether to stop
and declare a change or to continue taking observations.
Thus the time at which the change is declared is a stopping time $\tau$ on
the sequence $\{X_n\}$ (see Section~\ref{sec:stopping_times}).
Define the average detection delay (ADD) and the probability of false alarm (PFA),  as
\begin{eqnarray}
\ADD(\tau) &=& \Expect \left[(\tau - \Gamma)^+\right] = \sum_{n=0}^\infty \pi_n \Expect_n   \left[(\tau - \Gamma)^+\right] \label{eq:ADD_def}\\
\PFA(\tau) &=& \Prob (\tau<\Gamma) = \sum_{n=0}^\infty \pi_n \Prob_n (\tau<\Gamma)
\end{eqnarray}
Then, the Bayesian
quickest change detection problem is to minimize $\ADD$  subject to a constraint on $\PFA$.  Define the class of stopping times that satisfy a constraint $\alpha$ on $\PFA$:
\begin{equation} \label{eq:Cclassdef}
\Cc_\alpha=\{\tau: \PFA(\tau) \leq \alpha\}.
\end{equation}
Then the Bayesian quickest change detection problem as formulated by Shiryaev is as follows.
%
\begin{equation}
\label{eq:BayesConstProb}
\text{\emph{Shiryaev's Problem:} For a given $\alpha$, find a stopping time $\tau \in \Cc_\alpha$  to minimize $\ADD (\tau)$.}
\end{equation}
Under an i.i.d. model for the observations, and a geometric model for the change point $\Gamma$, \eqref{eq:BayesConstProb} can be solved exactly by relating it to a stochastic control problem  \cite{shir-siamtpa-1963, shir-opt-stop-book-1978}.
We discuss this  i.i.d. model  in detail in Section~\ref{sec:IIDBayesianSetting}. When the model is not i.i.d., it is difficult to find algorithms that are exactly optimal. However, asymptotically optimal solutions, as $\alpha \to 0$,
are available in a very general non-i.i.d. setting \cite{tart-veer-siamtpa-2005}, as discussed in Section~\ref{sec:NONIIDBayes}.

\subsection{The Bayesian I.I.D. Setting}
\label{sec:IIDBayesianSetting}

Here it is assumed that conditioned on the change point $\Gamma$, the random variables $\{X_n\}$ are i.i.d.\ with probability density
function (p.d.f.) $f_0$ before the change point, and i.i.d.\ with p.d.f. $f_1$ after the change point.
The change point $\Gamma$ is modeled as {\em geometric} with parameter $\rho$, i.e., for $0 < \rho < 1$
\begin{equation} \label{eq:geom_prior}
\pi_n  = \Prob \{ \Gamma= n \} =  \rho (1-\rho)^{n-1} \; \indic_{\{n\geq 1\}}, \quad \pi_0 =0
\end{equation}
where $\indic$ is the indicator function. The goal is to choose a stopping time $\tau$ on the
observation sequence $\{X_n\}$ to solve \eqref{eq:BayesConstProb}.

A solution to  \eqref{eq:BayesConstProb} is provided in Theorem \ref{thm:ShiryaevOpt} below.
Let $X_1^n = (X_1, \ldots, X_n)$ denote the observations up to time $n$. Also let
\begin{equation} \label{eq:pk}
p_n = \Prob ( \Gamma \leq n \; | \; X_1^n )
\end{equation}
be the \textit{a posteriori} probability at time $n$ that the change has taken place given the observation up to time $n$.
Using Bayes' rule, $p_n$ can be shown to satisfy the recursion
\begin{equation} \label{eq:pkrec}
p_{n+1} = \Phi(X_{n+1}, p_n)
\end{equation}
where
\begin{equation} \label{eq:recursionTake}
\Phi(X_{n+1},p_n)  =  \frac{\tilde{p}_n   L(X_{n+1})}{ \tilde{p}_n  L(X_{n+1}) + (1-\tilde{p}_n )}
\end{equation}
$\tilde{p}_n = p_n + (1-p_n) \rho$,
$L(X_{n+1}) = f_1(X_{n+1})/f_0(X_{n+1})$ is the likelihood ratio,
and $p_0=0$.

\begin{definition} \label{def:K-L-Divergence} (Kullback-Leibler (K-L) Divergence). The K-L divergence between two p.d.f.'s $f_1$ and $f_0$ is defined as
\[
D (f_1\| f_0) = \int  f_1 (x) \;   \log \frac{f_1 (x)}{f_0(x)} \; dx.
\]
\end{definition}

Note that $D (f_1\| f_0) \geq 0$ with equality iff $f_1 = f_0$ almost surely. We will assume that
\[
0 < D(f_1\| f_0) < \infty.
\]
\begin{theorem}[\cite{shir-siamtpa-1963,shir-opt-stop-book-1978}]\label{thm:ShiryaevOpt}
The optimal solution to Bayesian optimization problem of \eqref{eq:BayesConstProb} is the \emph{Shiryaev algorithm/test}, which is described by the stopping time:
\begin{equation}
\label{eq:OptimalAlgo}
\taus = \inf\left\{ n\geq 1: p_n \geq A_{\alpha} \right\}
\end{equation}
if  $A_\alpha \in (0,1)$ can be chosen such that
\begin{equation} \label{eq:equality_constraint}
\PFA (\taus)  = \alpha.
\end{equation}
\end{theorem}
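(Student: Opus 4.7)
My plan is to reduce the constrained problem \eqref{eq:BayesConstProb} to an unconstrained Bayes-risk minimization via Lagrangian relaxation, and then solve the relaxed problem using optimal stopping theory with the posterior probability $p_n$ as the sufficient statistic. For a Lagrange multiplier $c > 0$, define the Bayes risk $R_c(\tau) = \PFA(\tau) + c\,\ADD(\tau)$. Using that $\Prob(\Gamma > \tau \mid X_1^\tau) = 1 - p_\tau$ and that $(\tau-\Gamma)^+ = \sum_{n=0}^\infty \indic_{\{\Gamma \le n < \tau\}}$, together with the fact that $\{\tau > n\}$ is $\sigma(X_1^n)$-measurable, one shows
\[
R_c(\tau) \;=\; \Expect\bl[(1-p_\tau) + c \sum_{n=0}^{\tau-1} p_n\br].
\]
Since the recursion \eqref{eq:pkrec} makes $\{p_n\}$ a Markov chain driven by $\{X_n\}$, minimizing $R_c$ is a Markov optimal stopping problem whose state is $p_n$.

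Next I would invoke the Bellman equation for the value function $V_c(p) = \inf_\tau \Expect\bl[(1-p_\tau) + c\sum_{n=0}^{\tau-1}p_n \,\big|\, p_0 = p\br]$, namely
\[
V_c(p) \;=\; \min\!\bl\{\, 1-p,\;\; cp + \Expect\bl[V_c(\Phi(X,p)) \,\big|\, p_0=p\br] \,\br\}.
\]
The heart of the argument is to show that $V_c$ is concave on $[0,1]$, which follows from the standard induction on the finite-horizon truncations: the terminal cost $1-p$ is affine, and the DP operator preserves concavity because $\Phi(X,p)$ is monotone in $p$ and the expectation over $X$ integrates a concave function of an affine map of $p$ (after writing $\tilde p_n = p + (1-p)\rho$). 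Concavity, together with $V_c(1)=0$ and $V_c(0) \ge 0$, forces the stopping region $\{V_c(p) = 1-p\}$ to be an interval of the form $[A(c),1]$, so the Bayes-optimal rule is the threshold test $\tau^*_c = \inf\{n \geq 1 : p_n \ge A(c)\}$, which has the same form as \eqref{eq:OptimalAlgo}.

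To close the loop with the constrained problem, I would show that as $c$ ranges over $(0,\infty)$ the threshold $A(c)$ sweeps through $(0,1)$ continuously and monotonically, so $\PFA(\tau^*_c) = \Expect[1-p_{\tau^*_c}]$ traces out all values in $(0,1)$. Hence for the given $\alpha$ we can select $c^*$ so that $A(c^*) = A_\alpha$ satisfies \eqref{eq:equality_constraint}. Bayes optimality of $\taus$ then transfers to the constrained problem by a standard Lagrangian argument: for any $\tau \in \Cc_\alpha$,
\[
\ADD(\taus) + \tfrac{1}{c^*}\PFA(\taus) \;\le\; \ADD(\tau) + \tfrac{1}{c^*}\PFA(\tau) \;\le\; \ADD(\tau) + \tfrac{\alpha}{c^*},
\]
and since $\PFA(\taus) = \alpha$, this yields $\ADD(\taus) \le \ADD(\tau)$.

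The main obstacle is the concavity / threshold-structure step: one must verify that the DP operator $p \mapsto cp + \Expect[V_c(\Phi(X,p))\mid p_0=p]$ preserves concavity despite the nonlinear posterior update \eqref{eq:recursionTake}, which is where the geometric prior's memoryless property (making $\tilde p$ affine in $p$) is crucial. A secondary technical wrinkle is ensuring that $\PFA(\tau^*_c)$ takes the exact value $\alpha$ — under mild regularity on $f_0, f_1$ (so that $p_n$ has no atom at the boundary) no randomization is needed, but in the general case one must allow randomized stopping at $A_\alpha$ to meet the equality \eqref{eq:equality_constraint} exactly.
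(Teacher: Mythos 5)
Your proposal follows essentially the same route as the paper: Lagrangian relaxation of the constrained problem (your $c$ is the reciprocal of the paper's $\lambda_f$), reduction to a Markov optimal stopping problem with belief state $p_n$, the Bellman equation, an induction argument for concavity of the value function yielding a threshold rule, and then transferring optimality back to the constrained problem when the constraint binds. You spell out a few steps the paper leaves to the cited references (the explicit Bayes-risk decomposition $\Expect[(1-p_\tau)+c\sum_{n<\tau}p_n]$, why the DP operator preserves concavity via the affine map $p\mapsto\tilde p$, and the continuity/monotonicity of $A(c)$ needed to hit $\PFA=\alpha$), but these are elaborations, not a different argument.
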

\begin{proof}
Towards solving \eqref{eq:BayesConstProb}, we consider a Lagrangian relaxation of this problem that can be solved  using dynamic programming.
\begin{equation}
\label{eq:Lagrangian}
J^* = \min_{\tau} \Big(\Expect\left[(\tau - \Gamma)^+\right] + \lambda_f \; \Prob(\tau<\Gamma)\Big)
\end{equation}
where $\lambda_f$ is the Lagrange multiplier, $\lambda_f\geq 0$.
It is shown in \cite{shir-siamtpa-1963, shir-opt-stop-book-1978} that under the assumption \eqref{eq:equality_constraint}, there exists a $\lambda_f$ such that the solution to \eqref{eq:Lagrangian}
is also the solution to \eqref{eq:BayesConstProb}.

Let $\Theta_n$ denote the state of the system at time $n$. After the stopping time $\tau$ it
is assumed that the system enters a terminal state ${\cal T}$ and stays there.
For $n < \tau$, we have  $\Theta_n =0$ for $n < \Gamma$, and $\Theta_n = 1$ otherwise. Then we can write
\[
\ADD(\tau) = \Expect\left[ \sum_{n=0}^{\tau-1} \indic_{\{\Theta_n= 1\}} \right] \ \ \  \ \ \text{  and  } \ \ \ \ \ \PFA(\tau) = \Expect [ \indic_{\{\Theta_\tau = 0\}}].
\]
Furthermore, let $D_n$ denote the stopping decision variable at time $n$, i.e., $D_n = 0$ if $k < \tau$ and $D_n = 1$ otherwise.
Then the optimization problem in \eqref{eq:Lagrangian} can be written as a minimization of an additive cost over time:
\[
J^* = \min_{\tau}  \Expect\left[\sum_{n=0}^{\tau}  g_n (\Theta_n, D_n)\right]
\]
with
\[
g_n (\theta, d) = \indic_{\{\theta \neq {\cal T}\} } \; \left[  \indic_{\{\theta=1\}} \indic_{\{d=0\}} + \lambda_f \; \indic_{\{\theta=0\}} \indic_{\{d=1\}} \right].
\]
Using standard arguments \cite{bert-dyn-prog-book-1995} it can be seen that this optimization problem
can be solved using infinite horizon dynamic programming with sufficient statistic (belief state) given by:
\[
\Prob( \Theta_n = 1 \; | \; X_1^n) = \Prob ( \Gamma \leq n \; | \; X_1^n) = p_n
\]
which is the {\em a posteriori probability} of \eqref{eq:pk}.

The optimal policy for the problem given in \eqref{eq:Lagrangian} can be obtained from the solution to the Bellman equation:
\begin{equation} \label{eq:Bellman}
J(p_n) = \min_{d_n}  \lambda_f \; (1-p_n) \indic_{\{d_n=1\}} + \indic_{\{d_n=0\}} \left[ p_n + A_J (p_n)\right]
\end{equation}
where
\[
A_J (p_n) = \Expect[J (\Phi(X_{n+1}, p_n))].
\]
It can be shown by using an induction argument that both $J$ and $A_J$ are non-negative
concave functions on the interval $[0,1]$,  and that $J(1) = A_J (1) =0$.
Then, it is easy to show that the optimal solution for the problem
in \eqref{eq:Lagrangian} has the following structure:
\[
\taus = \inf\left\{ k\geq 1: p_n \geq A \right\}.
\]
\end{proof}
See Fig. \ref{fig:BellmanCurvesForShiryaev} for a plot of $\lambda_f (1-p_n)$ and $p_n + A_J(p_n)$ as a function of  $p_n$. Fig. \ref{fig:Shiryaev_ADD_PFA_Evolution} shows a typical evolution of the optimal Shiryaev algorithm.
\begin{figure}[!tb]
  \centering
  \subfloat[A plot of the cost curves for $\lambda_f=10$,  $\rho=0.01$, $f_0 \sim {\cal N}(0,1)$, $f_1 \sim {\cal N}(0.75,1)$]
  {\label{fig:BellmanCurvesForShiryaev}\includegraphics[width=0.45\textwidth]{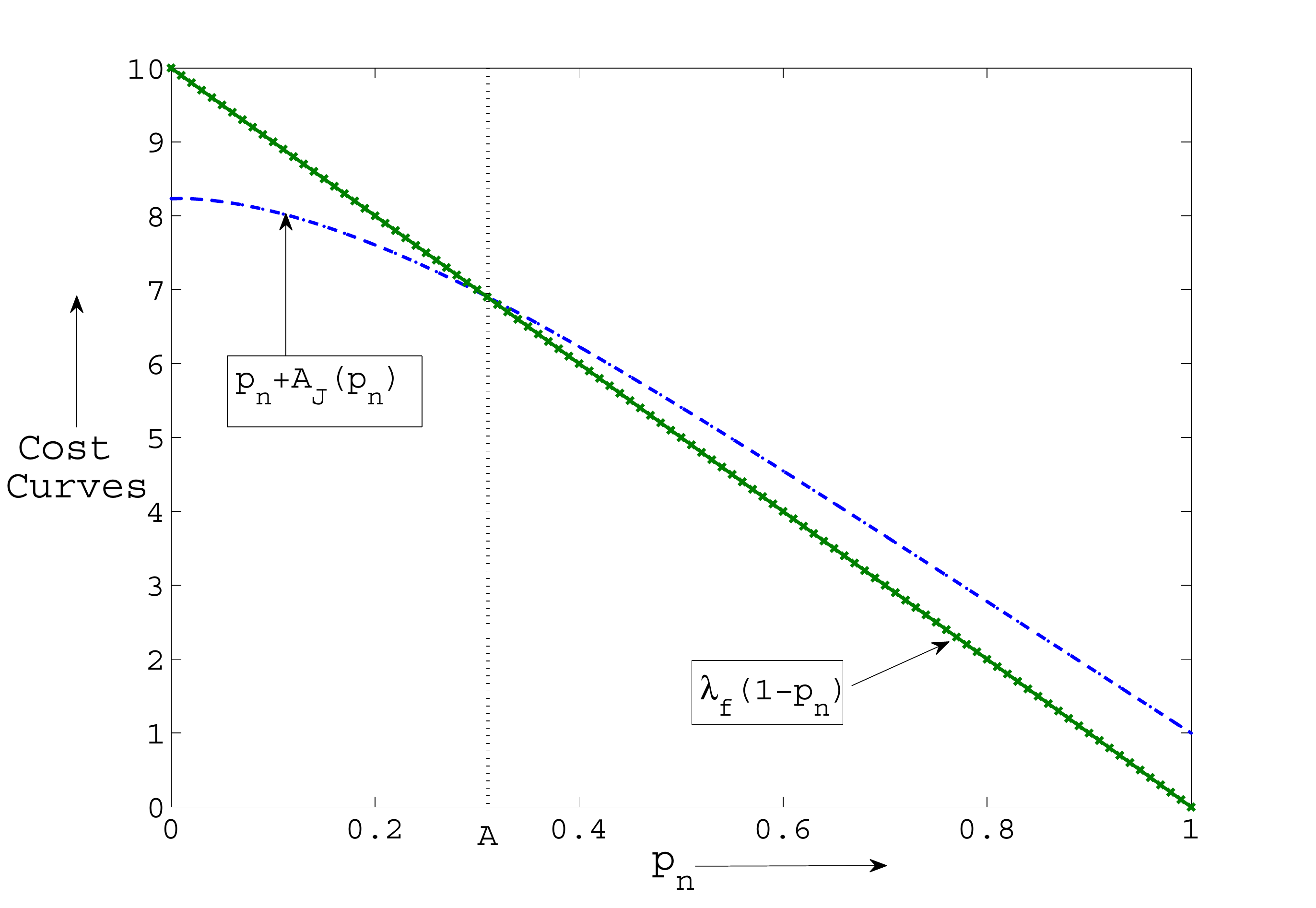}} \hspace{0.5cm}
  \subfloat[Typical Evolution of the Shiryaev algorithm. Threshold $A=0.99$ and change point $\Gamma=100$.]{\label{fig:Shiryaev_ADD_PFA_Evolution}\includegraphics[width=0.45\textwidth]{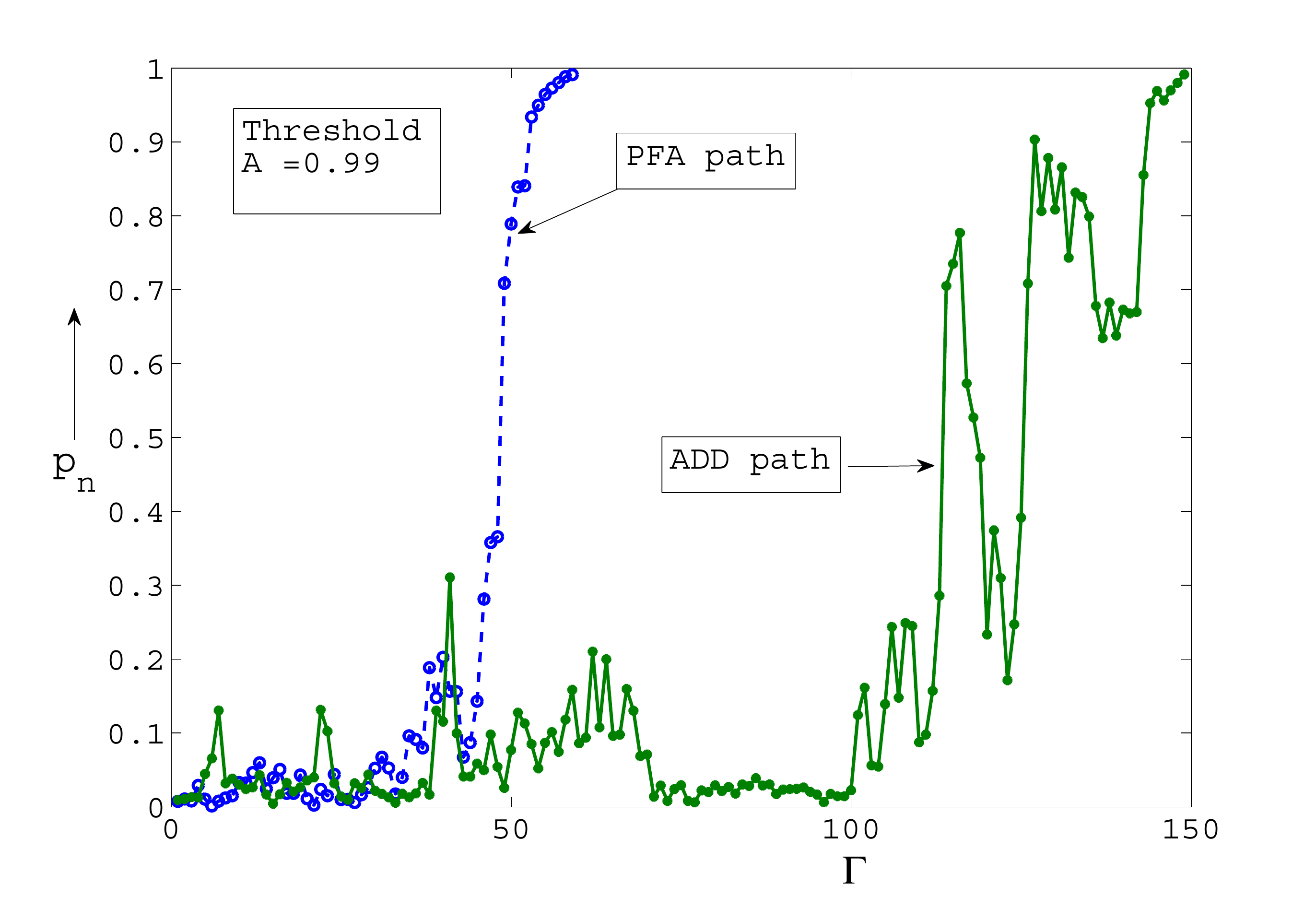}}
    \caption{Cost curves and typical evolution of the Shiryaev algorithm.\label{fig:CostCurvesAndEvolutionforShiryaev2}
}
 \end{figure}

We now discuss some alternative descriptions of the Shiryaev algorithm. Let
\[
\Lambda_n  = \frac{p_n}{(1-p_n)}
\]
and
\[
R_{n,\rho}  = \frac{p_n}{(1-p_n)\rho}.
\]
We note that $\Lambda_n$ is the likelihood ratio of the hypotheses `$H_1: \Gamma \leq n$' and `$H_0: \Gamma > n$' averaged over the change point:
\begin{eqnarray}
\Lambda_n & = & \frac{p_n}{(1-p_n)} \nonumber\\ & = & \frac{\Prob(\Gamma\leq n | X_1^n)}{\Prob(\Gamma> n | X_1^n)} \nonumber\\
&=& \frac{\sum_{k=1}^n (1-\rho)^{k-1} \rho \prod_{i=1}^{k-1} f_0(X_i) \prod_{i=k}^{n} f_1(X_i)}{(1-\rho)^n \prod_{i=1}^n f_0(X_i)}\nonumber\\
&=& \frac{1}{(1-\rho)^n} \sum_{k=1}^n (1-\rho)^{k-1} \rho \prod_{i=k}^{n} L(X_i) .
\label{eq:ShiryaevLambdanExpanded}
\end{eqnarray}
where as defined before $L(X_i) = \frac{f_1(X_i)}{f_0(X_i)}$.
Also, $R_{n,\rho}$ is a scaled version of $\Lambda_n$:
\begin{equation}\label{eq:ShiryaevRnrhoExpanded}
R_{n,\rho} = \frac{1}{(1-\rho)^n} \sum_{k=1}^n (1-\rho)^{k-1} \prod_{i=k}^{n} L(X_i) .
\end{equation}
Like $p_n$, $R_{n,\rho}$ can also be computed using a recursion:
\[R_{n+1,\rho} = \frac{1+R_{n,\rho}}{1-\rho} L (X_{n+1}), \quad R_{0,\rho} = 0.\]
It is easy to see that $\Lambda_n$ and $R_{n,\rho}$ have one-to-one mappings with the Shiryaev statistic $p_n$.
\begin{algorithm}[Shiryaev Algorithm] \label{algo:Shiryaev} The following three stopping times are equivalent and define the Shiryaev stopping time.
 \begin{equation}\label{eq:Shiryaevalgo}
 \taus = \inf\{ n \geq 1: p_n \geq A\}
 \end{equation}
\begin{equation}\label{eq:ShiryaevLambdan}
\taus = \inf\{ n \geq 1: \Lambda_n \geq a\}
\end{equation}
\begin{equation}\label{eq:ShiryaevRnrho}
\taus = \inf\{ n \geq 1: R_{n,\rho} \geq \frac{a}{\rho}\}
\end{equation}
with $a=\frac{A}{1-A}$
\end{algorithm}
We will later see that defining the Shiryaev algorithm using the statistic $\Lambda_n$
\eqref{eq:ShiryaevLambdanExpanded} will be useful in Section~\ref{sec:NONIIDBayes},
where we discuss the Bayesian quickest change detection problem in a  non-i.i.d. setting.
Also, defining the Shiryaev algorithm using the statistic $R_{n,\rho}$
\eqref{eq:ShiryaevRnrhoExpanded} will be useful in
Section~\ref{sec:MinimaxCent} where we discuss quickest change detection in a minimax setting.

\subsection{General Asymptotic Bayesian Theory}
\label{sec:NONIIDBayes}
As mentioned earlier, when the observations are not i.i.d.\ conditioned on the change point $\Gamma$, then finding an exact solution to the problem \eqref{eq:BayesConstProb} is difficult. Fortunately, a Bayesian {\em asymptotic} theory can be developed for quite general pre- and post- change distributions \cite{tart-veer-siamtpa-2005}. In this section we discuss the results from \cite{tart-veer-siamtpa-2005} and provide a glimpse of the proofs.

We first state the observation model studied in \cite{tart-veer-siamtpa-2005}. When the process evolves in the pre-change regime, the conditional density of $X_n$ given $X_1^{n-1}$ is $f_{0,n}(X_n|X_1^{n-1})$. After the change happens, the conditional density of $X_n$ given $X_1^{n-1}$  is given by $f_{1,n}(X_n|X_1^{n-1})$.

As in the i.i.d.\ case, we can define the {\em a posteriori} probability of change having taken place before time $n$, given the observation up to time $n$, i.e.,
\begin{equation} \label{eq:pk2}
p_n = \Prob ( \Gamma \leq n \; | \; X_1^n )
\end{equation}
with the understanding that the recursion \eqref{eq:pkrec} is no longer valid, except for the i.i.d. model.


We note that in the non-i.i.d. case also $\Lambda_n=\frac{p_n}{1-p_n}$ is the
likelihood ratio of the hypotheses ``$H_1: \Gamma \leq n$'' and ``$H_0: \Gamma > n$''.
If $\pi_n = \Prob \{\Gamma= n\}$, then following \eqref{eq:ShiryaevLambdanExpanded}, $\Lambda_n$ can be written for a general
change point distribution $\{\pi_n\}$ as
\[
\begin{split}
\Lambda_n  & = \frac{p_n}{(1-p_n)} \\
& = \frac{\Prob(\Gamma\leq n | X_1^n)}{\Prob(\Gamma> n | X_1^n)}\\
&= \frac{\sum_{k=1}^n \pi_n \prod_{i=1}^{k-1} f_{0,i}(X_i|X_1^{i-1}) \prod_{i=k}^{n} f_{1,i}(X_i|X_1^{i-1})}{\Prob(\Gamma>n) \prod_{i=1}^n f_{0,i}(X_i|X_1^{i-1})}\\
&= \frac{1}{\Prob(\Gamma>n)} \sum_{k=1}^n \pi_n \prod_{i=k}^{n} Y_i
\end{split}
\]
where
\[
Y_i = \log \frac{f_{1,i}(X_i|X_1^{i-1})}{f_{0,i}(X_i|X_1^{i-1})}.
\]
If $\Gamma$ is geometrically distributed with parameter $\rho$, the above expression reduces to
\begin{eqnarray*}
\Lambda_n = \frac{1}{(1-\rho)^n} \sum_{k=1}^n (1-\rho)^{k-1} \rho \prod_{i=k}^{n} Y_i.
\end{eqnarray*}
In fact, $\Lambda_n$ can even be computed recursively in this case:
\begin{eqnarray}
\label{eq:Lambda_n}
\Lambda_{n+1} = \frac{1 + \Lambda_n}{1-\rho} Y_{n+1}
\end{eqnarray}
with $\Lambda_0=0$.

In \cite{tart-veer-siamtpa-2005}, it is shown that if there exists $q$ such that
\begin{equation}
\label{eq:noniidCond_exist_q}
\frac{1}{t}\sum_{i=n}^{n+t} Y_i \to q \ \ \ \text{ a.s. }\Prob_n \ \ \text{ when } t \to \infty  \ \ \forall n
\end{equation}
($q=D(f_1 || f_0)$ for the i.i.d. model),
 and some additional conditions on the rates of convergence are satisfied, then the
 Shiryaev algorithm \eqref{eq:Shiryaevalgo} is asymptotically optimal for Bayesian optimization problem of \eqref{eq:BayesConstProb} as $\alpha \to 0$.
In fact, $\taus$ minimizes all moments of the detection delay as well as the moments of the delay, conditioned on the change point.
The asymptotic optimality proof is based on first finding a lower bound on the asymptotic moment of the delay of all
the detection procedures in the class $\Cc_\alpha$, as $\alpha \to 0$, and then showing that the
Shiryaev stopping time \eqref{eq:Shiryaevalgo} achieves that lower bound asymptotically.

To state the theorem, we need the following definitions. Let $q$ be the limit as specified in
\eqref{eq:noniidCond_exist_q}, and let $0< \epsilon < 1$. Then define
\[{T_\epsilon}^{(n)} = \sup \left\{ t\geq 1: \Big| \frac{1}{t}\sum_{i=n}^{n+t} Y_i - q \Big| > \epsilon \right\}. \]
Thus, ${T_\epsilon}^{(n)}$ is the last time that the log likelihood sum $\sum_{i=n}^{n+t} Y_i$
falls outside an interval of length $\epsilon$ around $q$.
In general, existence of the limit $q$ in \eqref{eq:noniidCond_exist_q}
only guarantees $\Prob_n({T_\epsilon}^{(n)}< \infty)=1$, and not the finiteness of the
moments of ${T_\epsilon}^{(n)}$.
Such conditions are needed for existence of moments of detection delay of $\taus$. In particular, for some $r \geq 1$, we need:
\begin{equation}
\label{eq:noniidCond_nmoment_exists}
\Expect_n[{T_\epsilon}^{(n)}]^r < \infty \text{ for all } \ \epsilon > 0 \text{ and } \ n \geq 1,
\end{equation}
and
\begin{equation}
\label{eq:noniidCond_Avg_moment_exists}
\sum_{n=1}^{\infty} \pi_n \Expect_n[{T_\epsilon}^{(n)}]^r < \infty \text{ for all } \ \epsilon > 0.
\end{equation}
Now, define
\[
d = - \lim_{n\to \infty} \frac{\log \Prob(\Gamma > n)}{n}.
\]
The parameter $d$ captures the tail parameter of the distribution of $\Gamma$. If $\Gamma$ is `heavy tailed'
then $d=0$, and if $\Gamma$ has an `exponential tail' then $d>0$. For example, for the geometric prior with parameter $\rho$, $d=|\log(1-\rho)|$.

\begin{theorem}[\cite{tart-veer-siamtpa-2005}]
\label{thm:BayesianAsympVVV}
 If the likelihood ratios are such that \eqref{eq:noniidCond_exist_q} is satisfied then
\begin{enumerate}
\item If $a= a_\alpha = \frac{1-\alpha}{\alpha}$, then $\taus$ as defined in \eqref{eq:Shiryaevalgo} belongs to the set $\Cc_\alpha$.
\item For all $n \geq 1$, the $m$-th moment of the conditional delay, conditioned on $\Gamma=n$, satisfies:
\begin{equation}
\label{eq:noniid_LB1}
\underset{\tau\in \Cc_\alpha}{\operatorname{\inf}}\ \ \Expect_n[(\tau-n)^{+}]^{m}
\geq \left(\frac{|\log\alpha |}{q + d}\right)^m \Big( 1 + o(1) \Big) \text{ as } \alpha \to 0.
\end{equation}
\item For all $n \geq 1$, if \eqref{eq:noniidCond_nmoment_exists} is satisfied then for all $m\leq r$,
\begin{equation}
\label{eq:noniid_UB1}
\begin{split}
\Expect_n[(\taus-n)^{+}]^{m}
& \leq \left(\frac{\log a }{q + d}\right)^m \Big( 1 + o(1) \Big) \text{ as } a \to \infty\\
& = \left(\frac{|\log\alpha |}{q + d}\right)^m \Big( 1 + o(1) \Big) \text{ as } \alpha \to 0 \text{~if $a = a_\alpha = \frac{1-\alpha}{\alpha}$.}
\end{split}
\end{equation}

 \item The $m$-th (unconditional) moment of the delay satisfies
\begin{equation}
\label{eq:noniid_LB2}
\underset{\tau\in \Cc_\alpha}{\operatorname{\inf}}\ \ \Expect[(\tau-\Gamma)^{+}]^{m}
\geq \left(\frac{|\log\alpha|}{q + d}\right)^m \Big( 1 + o(1) \Big) \text{ as } \alpha \to 0.
\end{equation}
\item If \eqref{eq:noniidCond_nmoment_exists} and \eqref{eq:noniidCond_Avg_moment_exists} are satisfied,
then for all $m\leq r$
\begin{equation}
\label{eq:noniid_UB2}
\begin{split}
\Expect[(\taus-\Gamma)^{+}]^{m}
& \leq \left(\frac{\log a}{q + d}\right)^m \Big( 1 + o(1) \Big) \text{ as } a \to \infty\\
& = \left(\frac{|\log \alpha|}{q + d}\right)^m \Big( 1 + o(1) \Big) \text{ as } \alpha \to 0 \text{~if $a=a_\alpha = \frac{1-\alpha}{\alpha}$}.
\end{split}
\end{equation}
\end{enumerate}
\end{theorem}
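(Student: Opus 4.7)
The plan is to organize the five claims into three groups: part~(1) follows directly from the definition of $p_n$ as a posterior probability; parts~(2) and~(4), the asymptotic lower bounds valid for every $\tau \in \Cc_\alpha$, reduce to a single change-of-measure lemma; and parts~(3) and~(5), the matching upper bounds for $\taus$, rest on the strong-law hypothesis \eqref{eq:noniidCond_exist_q} combined with the moment hypotheses \eqref{eq:noniidCond_nmoment_exists} and \eqref{eq:noniidCond_Avg_moment_exists}.

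For~(1), I would write
\[
\PFA(\taus) = \Prob(\taus < \Gamma) = \Expect\bigl[\indic_{\{\taus<\infty\}}(1-p_{\taus})\bigr],
\]
since conditional on $X_1^{\taus}$ the event $\{\Gamma>\taus\}$ has probability $1-p_{\taus}$. The equivalences in Algorithm~\ref{algo:Shiryaev} convert $\Lambda_n\geq a_\alpha=(1-\alpha)/\alpha$ into $p_n\geq 1-\alpha$, so on $\{\taus<\infty\}$ we have $1-p_{\taus}\leq\alpha$, giving $\PFA(\taus)\leq\alpha$. For the lower bounds (2) and (4), the key lemma I would establish is that for every $\epsilon\in(0,1)$ and $N_\alpha=(1-\epsilon)|\log\alpha|/(q+d)$, every $\tau\in\Cc_\alpha$ satisfies $\Prob_n(\tau<n+N_\alpha)\to 0$ as $\alpha\to 0$, together with its prior-averaged counterpart. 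Markov's inequality then gives $\Expect_n[(\tau-n)^+]^m \geq N_\alpha^m \,\Prob_n(\tau\geq n+N_\alpha)$ and hence (2); integration against $\pi_n$ gives (4). To prove the lemma I would split
\[
\Prob_n(\tau<n+N_\alpha) = \Prob_\infty(\tau<n) + \Prob_n(n\leq\tau<n+N_\alpha),
\]
apply the likelihood-ratio change of measure to write $\Prob_n(n\leq\tau<n+N_\alpha)=\Expect_\infty\bigl[\indic_{\{n\leq\tau<n+N_\alpha\}}\exp\bigl(\sum_{i=n}^{\tau}Y_i\bigr)\bigr]$, and dominate the right-hand side by $e^{(q+\epsilon/2)N_\alpha}\,\Prob_\infty(\tau\geq n)$ plus the $\Prob_n$-probability of the exceptional event where the partial sums $\sum_{i=n}^{k}Y_i$ exceed $(q+\epsilon/2)N_\alpha$ for some $k<n+N_\alpha$; the latter is $o(1)$ by \eqref{eq:noniidCond_exist_q}. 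The PFA constraint is converted into a bound on $\Prob_\infty(\tau\geq n)$ via the identity $\Prob(\tau<\Gamma)=\Expect_\infty[\Prob(\Gamma>\tau)]$ together with the prior-tail decay $\Prob(\Gamma>k)\approx e^{-dk}$; this is precisely where $d$ enters and combines with $q$ to produce the rate $q+d$.

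For the upper bounds (3) and (5), expand $\Lambda_n$ using the non-i.i.d.\ analogue of \eqref{eq:ShiryaevLambdanExpanded} and observe that, under $\Prob_n$ and for large $t$, $t^{-1}\log\Lambda_{n+t}\to q+d$ almost surely: the likelihood ratios supply a rate-$q$ drift by \eqref{eq:noniidCond_exist_q}, while the factor $\Prob(\Gamma>n+t)^{-1}$ contributes the rate $d$. Retaining only the summand $k=n$ in the expansion yields the deterministic lower estimate $\log\Lambda_{n+t}\geq \sum_{i=n}^{n+t}Y_i + td + O(1)$, so $\taus-n\leq t$ once the right-hand side exceeds $\log a$. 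Consequently, on the event where $|t^{-1}\sum_{i=n}^{n+t}Y_i - q|\leq\epsilon$ for all $t\geq T_\epsilon^{(n)}$, one obtains the pathwise bound
\[
(\taus-n)^+ \leq \frac{\log a}{q+d-\epsilon} + T_\epsilon^{(n)} + C_\epsilon.
\]
Taking $m$th powers and invoking \eqref{eq:noniidCond_nmoment_exists} to pass to $\Prob_n$-expectations yields (3); weighting by $\pi_n$ and using \eqref{eq:noniidCond_Avg_moment_exists} yields (5). Letting $\epsilon\to 0$ after $\alpha\to 0$ and substituting $a=a_\alpha$, so that $\log a=|\log\alpha|+o(1)$, matches the lower bounds. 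The main obstacle is coordinating the two exponential rates $q$ and $d$ in the change-of-measure step for the lower bounds; once the correct splitting (Chernoff-type exponent together with an SLLN-driven exceptional event) is chosen, the upper-bound side reduces to a standard SLLN-plus-uniform-integrability argument supplied by the $T_\epsilon^{(n)}$ hypotheses.
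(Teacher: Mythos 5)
Your overall organization mirrors the paper's own sketch quite closely: part~(1) via the identity $\PFA(\taus)=\Expect[1-p_{\taus}]$, parts~(2) and~(4) via a Markov-type inequality reducing the problem to a uniform lower bound on $\Prob_n(\tau\geq n+N_\alpha)$ over $\Cc_\alpha$, and parts~(3) and~(5) via the last-exit time $T_\epsilon^{(n)}$ giving the pathwise bound $(\taus-n)^+\lesssim \log a/(q+d-\epsilon)+T_\epsilon^{(n)}$, followed by taking $m$th moments. Parts~(1), (3), and (5) of your proposal are correct and essentially coincide with the paper's argument.

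There is, however, a genuine gap in the change-of-measure step for the lower bounds~(2) and~(4). After rewriting
$\Prob_n(n\leq\tau<n+N_\alpha)=\Expect_\infty\bigl[\indic_{\{n\leq\tau<n+N_\alpha\}}\exp\bigl(\sum_{i=n}^{\tau}Y_i\bigr)\bigr]$
and splitting off the SLLN-exceptional event, you dominate the remainder by $e^{(q+\epsilon/2)N_\alpha}\,\Prob_\infty(\tau\geq n)$ and then claim that the PFA constraint bounds $\Prob_\infty(\tau\geq n)$. That direction is wrong: a small PFA forces $\tau$ to be \emph{large} under $\Prob_\infty$, so $\Prob_\infty(\tau\geq n)$ is close to $1$, not small, while the prefactor $e^{(q+\epsilon/2)N_\alpha}$ grows roughly like $\alpha^{-q/(q+d)}$; the product does not vanish. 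The quantity you actually control is $\Prob_\infty(\tau<n+N_\alpha)$. Since $\Dd\subset\{n\leq\tau<n+N_\alpha\}\subset\{\tau<n+N_\alpha\}$, dominate instead by $e^{(q+\epsilon')N_\alpha}\,\Prob_\infty(\tau<n+N_\alpha)$, and use
\[
\alpha\geq\PFA(\tau)=\sum_{k}\pi_k\,\Prob_\infty(\tau<k)\geq\Prob(\Gamma\geq n+N_\alpha)\,\Prob_\infty(\tau<n+N_\alpha),
\]
together with $-\log\Prob(\Gamma>k)\sim d k$, to get $\Prob_\infty(\tau<n+N_\alpha)\lesssim\alpha\,e^{d(n+N_\alpha)}$. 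Substituting $N_\alpha=(1-\epsilon)|\log\alpha|/(q+d)$ makes $e^{(q+\epsilon')N_\alpha}\cdot\alpha\,e^{d(n+N_\alpha)}\to 0$ provided $\epsilon'$ is chosen small enough relative to $\epsilon$, $q$, and $d$ (the fixed choice $\epsilon'=\epsilon/2$ fails when $q+d$ is small). This is the step where $q$ and $d$ actually combine to produce the rate $q+d$, and it is the piece of your argument that must be repaired.
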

\smallskip

\begin{proof} We provide  sketches of the proofs for  part 1), 2) and 3). The proofs of 4) and 5) follow by averaging the results in 2) and 3) over the prior on the change point.
\begin{enumerate}
\item Note that
\[\PFA(\taus) = \Expect[1-p_{\taus}] \leq 1-A_\alpha.\]
Thus, $A_\alpha=1-\alpha$ would ensure $\PFA(\taus)\leq \alpha$. Since,
$a_\alpha = \frac{A_\alpha}{1-A_\alpha}$, we have the result.
\item 
Let $L_\alpha$ be a positive number. By Chebyshev inequality,
\[\Prob_n((\tau-n)^m > L_\alpha^m) \leq \frac{\Expect_n[(\tau-n)^{+}]^{m}}{L_\alpha^m}.\]
This gives a lower bound on the detection delay
\begin{eqnarray*}
\Expect_n[(\tau-n)^{+}]^{m} &\geq& L_\alpha^m \ \Prob_n((\tau-n)^m > L_\alpha^m)\\
&=& L_\alpha^m \ \Prob_n(\tau-n > L_\alpha).
\end{eqnarray*}
Minimizing over the family $C_\alpha$, we get
\begin{eqnarray*}
\inf_{\tau\in \Cc_\alpha} \; \Expect_n[(\tau-n)^{+}]^{m}
\geq L_\alpha^m \ \left[\underset{\tau\in \Cc_\alpha}{\operatorname{\inf}}\Prob_n(\tau-n > L_\alpha)\right].
\end{eqnarray*}
Thus, if
\begin{equation}\label{eq:noniddCond_supProb}
\inf_{\tau\in \Cc_\alpha} \; \Prob_n(\tau-n > L_\alpha) \to 1 \text{ as } \alpha \to 0
\end{equation}
then $L_\alpha^m$ is a lower found for the detection delay of the family $\Cc_\alpha$.
It is shown in \cite{tart-veer-siamtpa-2005}
that if  \eqref{eq:noniidCond_exist_q} is satisfied then \eqref{eq:noniddCond_supProb} is true for
$ L_\alpha = (1-\epsilon) \frac{|\log\alpha|}{q + d}$ for all $\epsilon > 0$.
\item We only summarize the technique used to obtain the upper bound.
Let $\{S_n\}$ be any stochastic process such that
\[
\frac{S_n}{n} \to q \text{ as } n \to \infty.
\]
Let
\[
\eta = \inf\{n\geq 1: S_n > b\}
\]
and for $\epp > 0$
\[
T_\epp = \sup\{n\geq 1: \Big| \frac{S_n}{n}- b \Big| > \epp\}.
\]
First note that $S_{\eta-1} < b < S_\eta$. Also, on the set $\{k \geq T_\epp\}$,
$|S_n/n-q| < \epp$ for all $n\geq k$. The event $\{|S_n/n-q| < \epp\}$ implies $n \leq \frac{S_n}{q-\epp}$.
Using these observations we have
\begin{eqnarray*}
\eta-1 &=& (\eta-1) \indic_{\{\eta-1>T_\epp\}} + (\eta-1) \indic_{\{\eta-1 \leq T_\epp\}}\\
       &\leq& (\eta-1) \indic_{\{\eta-1>T_\epp\}} + T_\epp\\
       &\leq& \frac{b}{q-\epp}  + T_\epp.
\end{eqnarray*}
If $\Expect[T_\epp] < \infty$, and because $\epp$ was chosen arbitrarily, we have
\[
\Expect[\eta] \leq \frac{b}{q} \Big(1+o(1)\Big) \text{ as } b\to \infty.
\]
This also motivates the need for conditions on finiteness of higher order moments of $T_\epp$ to
obtain upper bound on the moments of the detection delay.
\end{enumerate}
\end{proof}

From the above theorem, the following corollary easily follows.
\begin{corrly}[\cite{tart-veer-siamtpa-2005}]\label{eq:ShiryaevnnoniddCorollary}
 If the likelihood ratios are such that \eqref{eq:noniidCond_exist_q},  \eqref{eq:noniidCond_nmoment_exists} and \eqref{eq:noniidCond_Avg_moment_exists} are satisfied for some $r \geq 1$, then
for the Shiryaev stopping time $\taus$ defined in \eqref{eq:Shiryaevalgo}
 \begin{equation}
\inf_{\tau\in \Cc_\alpha} ~ \Expect[(\tau-\Gamma)^+]^m \sim \Expect[(\taus-\Gamma)^+]^m
\sim \left(\frac{|\log\alpha|}{q + d}\right)^m \text{ as } \alpha \to 0, \text{ for all $m \leq r$.}
\end{equation}
\end{corrly}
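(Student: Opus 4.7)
The corollary is essentially a bookkeeping consequence of Theorem~\ref{thm:BayesianAsympVVV}, so my plan is to chain together parts 1), 4), and 5) of that theorem to sandwich the infimum between a lower bound and an upper bound that agree to leading order.

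First I would fix $m \le r$ and set the threshold for the Shiryaev rule at $a = a_\alpha = (1-\alpha)/\alpha$. By part 1) of Theorem~\ref{thm:BayesianAsympVVV}, this choice puts $\taus$ into the admissible class $\Cc_\alpha$, so trivially
\begin{equation*}
\inf_{\tau \in \Cc_\alpha} \Expect[(\tau-\Gamma)^+]^m \;\le\; \Expect[(\taus-\Gamma)^+]^m.
\end{equation*}
Next, I would invoke part 5): under the hypotheses \eqref{eq:noniidCond_exist_q}, \eqref{eq:noniidCond_nmoment_exists}, and \eqref{eq:noniidCond_Avg_moment_exists}, the upper bound
\begin{equation*}
\Expect[(\taus-\Gamma)^+]^m \;\le\; \left(\frac{|\log\alpha|}{q+d}\right)^m (1 + o(1))
\end{equation*}
holds as $\alpha \to 0$ with the above choice of $a_\alpha$. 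Combined with the previous display, this controls the infimum from above.

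For the matching lower bound I would appeal directly to part 4) of the theorem, which asserts
\begin{equation*}
\inf_{\tau \in \Cc_\alpha} \Expect[(\tau-\Gamma)^+]^m \;\ge\; \left(\frac{|\log\alpha|}{q+d}\right)^m (1 + o(1))
\end{equation*}
as $\alpha \to 0$ under assumption \eqref{eq:noniidCond_exist_q} alone. Sandwiching the infimum between this lower bound and the upper bound on $\Expect[(\taus-\Gamma)^+]^m$, both of which equal $\bigl(|\log\alpha|/(q+d)\bigr)^m (1+o(1))$, yields simultaneously
\begin{equation*}
\inf_{\tau \in \Cc_\alpha} \Expect[(\tau-\Gamma)^+]^m \;\sim\; \Expect[(\taus-\Gamma)^+]^m \;\sim\; \left(\frac{|\log\alpha|}{q+d}\right)^m,
\end{equation*}
which is the claimed statement.

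There is no substantive obstacle here: all of the analytic work has already been packaged into Theorem~\ref{thm:BayesianAsympVVV}, and the only subtlety I would flag is making sure that the specific threshold $a_\alpha = (1-\alpha)/\alpha$ used to guarantee $\taus \in \Cc_\alpha$ in part 1) is the same threshold whose second-line asymptotic form appears in the upper bound of part 5); this alignment is what lets the two bounds be compared without an extra factor. Once that identification is made explicit, the proof is a one-line sandwich.
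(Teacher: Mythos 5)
Your proposal is correct and is exactly what the paper intends when it writes that the corollary "easily follows" from Theorem~\ref{thm:BayesianAsympVVV}: you chain part 1) to place $\taus$ in $\Cc_\alpha$ for $a_\alpha=(1-\alpha)/\alpha$, part 4) for the lower bound on the infimum, and part 5) for the matching upper bound on $\Expect[(\taus-\Gamma)^+]^m$, then sandwich. The one subtlety you flag—that the threshold guaranteeing $\taus\in\Cc_\alpha$ is the same one appearing in the second line of \eqref{eq:noniid_UB2}—is indeed the only thing that needs checking, and it holds by construction, so the argument is complete.
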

A similar result can be concluded for the conditional moments as well.

\subsection{Performance analysis for i.i.d. model with geometric prior}
\label{sec:PerfAnaBayesIID}
We now present the second order asymptotic analysis of the Shiryaev algorithm for the i.i.d. model, provided in \cite{tart-veer-siamtpa-2005}
using the tools from nonlinear renewal theory introduced in Section~\ref{sec:renewalTheory}.

When the observations $\{X_n\}$ are i.i.d. conditioned on the change point, condition \eqref{eq:noniidCond_exist_q} is satisfied
and
\begin{equation*}
\frac{1}{t}\sum_{i=n}^{k+t} Y_i \to D(f_1\|  f_0) \ \ \ \text{ a.s. }\Prob_n \ \ \text{ when } t \to \infty  \ \ \forall n,
\end{equation*}
where $D(f_1\| f_0)$ is the K-L divergence between the densities $f_1$ and $f_0$ (see Definition~\ref{def:K-L-Divergence}).
From Thereom~\ref{thm:BayesianAsympVVV}, it follows that for $a_\alpha = \frac{1-\alpha}{\alpha}$,
\[\PFA(\taus) \leq \alpha.\]
Also, it is shown in \cite{tart-veer-siamtpa-2005} that if
\[0 < D(f_1 \| f_0) < \infty \ \ \ \ \text{ and } \ \ \  0 < D(f_0 \| f_1) < \infty,\]
then conditions \eqref{eq:noniidCond_nmoment_exists} and \eqref{eq:noniidCond_Avg_moment_exists} are satisfied, and hence
from Corollary~\ref{eq:ShiryaevnnoniddCorollary}
\begin{equation}
\label{eq:PerfShiryaev}
\inf_{\tau\in \Cc_\alpha} ~ \Expect[(\tau-\Gamma)^+]^m \sim \Expect[(\taus-\Gamma)^+]^m
\sim \left(\frac{|\log\alpha|}{D(f_1\|  f_0) + |\log(1-\rho)|}\right)^m \text{ as } \alpha \to 0.
\end{equation}
Note that the above statement provides the asymptotic delay performance of the
Shiryaev algorithm.
However, the bound for $\PFA$ can be quite loose and the first order expression for the $\ADD$ \eqref{eq:PerfShiryaev}
may not provide good estimate for $\ADD$ if the $\PFA$ is not very small.
In that case it is useful to have a second order estimate based on nonlinear renewal theory,
as obtained in \cite{tart-veer-siamtpa-2005}.

First note that \eqref{eq:Lambda_n} for the i.i.d. model will reduce to
\begin{eqnarray}
\label{eq:iidLambda_n}
\Lambda_{n+1} = \frac{1 + \Lambda_n}{1-\rho} L (X_{n+1}).
\end{eqnarray}
with $\Lambda_0=0$.
Now, let $Z_n = \log \Lambda_n$, and recall that
$Y_k = \log \frac{f_1(X_{k})}{f_0(X_{k})}$. Then, it can be shown that
\begin{eqnarray}
\label{eq:ZnEqSnEtan}
Z_n &=& \sum_{k=1}^n [Y_k + |\log (1-\rho)|] + \log\left(e^{Z_0} + \rho\right) + \sum_{k=1}^{n-1} \log\left(1 + e^{-Z_k}\rho\right) \nonumber\\
&=& \sum_{k=1}^n Y_k + n |\log (1-\rho)| + \eta_n.
\end{eqnarray}
Therefore the Shiryaev algorithm can be equivalently written as
\[\taus = \inf\{n\geq 1: Z_n \geq b\}.\]

We now show how the asymptotic overshoot distribution plays a key role in second order asymptotic analysis of $\PFA$ and $\ADD$.
Since, $p_{\taus} \geq A$ implies that $Z_{\taus}\geq \log \frac{A}{1-A} = \log a = b$, we have,
\[\frac{1}{1+e^{-Z_{\taus}}} \geq \frac{a}{1+a}.\]
\begin{eqnarray*}
\PFA(\taus) = \Expect[1-p_{\taus}] &= &\Expect\left[\frac{1}{1+e^{Z_{\taus}}}\right] \leq \Expect\left[e^{-Z_{\taus}}\right].\\
&=& \Expect\left[\frac{1}{e^{Z_{\taus}}} \frac{1}{1+e^{-Z_{\taus}}}\right]
\geq \Expect\left[\frac{1}{e^{Z_{\taus}}} \frac{a}{1+a}\right] = \Expect\left[e^{-Z_{\taus}}\right](1+o(1)) \text{ as } a \to \infty.
\end{eqnarray*}
Thus,
\begin{eqnarray*}
\PFA(\taus) &=& \Expect[e^{-Z_{{\taus}}}](1+o(1)) = e^{-b} \Expect[e^{-(Z_{{\taus}} - b)}](1+o(1)) \text{ as } b \ \to \infty.\\
&=& e^{-b} \Expect[e^{-(Z_{{\taus}} - b)}|  \tau \geq \Gamma](1+o(1)) \ \ \ \ \ \  \text{ as } b \ \to \infty.
\end{eqnarray*}
and we see that $\PFA$ is a function of the overshoot when $Z_n$ crosses $a$ from below.

Similarly,
\[Z_{\taus} = \sum_{k=1}^{{\taus}} Y_k + {\taus} |\log (1-\rho)| + \eta_{\taus}.\]
Following the developments in Section~\ref{sec:renewalTheory}, if the sequence $Y_n$ satisfies some additional
conditions, then we can write \footnote{As explained in \cite{tart-veer-siamtpa-2005}, this analysis is facilitated if we restrict to the worst-case detection delay, which is obtained by conditioning on the event that the change happens at time 1.}:
\begin{eqnarray*}
\Expect_1 [\tau]  &=& \frac{\Expect_1 [Z_\tau] -\Expect_1 [\eta_\tau]}{|\log (1-\rho)| + D(f_1\| f_0)} \\
               &=& \frac{b + \Expect_1 [Z_\tau-b] - \Expect_1 [\eta_\tau]}{|\log (1-\rho)| + D(f_1\| f_0)}.
\end{eqnarray*}

It is shown in \cite{tart-veer-siamtpa-2005} that $\eta_n$ is a slowly changing sequence,
and hence the distribution of $Z_{\taus}- b$, as $b\to \infty$,
is equal to the asymptotic distribution of the overshoot
when the random walk $\sum_{k=1}^n Y_k + n |\log (1-\rho)|$ crosses a large positive boundary.
We define the following quantities: the asymptotic overshoot distribution of a random walk
\begin{equation}\label{eq:ODistRx}
R(x) = \lim_{b\to \infty} \Prob\left(\sum_{k=1}^{\taus} Y_k + {\taus} |\log (1-\rho)| - b \leq x\right)
\end{equation}
its mean
\begin{equation}\label{eq:ODistMean}
\kappa = \int_0^\infty x dR(x)
\end{equation}
and its Laplace transform at 1
\begin{equation}\label{eq:ODistLaplace}
\zeta = \int_0^\infty e^{-x} dR(x).
\end{equation}

Also, the sequence $\{Y_n\}$ satisfies some additional conditions and hence the following results are true.
\begin{theorem}[\cite{tart-veer-siamtpa-2005}] \label{thm:sec_order}
If $\{Y_n\}$s are nonarithmetic then $\eta_n$ is a slowly changing sequence. Then by Theorem \ref{thm:NonlinearRenewalTheort}
\[
\lim_{b\to \infty} \Prob(Z_{\taus}-b \leq x) = R(x).
\]
This implies
\[
\PFA(\taus) \sim  \zeta \ e^{-b} \quad  \text{ as } b \ \to \infty.
\]
If in addition $\Expect[Y^2]<\infty$ then
\[
\Expect_1[\taus] = \frac{b + \kappa - \Expect_1[\eta]}{|\log (1-\rho)| + D(f_1\| f_0)} + o(1), \text{ as } b \ \to \infty.
\]
where $\eta$ is the a.s. limit of the sequence $\{\eta_n\}$.
\end{theorem}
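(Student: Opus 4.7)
The plan has three pieces: (i) verify that $\{\eta_n\}$ is a slowly changing sequence so that Theorem~\ref{thm:NonlinearRenewalTheort} can be invoked for the perturbed random walk $Z_n$, (ii) obtain the PFA asymptotics from the limiting overshoot distribution via the chain of bounds already sketched in the text just above the theorem, and (iii) derive the second-order expansion of $\Expect_1[\taus]$ via Wald's identity combined with the overshoot moments.

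For slow change, set $S_n = \sum_{k=1}^n Y_k + n|\log(1-\rho)|$, so that $Z_n = S_n + \eta_n$ by \eqref{eq:ZnEqSnEtan}. Observe that $\eta_n$ is non-negative and non-decreasing, so $Z_k \ge S_k$. Under $\Prob_1$ the SLLN gives $S_k/k \to D(f_1\|f_0) + |\log(1-\rho)| > 0$ a.s., hence $\sum_{k\ge 1} e^{-S_k} < \infty$ a.s. Using $\log(1+u)\le u$ gives $\eta_n \le \log(e^{Z_0}+\rho) + \rho\sum_{k\ge 1} e^{-S_k} < \infty$ a.s., so $\eta_n \nearrow \eta$ for some a.s.\ finite $\eta$. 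The a.s.\ convergence immediately delivers $n^{-1}\max_{k\le n}|\eta_k| \to 0$ (condition \eqref{eq:SlowlyCngCond1}), and the local oscillation bound \eqref{eq:SlowlyCngCond2} follows from the fact that the tail of a convergent series becomes uniformly small. Theorem~\ref{thm:NonlinearRenewalTheort} then yields $Z_{\taus}-b \Rightarrow R$ under $\Prob_1$.

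For the PFA, I would pick up the identity already derived in the text, $\PFA(\taus) = \Expect[e^{-Z_{\taus}}](1+o(1)) = e^{-b}\,\Expect[e^{-(Z_{\taus}-b)}](1+o(1))$. On $\{\taus<\infty\}$ one has $Z_{\taus}\ge b$, so $e^{-(Z_{\taus}-b)}\in[0,1]$, and bounded convergence converts the weak limit $Z_{\taus}-b \Rightarrow R$ into $\Expect[e^{-(Z_{\taus}-b)}] \to \int_0^\infty e^{-x}\,dR(x) = \zeta$, giving $\PFA(\taus) \sim \zeta\, e^{-b}$.

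For the ADD, I would apply Wald's identity to the random walk with increment $Y_k + |\log(1-\rho)|$ and stopping time $\taus$ (which has finite mean under $\Prob_1$ by Theorem~\ref{thm:BayesianAsympVVV}), combined with $Z_{\taus} = S_{\taus} + \eta_{\taus}$, to obtain
\[
\Expect_1[\taus]\bigl(D(f_1\|f_0)+|\log(1-\rho)|\bigr) = \Expect_1[Z_{\taus}] - \Expect_1[\eta_{\taus}] = b + \Expect_1[Z_{\taus}-b] - \Expect_1[\eta_{\taus}].
\]
Dividing through yields the claimed expansion, provided that $\Expect_1[Z_{\taus}-b]\to\kappa$ and $\Expect_1[\eta_{\taus}]\to\Expect_1[\eta]$. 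The main obstacle is precisely this upgrade from weak convergence to $L^1$ convergence of the overshoot: it is here that the hypothesis $\Expect[Y^2]<\infty$ together with the additional conditions (9.22)--(9.27) of \cite{sieg-seq-anal-book-1985} enter, securing uniform integrability of $Z_{\taus}-b$ and hence $\Expect_1[Z_{\taus}-b]\to\kappa$. The convergence $\Expect_1[\eta_{\taus}]\to\Expect_1[\eta]$ then follows from $\taus\to\infty$ a.s.\ as $b\to\infty$, the a.s.\ convergence $\eta_n\to\eta$, and the monotone domination $\eta_n \le \log(e^{Z_0}+\rho)+\rho\sum_{k\ge 1}e^{-S_k}$, whose right-hand side is integrable under $\Prob_1$.
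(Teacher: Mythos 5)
Your proposal follows the same route as the paper's own sketch: the PFA asymptotics come from sandwiching $\Expect[1-p_{\taus}]$ between $\Expect[e^{-Z_{\taus}}]$ and $\tfrac{a}{1+a}\Expect[e^{-Z_{\taus}}]$, and the ADD expansion comes from Wald's identity applied to the decomposition $Z_n = S_n + \eta_n$ together with nonlinear renewal theory. The added detail you supply on the slowly-changing verification is useful, but contains a small error worth correcting.

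With the Shiryaev initialization $\Lambda_0 = 0$, i.e.\ $Z_0 = -\infty$, the decomposition \eqref{eq:ZnEqSnEtan} gives $\eta_1 = \log(e^{Z_0}+\rho) = \log\rho < 0$. So $\eta_n$ is \emph{not} non-negative, and the inequality $Z_k \ge S_k$ you rely on is false at small $k$. What does hold is that $\eta_n$ is non-decreasing with $\eta_n \ge \eta_1 = \log\rho$, hence $Z_k \ge S_k + \log\rho$, giving $\rho e^{-Z_k} \le e^{-S_k}$ and therefore
\[
\eta_n - \log\rho \;\le\; \sum_{k\ge 1} e^{-S_k}.
\]
Since $\Expect_1\bigl[e^{-S_k}\bigr] = (1-\rho)^k$, the right side is $\Prob_1$-integrable and the a.s.\ monotone limit $\eta$, the slowly-changing property, and the dominated-convergence step in $\Expect_1[\eta_{\taus}]\to\Expect_1[\eta]$ all still go through; only your stated bound needs this adjustment.

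One further point you gloss over: the overshoot weak limit $Z_{\taus}-b \Rightarrow R$ furnished by Theorem~\ref{thm:NonlinearRenewalTheort} holds under $\Prob_1$, whereas $\PFA(\taus)=\Expect[1-p_{\taus}]$ is an \emph{unconditional} expectation averaging over $\Gamma$. The paper handles this by first writing $\PFA(\taus)\sim e^{-b}\,\Expect\!\left[e^{-(Z_{\taus}-b)}\,\middle|\,\taus\ge\Gamma\right]$ and then arguing that, because the geometric prior concentrates $\Gamma$ near the origin relative to the $O(b)$ size of $\taus$, the conditional overshoot law coincides asymptotically with the $\Prob_1$ one. Your bounded-convergence step is correct once this reduction is made explicit; as written it silently replaces $\Expect$ by $\Expect_1$.
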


In Table~\ref{Tab:ShiryaevPerf}, we compare the asymptotic expressions for $\PFA$ and $\ADD$ given in Theorem~\ref{thm:sec_order} with simulations. As can be seen in the table, the asymptotic expressions get more accurate as $\PFA$ approaches $0$.
\begin{table} [htbp]
 \centering
 \centering
 {
 \caption{$f_0 \sim {\cal N}(0,1)$, $f_1 \sim {\cal N}(1,1)$, $\rho=0.01$}
 \label{Tab:ShiryaevPerf}
\begin{tabular}{|l|l|l|l|l|l|l|l|l|l|l|l|l|}
\hline
&\multicolumn{2}{c|}{$\PFA$}&\multicolumn{2}{c|}{$\ADD$ }\\
\cline{1-5}
$b$ &   Simulations & Analysis     &   Simulations &   Analysis  \\
    &               & Theorem 3.3  &               &   Theorem 3.3 \\
\hline
1.386 & 1.22 $\times 10^{-1}$& 1.39 $\times 10^{-1}$& 6.93 & 10.31 \\
2.197 & 5.85 $\times 10^{-2}$& 6.19 $\times 10^{-2}$& 8.87 & 11.9 \\
4.595 & 5.61 $\times 10^{-3}$& 5.63 $\times 10^{-3}$& 13.9 & 16.6 \\
6.906 & 5.59 $\times 10^{-4}$& 5.58 $\times 10^{-4}$& 18.59 & 21.13 \\
11.512 & 5.6 $\times 10^{-6}$& 5.58 $\times 10^{-6}$& 27.64 & 30.16 \\
\hline
\end{tabular}}
 \end{table}

In Fig.~\ref{fig:Shiryaev_Tradeoff_slope} we plot the  $\ADD$ as a function of $\log(\PFA)$ for Gaussian observations. For a $\PFA$ constraint of $\alpha$ that is small, $b\approx |\log(\alpha)|$, and
\[
\ADD \approx \Expect_1[\taus] \approx \frac{|\log(\alpha)|}{|\log (1-\rho)| + D(f_1\| f_0)}
\]
giving a slope of $\frac{1}{|\log (1-\rho)| + D(f_1\| f_0)}$ to the trade-off curve.

When $|\log (1-\rho)| \ll D(f_1\| f_0)$, the observations contain more information about the change than the prior, and the tradeoff slope is roughly $\frac{1}{D(f_1\| f_0)}$. On the other hand,
when $|\log (1-\rho)| \gg D(f_1\| f_0)$,  the prior  contains more information about the change than the observations, and the tradeoff slope is roughly $\frac{1}{|\log (1-\rho)|}$. The latter asymptotic slope
is achieved by the stopping time that is based only on the prior:
\[
\tau = \inf\left\{ n\geq 1: \Prob (\Gamma > n)  \leq \alpha \right\}.
\]
This is also easy to see from \eqref{eq:recursionTake}. With $D(f_1\| f_0)$ small, $L(X)\approx 1$, and the recursion
for $p_k$ reduces to
\[
p_{n+1}=p_n + (1-p_n)\rho, \quad  p_0=0.
\]
Expanding we get $p_n = \rho \sum_{k=0}^{n-1} (1-\rho)^k$. The desired expression for the mean delay is obtained from the equation
$p_{\tau} = 1-\alpha$.
\begin{figure}[!t]
\center
\includegraphics[width=11cm, height=7cm]{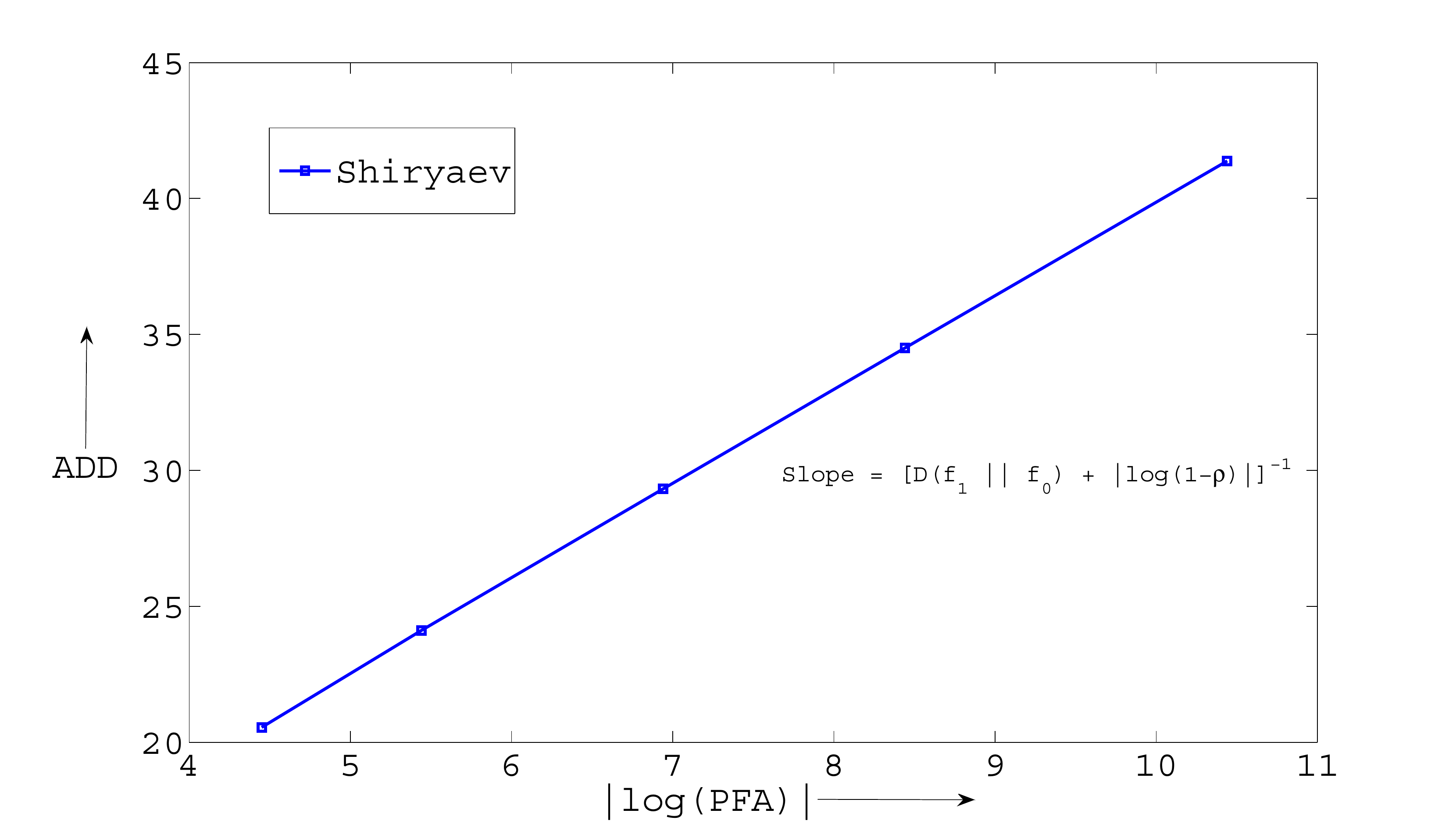}
\caption{$\ADD-\PFA$ trade-off curve for the Shiryaev algorithm: $\rho=0.01$, $f_0=\mathcal{N}(0,1)$, $f_1=\mathcal{N}(0.75,1)$}
\label{fig:Shiryaev_Tradeoff_slope}
\end{figure}

\section{Minimax quickest change detection}
\label{sec:MinimaxCent}
When the distribution of the change point is not known, we may model the change point as a deterministic but unknown positive integer $\gamma$. A number of heuristic algorithms have been developed in this setting. The earliest work is due to Shewhart \cite{shew-jamstaa-1925,shew-book-1931}, in which the log likelihood based on the current observation is compared with a threshold to make a decision about the change. The motivation for such a technique is based on the following fact:  if $X$ represents the generic random variable for the i.i.d. model with $f_0$ and $f_1$ as the pre- and post-change p.d.fs, then
\begin{equation}\label{eq:KLNumbers}
\Expect_\infty \left[\log L(X) \right]  = - D(f_0 \| f_1) < 0  \quad \text{and } \quad  \Expect_1\left[\log L(X) \right] =  D(f_1 \| f_0)  > 0.
\end{equation}
where as defined earlier $L(x) = f_1(x)/f_0(x)$, and $\Expect_\infty$ and $\Expect_1$ correspond to expectations when $\gamma=\infty$ and $\gamma=1$, respectively.
Thus,  after $\gamma$, the log likelihood of the observation $X$ is more likely to be above a given threshold.
Shewhart's method is widely employed in practice due to its simplicity; however, significant performance gain can be achieved by making use of past observations to make the decision about the change. Page \cite{page-biometrica-1954} proposed such an algorithm that uses past observations, which he called the CuSum algorithm.
The motivation for the CuSum algorithm is also based on \eqref{eq:KLNumbers}.
By the law of large numbers,
$\sum_{i=\gamma}^n \log L (X_i)$ grows to $\infty$ as $n \to \infty$.
Thus, if $S_n = \sum_{i=1}^n \log L (X_i)$ is the accumulated log likelihood sum, then before $\gamma$,
$S_n$ has a \emph{negative drift} and evolves towards $-\infty$. After $\gamma$,
$S_n$ has a \emph{positive drift} and climbs towards $\infty$. Therefore, intuition suggests the following algorithm that detects this change in drift, i.e.,
\begin{equation}\label{eq:PageCuSum_driftbased}
 \tauc = \inf\left\{n\geq 1: \left(S_n - \min_{1\leq k \leq n } S_k\right)   \geq b \right\}.
\end{equation}
where $b >0$.
Note that
\[
S_n - \min_{1\leq k \leq n } S_k
= \max_{0 \leq k \leq n }\sum_{i=k+1}^n \log L (X_i)
= \max_{1\leq k \leq n+1 }\sum_{i=k}^n \log L (X_i).
\]
Thus, $\tauc$ can be equivalently defined as follows.
\begin{algorithm}[CuSum algorithm]\label{algo:CuSum}
\begin{equation}\label{eq:CuSum}
\tauc = \inf\left\{n\geq 1: W_n \geq b \right\}.
\end{equation}
where
\begin{equation} \label{eq:Wn}
W_n = \max_{1\leq k \leq n+1 }\sum_{i=k}^n \log  L (X_i)
\end{equation}
\end{algorithm}
The statistic $W_n$ has the convenient recursion:
\begin{equation}\label{eq:CuSum_recursion}
W_{n+1} = {\left(W_n + \log L (X_{n+1})\right)}^+ , \quad W_0=0.
\end{equation}
It is this cumulative sum recursion that led Page to call $\tauc$ the CuSum algorithm.

The summation on the right hand side (RHS) of \eqref{eq:Wn} is assumed to take the value 0 when $k=n+1$. It turns out that one can get an algorithm that is equivalent to the above CuSum algorithm by removing the term $k=n+1$ in the maximization on the RHS of \eqref{eq:Wn}, to get the statistic:
\begin{equation} \label{eq:Cn}
C_n = \max_{1\leq k \leq n }\sum_{i=k}^n \log L (X_i)
\end{equation}
The statistic $C_n$ also has a convenient recursion:
\begin{equation}\label{eq:CuSum_recursion2}
C_{n+1} = \left(C_n \right)^+  + \log L (X_{n+1}) , \quad C_0=0,
\end{equation}
Note that unlike the statistic $W_n$, the statistic $C_n$ can be negative. Nevertheless it is easy to see that both $W_n$ and $C_n$ cross will cross a positive threshold $b$ at the same time (sample path wise) and hence the CuSum algorithm can be equivalently defined in terms of $C_n$ as:
\begin{equation}\label{eq:CuSumCn}
\tauc = \inf\left\{n\geq 1: C_n \geq b \right\}.
\end{equation}
An alternative way to derive the CuSum algorithm is through the maximum likelihood approach
i.e., to compare the likelihood of $\{\Gamma \leq n\}$ against $\{\Gamma > n\}$. Formally,
\begin{equation}\label{eq:PageCuSum_MLBased}
 \tauc = \inf\left\{n\geq 1: \frac{\max_{1\leq k \leq n } \prod_{i=1}^{k-1} f_0(X_i) \prod_{i=k}^n f_1(X_i)}{\prod_{i=1}^{n} f_0(X_i)} \geq B \right\}.
\end{equation}
Cancelling terms and taking log in \eqref{eq:PageCuSum_MLBased} gives us \eqref{eq:CuSumCn} with $b= \log B$.

See Fig.~\ref{fig:CuSum_ADD_PFA_Evolution} for a typical evolution of the CuSum algorithm.

Although, the CuSum algorithm was developed heuristically by Page \cite{page-biometrica-1954}, it was later shown in
\cite{lord-amstat-1971}, \cite{mous-astat-1986}, \cite{ritov-astat-1990} and \cite{lai-ieeetit-1998}, that it has
very strong optimality properties. In this section, we will study the CuSum and related algorithms from a fundamental viewpoint, and discuss how each of these algorithms is provably optimal with respect to a meaningful and useful optimization criterion.
\begin{figure}[!t]
\center
\includegraphics[width=11cm, height=7cm]{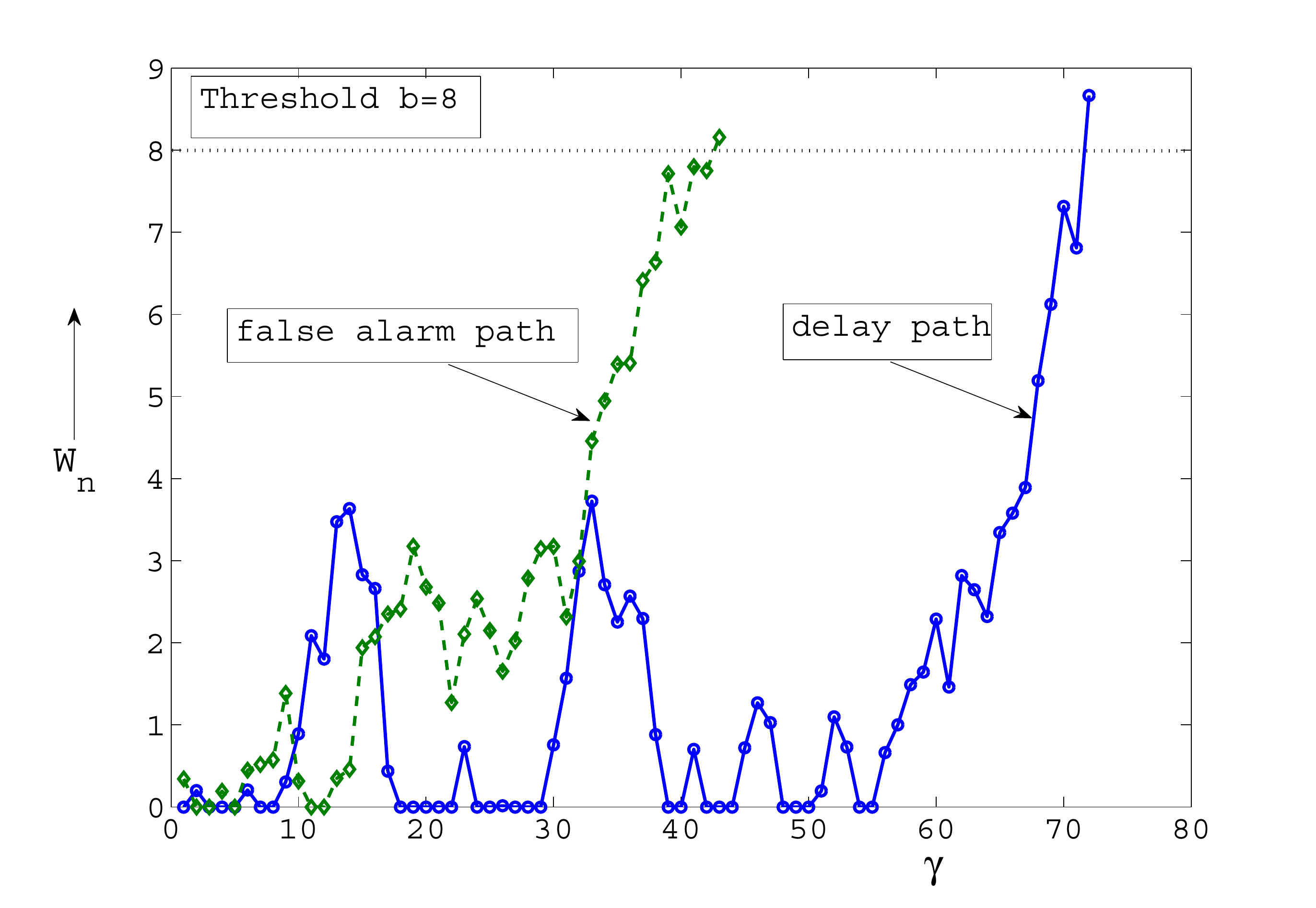}
\caption{Typical Evolution of the CuSum algorithm. Threshold $b=8$ and change point $\gamma=60$. }
\label{fig:CuSum_ADD_PFA_Evolution}
\end{figure}

Without a prior on the change point, a reasonable measure of false alarms is the mean time to false alarm, or its reciprocal, which is the false alarm rate ($\FAR$):
\begin{equation}\label{eq:FARDef}
 \FAR(\tau) = \frac{1}{\Expect_\infty[\tau]}.
\end{equation}
Finding a uniformly powerful test that minimizes the delay over all possible values of $\gamma$ subject to
a $\FAR$ constraint is generally not possible. Therefore it is more appropriate to study the quickest  change detection problem in a minimax setting in this case. There are two important minimax problem formulations, one  due to Lorden \cite{lord-amstat-1971} and the other due to Pollak \cite{poll-astat-1985}.

In Lorden's formulation, the objective is to minimize the supremum of the average delay conditioned on the worst possible realizations, subject to a constraint on the false alarm rate. In particular, if we define\footnote{Lorden defined $\WADD$ with $(\tau -n +1)^{+}$ inside the expectation, i.e., he assumed a delay penalty of 1 if the algorithm stops at the change point. We drop this additional penalty in our definition in order to be consistent with the other delay definitions in this chapter.}
\begin{equation}\label{eq:WADDdef}
\WADD(\tau) = \underset{n \geq 1}{\operatorname{\sup}}\ \text{ess sup} \ \Expect_n\left[(\tau-n)^+| X_1, \dots, X_{n-1}\right].
\end{equation}
and denote the set of stopping times that meet a constraint $\alpha$ on the $\FAR$ by
\begin{equation}\label{eq:DalphaDef}
\Dd_{\alpha}=\{\tau: \FAR(\tau) \leq \alpha\}
\end{equation}
we have the following problem formulation due to Lorden.
\begin{equation}
\label{eq:LordenProb2}
\text{\emph{Lorden's Problem:} For a given $\alpha$, find a stopping time $\tau \in \Dd_\alpha$  to minimize $\WADD (\tau)$.}
\end{equation}

%

For the i.i.d. setting, Lorden showed that the CuSum algorithm \eqref{eq:CuSum} is asymptotically optimal for Lorden's formulation \eqref{eq:LordenProb2} of as $\alpha\to 0$. It was later shown in  \cite{mous-astat-1986} and \cite{ritov-astat-1990}  that the CuSum algorithm is actually exactly optimal
for \eqref{eq:LordenProb2}. Although the CuSum algorithm enjoys such a strong optimality property under Lorden's formulation, it can be argued that $\WADD$ is a somewhat pessimistic measure of delay. A less pessimistic way to measure the delay was suggested by Pollak \cite{poll-astat-1985}:
\begin{equation}\label{eq:PollakCADDDef}
\CADD(\tau) = \underset{n \geq 1}{\operatorname{\sup}}\ \Expect_n[\tau-n| \tau\geq n].
\end{equation}
for all stopping times $\tau$ for which the expectation is well-defined.
\begin{lemma} \label{lemma:CADD_WADD}
\[
\WADD(\tau) \geq \CADD(\tau).
\]
\end{lemma}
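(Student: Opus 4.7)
The plan is to show the inequality for each fixed $n \geq 1$ and then take the supremum. Fix $n$ with $\Prob_n(\tau \geq n) > 0$ (otherwise $n$ contributes nothing to $\CADD$ since the conditional expectation is undefined). Because $\tau$ is a stopping time, the event $\{\tau \geq n\} = \{\tau < n\}^c$ lies in $\sigma(X_1, \ldots, X_{n-1})$. This is the crucial measurability observation that makes everything work.

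Next, I would use the defining property of essential supremum: for any random variable $Y$ and any event $A$ with positive probability, $\esssup Y \geq \Expect[Y \mid A]$. Applying this with $Y = \Expect_n[(\tau-n)^+ \mid X_1, \ldots, X_{n-1}]$ and $A = \{\tau \geq n\}$ gives
\[
\esssup \Expect_n[(\tau-n)^+ \mid X_1, \ldots, X_{n-1}] \;\geq\; \Expect_n\!\bigl[\Expect_n[(\tau-n)^+ \mid X_1, \ldots, X_{n-1}] \,\big|\, \tau \geq n\bigr].
\]
Since $\{\tau \geq n\} \in \sigma(X_1, \ldots, X_{n-1})$, the tower property collapses the right-hand side to $\Expect_n[(\tau-n)^+ \mid \tau \geq n]$. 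Finally, on the event $\{\tau \geq n\}$ we have $(\tau-n)^+ = \tau - n$, so this equals $\Expect_n[\tau - n \mid \tau \geq n]$.

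Taking the supremum over $n \geq 1$ on both sides yields $\WADD(\tau) \geq \CADD(\tau)$. The argument is short and the only delicate point is the measurability of $\{\tau \geq n\}$ with respect to $\sigma(X_1, \ldots, X_{n-1})$, which is exactly what is needed both to legitimately condition on this event inside the essential supremum bound and to pull the conditional expectation through the tower property. No additional assumptions on integrability or on the observation model are required beyond the ones implicit in the definitions of $\WADD$ and $\CADD$.
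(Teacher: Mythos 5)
Your proof is correct and follows essentially the same approach as the paper's: both hinge on the stopping-time fact that $\{\tau \geq n\} \in \sigma(X_1,\ldots,X_{n-1})$ and then bound the essential supremum by a conditional expectation over that event. You simply make explicit the two intermediate steps (the $\esssup Y \geq \Expect[Y\mid A]$ bound and the tower-property collapse) that the paper compresses into a single displayed inequality.
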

\begin{proof} Due to the fact that $\tau$ is a stopping time on $\{X_n\}$,
\[
\{\tau \geq n\} = \{\tau \leq n-1 \}^{c} \in \sigma (X_1, X_2, \ldots, X_{n-1})
\]
Therefore, for each $n$
\[
\text{ess sup} \ \Expect_n\left[(\tau-n)^+| X_1, \dots, X_{n-1}\right] \geq  \Expect_n[(\tau-n)^{+} | \tau\geq n] = \Expect_n[\tau-n| \tau\geq n]
\]
and the lemma follows.
\end{proof}

We now state Pollak's formulation of the problem that uses $\CADD$ as the measure of delay.
\begin{equation}
\label{eq:PollakProb3}
\text{\emph{Pollak's Problem:} For a given $\alpha$, find a stopping time $\tau \in \Dd_\alpha$  to minimize $\CADD (\tau)$.}
\end{equation}
Pollak's formulation has been studied in the i.i.d.\ setting in \cite{poll-astat-1985} and
\cite{tart-poll-polu-arxiv-2011}, where it is shown that algorithms based on the Shiryaev-Roberts statistic (to be defined later) are within a constant of the best possible performance over the class $\Dd_{\alpha}$, as $\alpha\to 0$.

Lai \cite{lai-ieeetit-1998}  studied both \eqref{eq:LordenProb2} and \eqref{eq:PollakProb3}  in a non-i.i.d. setting and
developed a general minimax asymptotic theory for these problems. In particular, Lai obtained a lower bound on $\CADD(\tau)$, and hence also on the $\WADD(\tau)$, for every stopping time in the class $\Dd_{\alpha}$, and showed that an extension of the CuSum algorithm for the non-i.i.d. setting achieves this lower bound asymptotically as $\alpha\to 0$.

In Section~\ref{sec:minimaxAlgo} we introduce a number of alternatives to the CuSum algorithm for minimax quickest change detection in
the i.i.d. setting that are based on the Bayesian Shiryaev algorithm. We then discuss the optimality properties of these algorithms in Section~\ref{sec:MinimaxOpt}.
While we do not discuss the exact optimality of the CuSum algorithm from \cite{mous-astat-1986} or \cite{ritov-astat-1990}, we briefly discuss the asymptotic optimality result from \cite{lord-amstat-1971}.
We also note that the asymptotic optimality of the CuSum algorithm for both \eqref{eq:LordenProb2} and \eqref{eq:PollakProb3} follows from the results in the non-i.i.d. setting of \cite{lai-ieeetit-1998}, which are summarized in Section~\ref{sec:LAI}.

\subsection{Minimax Algorithms Based on the Shiryaev Algorithm}
\label{sec:minimaxAlgo}

Recall that the Shiryaev algorithm is given by  (see \eqref{eq:ShiryaevRnrhoExpanded} and \eqref{eq:ShiryaevRnrho}):
\[
\taus = \inf\left\{ n\geq 1: R_{n,\rho} \geq a \right\}
\]
where
\[
R_{n,\rho} = \frac{1}{(1-\rho)^n} \sum_{k=1}^n (1-\rho)^{k-1} \prod_{i=k}^{n} L(X_i).
\]
Also recall that $R_{n,\rho}$ has the recursion:
\[
R_{n+1,\rho} = \frac{1+R_{n,\rho}}{1-\rho} L(X_{n+1}), \quad R_{0,\rho} = 0.
\]
Setting $\rho=0$ in the expression for  $R_{n,\rho}$ we get the Shiryaev-Roberts (SR)
statistic \cite{robe-technometrics-1966}:
\begin{equation} \label{eq:SRstat}
R_{n} = \sum_{k=1}^n \prod_{i=k}^{n} L(X_i)
\end{equation}
with the recursion:
\begin{equation} \label{eq:SRrec}
R_{n+1} = (1+R_n) L(X_{n+1}), \quad R_0 = 0.
\end{equation}

\begin{algorithm}[Shiryaev-Roberts $\mathrm{(SR)}$ Algorithm] \label{algo:SR}
\begin{equation}
\label{eq:SRalgo}
\tausr = \inf\left\{ n\geq 1: R_n \geq B, \ \ R_0=0 \right\}.
\end{equation}
\end{algorithm}
It is shown in \cite{poll-astat-1985} that the SR algorithm is the limit of a sequence of Bayes tests, and in that limit it is asymptotically Bayes efficient. Also, it is shown in \cite{tart-poll-polu-arxiv-2011} that the SR algorithm is {\em second order} asymptotically optimal for \eqref{eq:PollakProb3}, as $\alpha \to 0$, i.e., its delay is within a constant of the best possible delay over the class $\Dd_{\alpha}$. Further, in \cite{poll-tart-statsinica-2009}, the SR algorithm is shown to be exactly optimal with respect to a number of other interesting criteria.

It is also shown in \cite{poll-astat-1985} that a modified version of the SR algorithm, called the Shiryaev-Roberts-Pollak (SRP) algorithm, is {\em third order} asymptotically optimal for \eqref{eq:PollakProb3}, i.e., its delay is within a constant of the best possible delay over the class $\Dd_{\alpha}$, and the constant goes to zero as $\alpha \to 0$. To introduce the SRP algorithm, let $Q^{B}$ be the quasi-stationary distribution of the SR statistic $R_n$ above:
\[
\Prob(R_0 \leq x) = Q^{B}(x) = \lim_{n\to \infty} \Prob_0(R_n \leq x | \tausr>n).
\]
The new recursion, called the Shiryaev-Roberts-Pollak (SRP) recursion, is given by,
\[
R_{n+1}^B = (1+R_n^B) L(X_{n+1})
 \]
with $R_0^B$ is distributed according to $Q^{B}$.
\begin{algorithm}[Shiryaev-Roberts-Pollak $\mathrm{(SRP)}$ Algorithm]
\begin{equation}
\label{eq:SRPalgo}
\tausrp = \inf\left\{ n\geq 1: R_n^B \geq B \ \   \right\}.
\end{equation}
\end{algorithm}
Although the SRP algorithm is strongly asymptotically optimal for Pollak's formulation of \eqref{eq:PollakProb3}, in practice, it is difficult to compute the quasi-stationary distribution $Q^{B}$. A numerical framework for computing  $Q^{B}$ efficiently is provided in \cite{mous-polu-tart-statsin}. Interestingly, the following modification of the SR algorithm with a \emph{specifically
designed} starting point $R_0=r \geq 0$ is found to outperform the SRP procedure uniformly over all possible values of the change point \cite{tart-poll-polu-arxiv-2011}. This modification, referred to as the SR-$r$ procedure, has
the recursion:
\[
R_{n+1}^r = (1+R_n^r) L(X_{n+1}), \quad R_0 = r.
\]
\begin{algorithm}[Shiryaev-Roberts-$r$ (SR-$r$) Algorithm]
\begin{equation}
\label{eq:SRralgo}
\tausrr = \inf\left\{ n\geq 1: R_n^r \geq B \right\}.
\end{equation}
\end{algorithm}
It is shown in \cite{tart-poll-polu-arxiv-2011} that the SR-$r$ algorithm is also third order  asymptotically optimal for \eqref{eq:PollakProb3}, i.e., its delay is within a constant of the best possible delay over the class $\Dd_{\alpha}$, and the constant goes to zero as $\alpha \to 0$.
%

Note that for an arbitrary stopping time, computing the $\CADD$ metric \eqref{eq:PollakCADDDef} involves taking supremum over all possible change times, and computing the $\WADD$ metric \eqref{eq:WADDdef} involves another supremum over all possible past realizations of observations. While we can analyze the performance of the proposed algorithms through bounds and asymptotic approximations (as we will see in Section~\ref{sec:MinimaxOpt}, it is not obvious how one might evaluate $\CADD$ and $\WADD$ for a given algorithm in computer simulations. This is in contrast with the Bayesian setting, where $\ADD$ (see \eqref{eq:ADD_def}) can easily be evaluated in simulations by averaging over realizations of change point random variable $\Gamma$.

Fortunately, for the CuSum algorithm \eqref{eq:CuSum} and for the Shiryaev-Roberts algorithm \eqref{eq:SRalgo}, both $\CADD$ and $\WADD$ are easy to evaluate in simulations due to the following lemma.
\begin{lemma} \label{lemma:CADDeqWADD}
\begin{equation} \label{eq:CADDeqWADDCuSum}
\CADD (\tauc) = \WADD (\tauc) = \Expect_1 \left[ (\tauc - 1)\right].
\end{equation}
\begin{equation} \label{eq:CADDeqWADDSR}
\CADD (\tausr) = \WADD (\tausr) = \Expect_1 \left[ (\tausr - 1)\right].
\end{equation}
\end{lemma}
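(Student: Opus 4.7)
The plan is to exploit a pathwise monotonicity property that is enjoyed by both the CuSum and Shiryaev--Roberts recursions: in both cases, the statistic at time $k$ is a nondecreasing function of the initial value $W_0$ (respectively $R_0$), with the driving randomness $X_1,X_2,\ldots$ held fixed. For CuSum this follows by induction from the fact that $w\mapsto(w+y)^+$ is nondecreasing, and for SR from the fact that $r\mapsto (1+r)L(x)$ is nondecreasing since $L(x)\ge 0$. Consequently, the first passage time of the statistic to the threshold is a nonincreasing function of the initial value.

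Next, I would use the Markov structure to rewrite the inner quantity in the definition of $\WADD$. Under $\Prob_n$ the observations $X_n,X_{n+1},\ldots$ are i.i.d.\ with density $f_1$ and independent of $X_1,\ldots,X_{n-1}$; moreover, the whole future of the statistic is determined by $W_{n-1}$ (or $R_{n-1}$) together with these future observations. Hence there exists a deterministic function $\phi$ with
\[
\Expect_n\bl[(\tauc-n)^+\mid X_1,\ldots,X_{n-1}\br]=\phi(W_{n-1})\qquad\text{a.s.},
\]
and, by the monotonicity step above, $\phi$ is nonincreasing on $[0,\infty)$. Since $W_{n-1}\ge 0$, we get $\phi(W_{n-1})\le\phi(0)$ almost surely. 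But $\phi(0)$ is just the expected delay of a CuSum run started with $W_0=0$ on an i.i.d.\ $f_1$ input, which by a shift of the time index equals $\Expect_1[\tauc-1]$. This proves
\[
\WADD(\tauc)=\sup_{n\ge 1}\,\text{ess sup}\,\phi(W_{n-1})\le \Expect_1[\tauc-1].
\]
The identical argument, with $R$ in place of $W$ and $B$ in place of $b$, handles $\tausr$.

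For the reverse inequalities, I would simply specialize to $n=1$: since $W_0=0<b$ and $R_0=0<B$ the stopping times are at least $1$, the past sigma-algebra is trivial, and $(\tau-1)^+=\tau-1$, giving
\[
\WADD(\tau)\ge \Expect_1[(\tau-1)^+]=\Expect_1[\tau-1]\quad\text{for }\tau\in\{\tauc,\tausr\}.
\]
For $\CADD$ the upper bound $\CADD(\tau)\le\WADD(\tau)$ is Lemma~\ref{lemma:CADD_WADD}, while the lower bound again follows by taking $n=1$ in \eqref{eq:PollakCADDDef} and noting $\{\tau\ge 1\}$ has probability one. Chaining these gives $\Expect_1[\tau-1]\le\CADD(\tau)\le\WADD(\tau)\le\Expect_1[\tau-1]$, establishing both identities.

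The main obstacle is the monotonicity step, and in particular making the ``conditional expectation is a nonincreasing function of $W_{n-1}$'' reduction water-tight. The cleanest way is a coupling argument: fix the same realization of $X_n,X_{n+1},\ldots$ and compare the sample paths of the statistic started from two different initial values, using the pointwise monotonicity of the one-step update. Everything else is bookkeeping via the i.i.d.\ shift $X_{n+k}\mapsto X_{1+k}$ and the observation that on $\{\tau<n\}$ the term $(\tau-n)^+$ vanishes and so does not affect the supremum.
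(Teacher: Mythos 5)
Your proof is correct and follows essentially the same approach as the paper's: the paper's argument is precisely that $W_{n-1}\ge 0=W_0$ and that the delay to hit the threshold is largest when started from $0$, which is the monotonicity-in-initial-value observation you make explicit via coupling. You also streamline the bookkeeping by chaining $\Expect_1[\tau-1]\le\CADD(\tau)\le\WADD(\tau)\le\Expect_1[\tau-1]$ through Lemma~\ref{lemma:CADD_WADD}, whereas the paper argues $\WADD$ and $\CADD$ separately, but this is a cosmetic difference rather than a different route.
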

\begin{proof}
The CuSum statistic $W_n$ (see \eqref{eq:Wn}) has initial value $0$ and  remains non-negative for all $n$. If the change were to happen at some time $n > 1$,
then the pre-change statistic $W_{n-1}$ is greater than or equal  $0$, which equals the pre-change statistic if the change happens at $n=1$. Therefore, the delay for the CuSum statistic to cross a positive threshold $b$ is largest when the change happens at $n=1$, irrespective of the realizations of the observations, $X_1, X_2, \ldots, X_{n-1}$. Therefore
\[
\WADD (\tauc) = \underset{n \geq 1}{\operatorname{\sup}}\ \text{ess sup} \ \Expect_n \left[(\tauc-n)^+ | X_1, \dots, X_{n-1}\right] = \Expect_1 \left[ (\tauc - 1)^{+} \right] = \Expect_1 \left[ (\tauc - 1) \right] .
\]
and
\[
\CADD(\tauc) = \underset{n \geq 1}{\operatorname{\sup}}\ \Expect_n[\tau-n| \tauc \geq n] =  \Expect_1 \left[ (\tauc- 1) | \tauc \geq 1\right] = \Expect_1 \left[ (\tauc- 1) \right].
\]
This proves \eqref{eq:CADDeqWADDCuSum}. A similar argument can be used to establish \eqref{eq:CADDeqWADDSR}.
\end{proof}
Note that the above proof crucially depended on the fact that both the CuSum algorithm and the Shiryaev-Roberts algorithm start with the initial value of $0$.
Thus it is not difficult to see that Lemma~\ref{lemma:CADDeqWADD} does not hold for the SR-$r$ algorithm, unless of course $r=0$.
Lemma~\ref{lemma:CADDeqWADD} holds partially for the SRP algorithm since the initial distribution $Q^B$ makes the statistic $R_n^B$ stationary in $n$.
As a result $\Expect_n[\tausrp-n| \tausrp \geq n]$ is the same for every $n$.
However, as mentioned previously, $Q^B$ is difficult to compute in practice, and this makes the evaluation of $\CADD$ and $\WADD$ in simulations somewhat challenging.
\subsection{Optimality Properties of the Minimax Algorithms}
\label{sec:MinimaxOpt}
In this section we first show that the algorithms based on the Shiryaev-Roberts statistics, SR, SRP, and SR-$r$ are asymptotically optimal for Pollak's formulation of \eqref{eq:PollakProb3}. We need an important theorem that is proved in \cite{poll-tart-statsinica-2009}.
\begin{theorem}[\cite{poll-tart-statsinica-2009}]
\label{thm:SRLB}
If the threshold in the SR algorithm \eqref{eq:SRalgo} can be selected to meet the constraint $\alpha$ on $\FAR$ with equality, then
\[
\tausr =
 \underset{\tau\in \Dd_{\alpha}}{\operatorname{\arg \min}}
\frac{\sum_{n=1}^\infty \Expect_n[(\tau-n)^+]}{\Expect_\infty[\tau]}
\]
\end{theorem}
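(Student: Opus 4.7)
\emph{Plan.} The strategy is to rewrite both the numerator and denominator of the ratio as $\Prob_\infty$-expectations of the Shiryaev--Roberts statistic $R_n$, and then solve the resulting fractional optimization via a Dinkelbach/Lagrangian reduction to an unconstrained optimal stopping problem for the Markov chain $\{R_n\}$, whose optimal rule is of threshold form. I would begin by establishing the identity
\[
\sum_{n=1}^\infty \Expect_n[(\tau-n)^+] \;=\; \Expect_\infty\brcs{\sum_{k=1}^{\tau-1} R_k}
\]
via the change-of-measure relation $\Prob_n(\tau > k)=\Expect_\infty\bl\prod_{i=n}^{k} L(X_i)\,\indic_{\{\tau > k\}}\br$ (valid for $k\ge n$ since $\{\tau > k\}\in \sigma(X_1,\ldots,X_k)$), writing $(\tau-n)^+=\sum_{k=n}^\infty \indic_{\{\tau > k\}}$, and swapping the two summations so that the inner sum $\sum_{n=1}^k \prod_{i=n}^k L(X_i)$ collapses exactly to $R_k$. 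Next, from the recursion $R_{n+1}=(1+R_n)L(X_{n+1})$ and $\Expect_\infty[L(X)]=1$, the process $M_n := R_n - n$ is a zero-mean $\Prob_\infty$-martingale, so Doob's optional stopping theorem yields $\Expect_\infty[R_\tau]=\Expect_\infty[\tau]$ for $\tau\in\Dd_\alpha$ (which has $\Expect_\infty[\tau]=1/\FAR(\tau)<\infty$). Consequently the objective reduces to $\Expect_\infty\brcs{\sum_{k=1}^{\tau-1} R_k}/\Expect_\infty[R_\tau]$.

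Next I would invoke Dinkelbach's lemma for fractional minimization. Let $B_\alpha$ be the SR threshold attaining $\FAR(\tausr)=\alpha$ (guaranteed by hypothesis), and set $\mu^* := A(\tausr)/B(\tausr)$, where $A(\tau)=\Expect_\infty\brcs{\sum_{k=1}^{\tau-1}R_k}$ and $B(\tau)=\Expect_\infty[\tau]$. It suffices to show that $\tausr$ minimizes the Lagrangian $A(\tau)-\mu^* B(\tau)$ over \emph{all} stopping times, for then every $\tau\in\Dd_\alpha$ satisfies $A(\tau)\geq \mu^* B(\tau)$ and hence $A(\tau)/B(\tau)\geq \mu^*$. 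Using the martingale identity again,
\[
A(\tau)-\mu^*B(\tau) \;=\; -\mu^* + \Expect_\infty\brcs{\sum_{k=1}^{\tau-1}\brc{R_k-\mu^*}},
\]
which is an unconstrained Markov optimal-stopping problem with state $R_n$ and per-step cost $R_k - \mu^*$. A coupling argument---starting two copies from $r_1<r_2$ driven by the same innovations $L(X_{n+1})\geq 0$ and using $R_{n+1} = (1+R_n)L(X_{n+1})$ gives $R_n^{(1)}\leq R_n^{(2)}$ for all $n$ by induction---implies the Bellman value function $V(r)$ is monotone nondecreasing in $r$, so the continuation region is a lower interval $\set{r : r < B}$ and the optimal rule is the SR rule at some threshold $B$. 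Choosing $\mu^*$ so that this threshold coincides with $B_\alpha$ closes the argument.

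The step I expect to be the main obstacle is the optimal-stopping analysis: verifying that the Bellman equation admits a well-defined finite value function for the unbounded Markov chain $\{R_n\}$ (which grows linearly in mean under $\Prob_\infty$ since $\Expect_\infty[R_n]=n$), confirming that the coupling-based monotonicity of $V$ indeed produces a finite optimal threshold, and tuning $\mu^*$ so that this threshold matches $B_\alpha$. Justifying optional stopping for $M_n$ at $\tau\in\Dd_\alpha$ also requires a non-trivial truncation-plus-uniform-integrability argument (looking at $M_{\tau\wedge N}$ and passing to the limit), since $R_n$ is unbounded; this is standard but has to be done with care, especially to ensure $\Expect_\infty[R_\tau\indic_{\{\tau>n\}}]\to 0$.
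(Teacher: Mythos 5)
Your proposal is correct in its overall logic, but it takes a genuinely different route from the paper's. The paper proves the result by following Pollak's original program: the SR rule is shown to be a limit of Bayes-optimal rules $\tau_{\lambda,\rho}$ (with geometric prior parameter $\rho$) as $\rho\to 0$, and the key analytical fact invoked is that $(1-J_{\lambda,\rho}(\tau))/\rho \to \lambda^*\Expect_\infty[\tau] - \sum_{n}\Expect_n[(\tau-n)^+]$; the Lagrangian arises only in this limit, so the hard work is the uniform passage to the limit and verifying that the limit of Bayes tests is indeed the SR rule. You instead obtain the Lagrangian directly: the change-of-measure computation $\sum_n\Expect_n[(\tau-n)^+]=\Expect_\infty[\sum_{k=1}^{\tau-1}R_k]$ (correct, by writing $(\tau-n)^+=\sum_{k\ge n}\indic_{\{\tau>k\}}$, tilting on $\sigma(X_1,\dots,X_k)$, and noting $\sum_{n=1}^k\prod_{i=n}^kL(X_i)=R_k$), together with $\Expect_\infty[R_\tau]=\Expect_\infty[\tau]$ from the martingale $R_n-n$, reduces the ratio to a quotient of two $\Prob_\infty$-expectations of functionals of $\{R_n\}$, and then Dinkelbach plus a threshold-form optimal-stopping argument finishes. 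What the paper's route buys is that it rides on Shiryaev's exact Bayesian dynamic-programming result, so no separate optimal-stopping analysis of $\{R_n\}$ is needed; the price is the convergence-of-Bayes-tests machinery. What your route buys is structural transparency — it makes explicit that the criterion is a stationary average of the SR statistic itself before stopping, and it is self-contained once the optimal-stopping problem for the unbounded chain $\{R_n\}$ is handled. Your main acknowledged obstacle — finiteness of the Bellman value, monotonicity via the coupling $R_n^{(1)}\le R_n^{(2)}$ (which is correct), and tuning $\mu^*$ so the threshold of the Lagrangian solution hits $B_\alpha$ — is indeed where the real work lies; you would also need a monotone/continuity argument showing that as $\mu^*$ sweeps $[0,\infty)$ the optimal threshold sweeps $[0,\infty)$, so that the matching threshold exists. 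The optional-stopping step for $R_n-n$ at $\tau\in\Dd_\alpha$ is also delicate because $R_n$ is unbounded, but Fatou already gives $\Expect_\infty[R_\tau]\le\Expect_\infty[\tau]$, and in your argument an inequality in that direction can often be arranged to suffice once you track the signs carefully.
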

\begin{proof}
We give a sketch of the proof here. Note that
\begin{eqnarray*}
\sum_{n=1}^\infty \Expect_n[(\tau-n)^+] &=& \sum_{n=1}^\infty  \Expect_n[\tau-n| \tau \geq n] \Prob_\infty(\tau \geq n)\\
                                          &\leq & \CADD(\tau)  \sum_{n=1}^\infty \Prob_\infty(\tau \geq n)\\
                                          &=& \CADD(\tau) \Expect_\infty[\tau],
\end{eqnarray*}
and hence is well defined for all stopping times for which $\CADD$ and $\FAR$ exist.
The first part of the proof is to show that
\[
\sum_{n=1}^\infty \Expect_n[(\tausr-n)^+] = \underset{\tau\in \Dd_{\alpha}}{\operatorname{\min}} \sum_{n=1}^\infty \Expect_n[(\tau-n)^+].\]
This follows from the following result of \cite{poll-astat-1985}. If
\[
J_{\lambda,\rho}(\tau) = \min_{\tau} \Big(\Expect\left[(\tau - \Gamma)^+\right] + \lambda \; \Prob(\tau<\Gamma)\Big)\]
with $\Gamma$ having the geometric distribution of \eqref{eq:geom_prior} with parameter $\rho$, and
\begin{equation} \label{eq:lambdarho}
\tau_{\lambda,\rho} = \underset{\tau}{\operatorname{\arg \min}} J_{\lambda,\rho}(\tau).
\end{equation}
Then for a given $\tausr$ (with a given threshold),
there exists a sequence $\{(\lambda_i, \rho_i)\}$ and with $\lambda_i \to \lambda^*$ and $\rho_i \to 0$ such that $\tau_{\lambda_i,\rho_i}$ converge to $\tausr$, as $i \to \infty$.
Thus, the SR algorithm is the limit of a sequence of Bayes tests. Moreover,
\[
\underset{\rho \to 0, \lambda \to \lambda^*}{\operatorname{\lim \sup}}
\frac{1-J_{\lambda,\rho}(\tau_{\lambda,\rho})}{1-J_{\lambda,\rho}(\tausr)} = 1.
\]
By \eqref{eq:lambdarho}, for any stopping time $\tau$,  it holds that
\[
\frac{1-J_{\lambda,\rho}(\tau)}{1-J_{\lambda,\rho}(\tausr)} \leq \frac{1-J_{\lambda,\rho}(\tau_{\lambda,\rho})}{1-J_{\lambda,\rho}(\tausr)}.
\]
Now by taking the limit $\rho \to 0, \lambda \to \lambda^*$ on both sides, using the fact that
for any stopping time $\tau$ \cite{poll-tart-statsinica-2009}
\[
\frac{1-J_{\lambda,\rho}(\tau)}{\rho} \to \lambda^* \Expect_\infty[\tau] - \sum_{n=1}^\infty \Expect[(\tau-n)^+] \ \ \text{ as } \  \rho \to 0
\]
and using the hypothesis of the theorem that the $\FAR$ constraint can be met with equality by using $\tausr$, we have the desired result.

The next step in the proof is to show that it is enough to consider stopping times in the
class $\Dd_{\alpha}$ that meet the constraint of $\alpha$ with quality.
The result then follows easily from the fact that $\tausr$ is optimal with respect
to the numerator in $\frac{\sum_{n=1}^\infty \Expect_n[(\tau-n)^+]}{\Expect_\infty[\tau]}$.
\end{proof}

We now state the optimality proof for the procedures SR, SR-$r$ and SRP. We only provide an outline of the proof to illustrate the fundamental ideas behind the result.
\begin{theorem}[\cite{tart-poll-polu-arxiv-2011}]\label{thm:SRFamilyPerfOpt}
If $\Expect_1[\log\frac{f_1(X_n)}{f_0(X_n)}]^2 < \infty$, and $\log\frac{f_1(X_n)}{f_0(X_n)}$ is nonarithmetic then
\begin{enumerate}
\item \begin{equation} \label{eq:SRThirdOrderLB}
\underset{\tau\in \Dd_{\alpha}}{\operatorname{\inf}}\ \CADD(\tau) \geq \frac{|\log \alpha|}{D(f_1||f_0)} + \hat{\kappa} + o(1) \text{ as } \alpha\to 0
\end{equation}
where $\hat{\kappa}$ is a constant that can be characterized using renewal theory \cite{wood-nonlin-ren-th-book-1982}.
\item For the choice of threshold $B=\frac{1}{\alpha}$, $\FAR(\tausr)\leq \alpha$, and
\begin{equation}\label{eq:SRThirdOrder}
\begin{split}
\CADD(\tausr) =& \frac{|\log \alpha|}{D(f_1||f_0)} + \hat{\zeta} + o(1) \text{ as } \alpha\to 0 \\
=& \underset{\tau\in \Dd_{\alpha}}{\operatorname{\inf}}\ \CADD(\tau) + O(1) \text{ as } \alpha\to 0
\end{split}\end{equation}
where $\hat{\zeta}$ is again a constant that can be characterized using renewal theory \cite{wood-nonlin-ren-th-book-1982}.
\item There exists a choice for the threshold $B=B_\alpha$ such that $\FAR(\tausrp)\leq \alpha(1+o(1))$ and
\begin{equation}\label{eq:SRPThirdOrder}
\begin{split}
\CADD(\tausrp) =& \frac{|\log \alpha|}{D(f_1||f_0)} + \hat{\kappa} + o(1) \text{ as } \alpha\to 0 \\
=& \underset{\tau\in \Dd_{\alpha}}{\operatorname{\inf}}\ \CADD(\tau) + o(1) \text{ as } \alpha\to 0.
\end{split}\end{equation}
\item There exists a choice for the threshold $B=B_\alpha$ such that $\FAR(\tausrp)\leq \alpha(1+o(1))$ and a choice for
the initial point $r$ such that
\begin{equation}\label{eq:SRRThirdOrder}
\begin{split}
\CADD(\tausrr) =& \frac{|\log \alpha|}{D(f_1||f_0)} + \hat{\kappa} + o(1) \text{ as } \alpha\to 0 \\
=& \underset{\tau\in \Dd_{\alpha}}{\operatorname{\inf}}\ \CADD(\tau) + o(1) \text{ as } \alpha\to 0.
\end{split}\end{equation}
\end{enumerate}
\end{theorem}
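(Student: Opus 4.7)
\medskip
\noindent\textbf{Proof proposal.} My plan is to organize the argument into three layers: (i) a universal third-order lower bound on $\CADD$ over $\Dd_\alpha$ by a change-of-measure argument; (ii) $\FAR$ control for each of the three stopping rules by exploiting the $\Prob_\infty$-martingale structure of $R_n$; (iii) matching upper bounds on $\CADD$ by applying the nonlinear renewal theory of Theorem~\ref{thm:NonlinearRenewalTheort} to $\log R_n$, with the three procedures differing only through their initialization, which controls the third-order term.

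For part (1), I would adapt Lai's change-of-measure bound. Fix $\epp>0$ and set $L_\alpha=(1-\epp)|\log\alpha|/D(f_1\|f_0)$. For any $\tau\in\Dd_\alpha$ and any $n\ge 1$, write
\begin{equation*}
\Prob_n(n\le\tau<n+L_\alpha)=\Expect_\infty\!\left[\indic_{\{n\le\tau<n+L_\alpha\}}\prod_{i=n}^{\tau}L(X_i)\right],
\end{equation*}
and split the event according to whether $\max_{k\le L_\alpha}\sum_{i=n}^{n+k}\log L(X_i)$ stays below $(1+\epp)D(f_1\|f_0)L_\alpha$. The SLLN under $\Prob_n$ together with a Kolmogorov-type maximal inequality on the centered log-likelihood sums makes the ``bad'' set negligible, and on the ``good'' set the likelihood ratio is bounded by $e^{(1+\epp)D(f_1\|f_0)L_\alpha}=\alpha^{-(1-\epp^2)}$. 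Combined with $\Expect_\infty[\tau]\ge 1/\alpha$, one extracts an $n$ with $\Prob_n(\tau-n\ge L_\alpha\mid\tau\ge n)\to 1$, and Chebyshev yields $\CADD(\tau)\ge L_\alpha(1-o(1))$. To tighten this to the additive constant $\hat\kappa$ in \eqref{eq:SRThirdOrderLB}, I would refine the argument by tracking the overshoot of $\sum_{i=n}^{n+k}\log L(X_i)$ across the level $|\log\alpha|$ using Theorem~\ref{thm:ren_th2}, identifying $\hat\kappa$ as a functional of the asymptotic ladder-height distribution of the log-likelihood random walk.

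For part (2), the crucial observation is that $\Expect_\infty[L(X)]=1$, so from the recursion $R_{n+1}=(1+R_n)L(X_{n+1})$ one gets $\Expect_\infty[R_{n+1}\mid X_1^n]=1+R_n$, i.e., $\{R_n-n\}$ is a $\Prob_\infty$-martingale started at $0$. By Doob's optional stopping (Theorem~\ref{thm:DoobOptSamp}), applied after a standard truncation argument, $\Expect_\infty[\tausr]=\Expect_\infty[R_{\tausr}]\ge B$, so $B=1/\alpha$ gives $\FAR(\tausr)\le\alpha$. For the SR-$r$ and SRP procedures the same martingale gives $\Expect_\infty[\tau]\ge B-r$ and $\Expect_\infty[\tausrp]\ge B-\Expect[R_0^B]$ respectively, so one can pick $B=B_\alpha=\alpha^{-1}(1+o(1))$. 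For the delay analysis I would decompose
\begin{equation*}
\log R_n=\sum_{i=1}^n\log L(X_i)+\eta_n,\qquad \eta_n=\log\!\left(e^{-S_n}+\sum_{k=2}^n e^{-S_{k-1}}\right),
\end{equation*}
with $S_k=\sum_{i=1}^k\log L(X_i)$. Under $\Prob_1$, the terms $e^{-S_{k-1}}$ form a nonnegative $\Prob_1$-supermartingale, so $\eta_n$ converges a.s. to a finite $\eta_\infty$; the slowly-changing conditions \eqref{eq:SlowlyCngCond1}–\eqref{eq:SlowlyCngCond2} then follow from this convergence. Applying Theorem~\ref{thm:NonlinearRenewalTheort} with $Y_i=\log L(X_i)$ and threshold $b=\log B$ yields
\begin{equation*}
\Expect_1[\tausr]=\frac{b+\kappa-\Expect_1[\eta_\infty]}{D(f_1\|f_0)}+o(1)\text{ as }b\to\infty,
\end{equation*}
where $\kappa$ is the mean of the limiting overshoot of the random walk $\{S_n\}$. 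Since $\tausr$ starts at $0$, Lemma~\ref{lemma:CADDeqWADD} gives $\CADD(\tausr)=\Expect_1[\tausr-1]$, which after substituting $b\sim|\log\alpha|$ produces \eqref{eq:SRThirdOrder} with $\hat\zeta=\kappa-\Expect_1[\eta_\infty]-1$; comparison with the lower bound gives the $O(1)$ gap.

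For parts (3) and (4), the same expansion of $\log R_n^B$ or $\log R_n^r$ applies, but the initial condition shifts $\eta_\infty$ by a deterministic/stochastic amount. For SRP, the quasi-stationary initialization makes $R_n^B$ stationary under $\Prob_\infty$, and a direct computation shows that the shift in $\Expect_1[\eta_\infty]$ is exactly what is needed to make $\hat\zeta=\hat\kappa$, closing the $O(1)$ gap to $o(1)$. For SR-$r$, I would show that the perturbation $r$ enters the delay through a continuous, strictly monotone function $r\mapsto\Expect_1[\eta_\infty(r)]$, and choose $r=r_\alpha$ to match $\hat\kappa$ exactly; the existence of such $r$ follows from an intermediate-value argument on the range of the shift. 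The main obstacle I anticipate is the third-order identification of $\hat\kappa$ in the lower bound and verifying that the nonlinear-renewal approximation is uniform in $n$ on the supremum defining $\CADD$, which requires checking the auxiliary conditions ((9.22)–(9.27) in \cite{sieg-seq-anal-book-1985}) alluded to in Theorem~\ref{thm:NonlinearRenewalTheort}, together with the nonarithmetic and second-moment hypotheses on $\log L(X)$.
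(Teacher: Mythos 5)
Your upper-bound machinery is essentially right, but your route to the third-order lower bound in part (1) has a genuine gap, and it is exactly where the paper's argument does something quite different.

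You propose to prove \eqref{eq:SRThirdOrderLB} by adapting Lai's change-of-measure lower bound: pick $L_\alpha=(1-\epp)|\log\alpha|/D(f_1\|f_0)$, show $\Prob_n(\tau-n\ge L_\alpha\mid\tau\ge n)\to 1$ uniformly over $\Dd_\alpha$, and then apply Chebyshev. That argument gives
\[
\Expect_n[\tau-n\mid\tau\ge n]\ \ge\ L_\alpha\,\Prob_n(\tau-n\ge L_\alpha\mid\tau\ge n)\ =\ L_\alpha\bigl(1-o(1)\bigr),
\]
and since $L_\alpha\sim|\log\alpha|/D(f_1\|f_0)$, the error is \emph{multiplicative}: you obtain $\frac{|\log\alpha|}{D(f_1\|f_0)}\bigl(1+o(1)\bigr)$, i.e.\ a lower bound that is off by $o(|\log\alpha|)$, not by $o(1)$. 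Your suggested ``refinement via overshoots and Theorem~\ref{thm:ren_th2}'' does not close this: the Chebyshev step itself, $\Expect\ge L_\alpha\Prob$, loses the additive constant regardless of how precisely one tracks the overshoot of the log-likelihood random walk. This is precisely why the first-order Lorden/Lai lower bound in Theorem~\ref{thm:Lai1998Proof} carries a $(1+o(1))$ and not a $+\,\hat\kappa+o(1)$.

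The paper obtains the third-order lower bound by a completely different mechanism, which is the key idea you are missing. It first proves the pointwise inequality
\[
\CADD(\tau)\ \ge\ \frac{\sum_{n=1}^\infty \Expect_n[(\tau-n)^+]}{\Expect_\infty[\tau]}
\]
for every stopping time (write $\Expect_\infty[\tau]=\sum_j\Prob_\infty(\tau\ge j)$ in the denominator and bound each term in the numerator by the supremum). Then Theorem~\ref{thm:SRLB} (from \cite{poll-tart-statsinica-2009}) says $\tausr$ is the \emph{exact} minimizer of the right-hand side over $\Dd_\alpha$ when its threshold is set to meet the $\FAR$ constraint with equality. Therefore
\[
\inf_{\tau\in\Dd_\alpha}\CADD(\tau)\ \ge\ \frac{\sum_{n=1}^\infty \Expect_n[(\tausr-n)^+]}{\Expect_\infty[\tausr]},
\]
and the additive constant $\hat\kappa$ then comes out of a \emph{second-order} nonlinear-renewal expansion of this last quantity, not of a change-of-measure probability estimate. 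In other words, the sharpness that lets you get an additive constant in the lower bound is inherited from the exact (nonasymptotic) optimality of $\tausr$ for the stationary/integral risk; the Chebyshev route has no analogue of this.

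The rest of your plan lines up reasonably well with the paper. Your $\FAR$ argument (martingale $R_n-n$ under $\Prob_\infty$, optional stopping, $\Expect_\infty[\tausr]=\Expect_\infty[R_{\tausr}]\ge B$) is the paper's argument. Your nonlinear-renewal treatment of $\log R_n = S_n + \eta_n$ with $\eta_n$ slowly changing is also the right tool, with a small algebra slip: $R_n=e^{S_n}\sum_{k=1}^n e^{-S_{k-1}}$, so $\eta_n=\log\bigl(1+\sum_{k=2}^n e^{-S_{k-1}}\bigr)$, not the expression you wrote with $e^{-S_n}$ inside. One more caution on parts (3)–(4): Lemma~\ref{lemma:CADDeqWADD} holds for $\tausr$ and (by stationarity) for $\tausrp$, but it does not hold for $\tausrr$ with $r>0$, so for SR-$r$ you cannot reduce $\CADD$ to $\Expect_1[\cdot]$; you need a second-order expansion of $\Expect_n[\tausrr-n\mid\tausrr\ge n]$ that is uniform in $n$, which is nontrivial and is one reason the paper defers those details to \cite{tart-poll-polu-arxiv-2011}.
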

\begin{proof}
To prove that the above mentioned choice of thresholds meets the $\FAR$ constraint, we note that $ \{R_n^r-n-r\} $ is  a martingale. Specifically, $ \{R_n-n\} $ is a martingale and the conditions of theorem \ref{thm:DoobOptSamp} are satisfied  \cite{poll-astat-1987}. Hence,
\[
\Expect_\infty[R_{\tausr}-\tausr]=0.
\]
Since, $\Expect_\infty[R_{\tausr}] \geq B$, for $B=\frac{1}{\alpha}$ we have
\[
\FAR(\tausr) = \frac{1}{\Expect_\infty[R_{\tausr}]} \leq \frac{1}{B} = \alpha.
\]
For a description of how to set the thresholds for $\tausrr$ and $\tausrp$, we refer the reader to
\cite{tart-poll-polu-arxiv-2011}.

The proofs of the delay expressions for all the algorithms have a common theme. The first part of these proofs is based on Theorem \ref{thm:SRLB}. We first show that $\frac{\sum_{n=1}^\infty \Expect_n[(\tau-n)^+]}{\Expect_\infty[\tau]}$ is a lower bound to $\CADD(\tau)$.
\begin{eqnarray*}
\CADD(\tau) = \sup_n \Expect_n[\tau-n| \tau\geq n] &=& \frac{\sum_{j=1}^\infty \sup_n \Expect_n[\tau-n| \tau\geq n] \Prob_\infty(\tau\geq j)}{\Expect_\infty[\tau]}\\
                                   &\geq & \frac{\sum_{j=1}^\infty \Expect_j[\tau-j| \tau\geq j] \Prob_\infty(\tau\geq j)}{\Expect_\infty[\tau]}\\
                                   &=& \frac{\sum_{j=1}^\infty \Expect_j[(\tau-j)^+]}{\Expect_\infty[\tau]}.
\end{eqnarray*}
From Theorem \ref{thm:SRLB}, since $\tausr$ is optimal with respect to minimizing $\frac{\sum_{n=1}^\infty \Expect_n[(\tau-n)^+]}{\Expect_\infty[\tau]}$,
we have
\[
\underset{\tau\in \Dd_{\alpha}}{\operatorname{\inf}}\ \CADD(\tau) \geq \frac{\sum_{n=1}^\infty \Expect_n[(\tausr-n)^+]}{\Expect_\infty[\tausr]}.
\]
The next step is to use nonlinear renewal theory (see Section~\ref{sec:renewalTheory}) to obtain a second order approximation for the right hand side of the above equation, as we did  for the Shiryaev procedure in Section~\ref{sec:PerfAnaBayesIID}:
\[
\frac{\sum_{n=1}^\infty \Expect_n[(\tausr-n)^+]}{\Expect_\infty[\tausr]} = \frac{|\log \alpha|}{D(f_1||f_0)} + \hat{\kappa} + o(1) \text{ as } \alpha\to 0.
\]
The final step is to show that the $\CADD$ for the SR-$r$ and SRP procedures are within $o(1)$, and the $\CADD$ for SR procedure is within $O(1)$ of this lower bound \eqref{eq:SRThirdOrderLB}. This is done by obtaining second order approximations using nonlinear renewal theory for the $\CADD$ of SR, SRP, SR-$r$ procedures, and get \eqref{eq:SRThirdOrder}, \eqref{eq:SRPThirdOrder} and \eqref{eq:SRRThirdOrder}, respectively.
\end{proof}

It is shown in \cite{poll-tart-statsinica-2009} that $\frac{\sum_{n=1}^\infty \Expect_n[(\tau-n)^+]}{\Expect_\infty[\tau]}$ is also equivalent to the average delay when the change happens at a ``far horizon":  $\gamma\to\infty$. Thus, the SR algorithm is also optimal with respect to that criterion.

The following corollary follows easily from the above two theorems. Recall that in the minimax setting,
an algorithm is called third order asymptotically
optimum if its delay is within an $o(1)$ term of the best possible, as the $\FAR$ goes to zero.
An algorithm is called second order asymptotically
optimum if its delay is within an $O(1)$ term of the best possible, as the $\FAR$ goes to zero.
And an algorithm is called first order asymptotically optimal
if the ratio of its delay with the best possible goes to 1, as the $\FAR$ goes to zero.
\begin{corrly}
Under the conditions of Theorem \ref{thm:SRLB}, the SR-$r$ \eqref{eq:SRralgo} and the SRP \eqref{eq:SRPalgo} algorithms are third order asymptotically optimum, and
the SR algorithm is second order asymptotically optimum for the Pollak's formulation \eqref{eq:PollakProb3}. Furthermore, using the choice of thresholds specified in Theorem \ref{thm:SRLB} to meet the $\FAR$ constraint of $\alpha$, the asymptotic performance of all three algorithms are equal up to first order:
\[
\CADD (\tausr) \sim \CADD (\tausrp) \sim \CADD(\tausrr) \sim \frac{|\log \alpha|}{D(f_1 || f_0)}.
\]
\end{corrly}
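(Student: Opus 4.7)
My plan is to observe that the corollary is essentially a repackaging of the statements already established in Theorem~\ref{thm:SRFamilyPerfOpt}, using only the definitions of first, second, and third order asymptotic optimality. I would begin by explicitly recalling these definitions: a procedure is third order optimum if its $\CADD$ differs from $\inf_{\tau\in\Dd_\alpha}\CADD(\tau)$ by $o(1)$, second order optimum if the gap is $O(1)$, and first order optimum if the ratio to the infimum tends to $1$, all in the regime $\alpha \to 0$.

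Next I would directly compare the asymptotic expansions from parts~2)--4) of Theorem~\ref{thm:SRFamilyPerfOpt} with the lower bound \eqref{eq:SRThirdOrderLB}. For the SRP procedure we have
\[
\CADD(\tausrp) - \inf_{\tau\in\Dd_\alpha}\CADD(\tau) = \Bl(\tfrac{|\log \alpha|}{D(f_1\|f_0)} + \hat{\kappa} + o(1)\Br) - \Bl(\tfrac{|\log \alpha|}{D(f_1\|f_0)} + \hat{\kappa} + o(1)\Br) = o(1),
\]
so $\tausrp$ is third order optimum; the same cancellation of the leading term and of the constant $\hat{\kappa}$ gives third order optimality of $\tausrr$ via \eqref{eq:SRRThirdOrder}. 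For $\tausr$, \eqref{eq:SRThirdOrder} shows that the constants $\hat{\zeta}$ and $\hat{\kappa}$ may differ, so only the difference $\hat{\zeta}-\hat{\kappa}=O(1)$ survives, yielding second order optimality. A small subtlety I would flag is checking that the $\FAR$ constraint is actually met under the stated threshold choices, which for $\tausr$ follows from Doob's optional stopping applied to the martingale $\{R_n-n\}$ (as used in the proof of Theorem~\ref{thm:SRFamilyPerfOpt}), and for $\tausrp, \tausrr$ from the corresponding threshold prescriptions referenced in \cite{tart-poll-polu-arxiv-2011}.

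Finally, for the first order equivalence of all three $\CADD$ to $|\log\alpha|/D(f_1\|f_0)$, I would simply note that since $|\log\alpha|\to\infty$ as $\alpha\to 0$ and the additive constants $\hat{\kappa},\hat{\zeta}$ are finite, dividing the asymptotic expressions by $|\log\alpha|/D(f_1\|f_0)$ and letting $\alpha\to 0$ gives the limit $1$ in each case. Formally,
\[
\frac{\CADD(\tausr)}{|\log\alpha|/D(f_1\|f_0)} = 1 + \frac{D(f_1\|f_0)(\hat{\zeta}+o(1))}{|\log\alpha|} \to 1,
\]
and the same argument applies to $\tausrp$ and $\tausrr$ with $\hat{\kappa}$ in place of $\hat{\zeta}$, establishing the chain of $\sim$-relations in the display.

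I do not expect a genuine obstacle here: the entire content of the corollary is a bookkeeping exercise on top of Theorem~\ref{thm:SRFamilyPerfOpt}, so the main thing to get right is the careful matching of the ``second/third order'' terminology to the $O(1)$ and $o(1)$ bounds in the theorem, together with the observation that a bounded additive term is absorbed at first order because the leading term $|\log\alpha|/D(f_1\|f_0)$ diverges.
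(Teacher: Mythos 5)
Your proposal is correct and follows essentially the same route as the paper, which simply states that the corollary "follows easily from the above two theorems"; the details you write out (reading off the $O(1)$ and $o(1)$ gaps from parts~2)--4) of Theorem~\ref{thm:SRFamilyPerfOpt} against the lower bound \eqref{eq:SRThirdOrderLB}, then dividing by $|\log\alpha|/D(f_1\|f_0)$ for the first-order equivalence) are precisely the bookkeeping the paper leaves implicit. One minor remark: the paper's corollary references Theorem~\ref{thm:SRLB} for the threshold choices and hypotheses, but as you correctly noticed, the expansions and threshold prescriptions actually come from Theorem~\ref{thm:SRFamilyPerfOpt}, so your explicit invocation of that theorem is the right reading.
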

Furthermore, by Lemma~\ref{lemma:CADDeqWADD}, we also have:
\[
\WADD (\tausr) \sim  \frac{|\log \alpha|}{D(f_1 || f_0)}.
\]

In \cite{lord-amstat-1971} the asymptotic optimality of the CuSum algorithm \eqref{eq:CuSum} as $\alpha\to 0$ is established for Lorden's formulation of \eqref{eq:LordenProb2}.
First, ergodic theory is used to show that choosing the threshold $b=|\log \alpha|$ ensures $\FAR(\tauc) \leq \alpha$.
For the above choice of threshold $B=|\log \alpha|$, it is shown that
\[\WADD(\tauc) \leq \frac{|\log \alpha|}{D(f_1\| f_0)}(1 + o(1)) \text{ as } \alpha\to 0.\]
Then the exact optimality of the SPRT \cite{wald-wolf-amstat-1948} is used to find a lower bound on the $\WADD$
of the class $\Dd_{\alpha}$:
\[
\underset{\tau\in \Dd_{\alpha}}{\operatorname{\inf}}\ \WADD(\tau) \geq \frac{|\log \alpha|}{D(f_1\| f_0)} (1+o(1))\text{ as } \alpha\to 0.
\]
These arguments establish the first order asymptotic optimality of the CuSum algorithm  for Lorden's formulation.
Futhermore, as we will see later in Theorem~\ref{thm:Lai1998Proof}, Lai \cite{lai-ieeetit-1998} showed that:
\[
\underset{\tau\in \Dd_{\alpha}}{\operatorname{\inf}}\ \WADD(\tau)
     \geq \underset{\tau\in \Dd_{\alpha}}{\operatorname{\inf}}\ \CADD(\tau) \geq \frac{|\log \alpha|}{I}\Big(1+o(1)\Big).
 \]
Thus by Lemma~\ref{lemma:CADDeqWADD}, the first order asymptotic optimality of the CuSum algorithm extends to Pollak's formulation \eqref{eq:PollakProb3}, and we have the following result.
\begin{corrly}
The CuSum algorithm \eqref{eq:CuSum} with threshold $b=|\log \alpha|$ is first order asymptotically optimum for both Lorden's and Pollak's formulations. Furthermore,
\[
\CADD (\tauc) = \WADD (\tauc) \sim  \frac{|\log \alpha|}{D(f_1 || f_0)}.
\]
\end{corrly}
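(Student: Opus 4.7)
The plan is to assemble the corollary entirely from results already in the paper, namely Lorden's upper bound for $\WADD(\tauc)$, Lai's lower bound on $\CADD$ over $\Dd_\alpha$ (which will be stated in Theorem~\ref{thm:Lai1998Proof}), Lemma~\ref{lemma:CADD_WADD}, and Lemma~\ref{lemma:CADDeqWADD}. No new computation is needed; the argument is a sandwich.

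First I would verify that the chosen threshold meets the false alarm constraint. Lorden's original argument (discussed in the paragraph immediately preceding the corollary) shows that $b=|\log\alpha|$ yields $\FAR(\tauc)\le\alpha$, so $\tauc\in\Dd_\alpha$. Then I would quote Lorden's upper bound
\[
\WADD(\tauc) \le \frac{|\log\alpha|}{D(f_1\|f_0)}\,(1+o(1))\quad\text{as } \alpha\to 0,
\]
which is also stated in that paragraph. By Lemma~\ref{lemma:CADDeqWADD}, $\WADD(\tauc)=\CADD(\tauc)=\Expect_1[\tauc-1]$, so the same upper bound automatically applies to $\CADD(\tauc)$.

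Next I would produce the matching lower bound. By Lemma~\ref{lemma:CADD_WADD}, any stopping time $\tau$ satisfies $\WADD(\tau)\ge\CADD(\tau)$, so
\[
\inf_{\tau\in\Dd_\alpha}\WADD(\tau)\;\ge\;\inf_{\tau\in\Dd_\alpha}\CADD(\tau).
\]
Invoking Lai's lower bound (Theorem~\ref{thm:Lai1998Proof} and the displayed inequality just above the corollary),
\[
\inf_{\tau\in\Dd_\alpha}\CADD(\tau)\;\ge\;\frac{|\log\alpha|}{D(f_1\|f_0)}\,(1+o(1))\quad\text{as } \alpha\to 0,
\]
gives the same lower bound for both criteria.

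Combining the upper and lower bounds yields
\[
\CADD(\tauc)\;=\;\WADD(\tauc)\;\sim\;\frac{|\log\alpha|}{D(f_1\|f_0)},
\]
and since this matches the infimum over $\Dd_\alpha$ for both $\CADD$ and $\WADD$ to first order, the CuSum procedure with $b=|\log\alpha|$ is first-order asymptotically optimal under both Pollak's and Lorden's formulations. There is no real obstacle here: every ingredient has already been proved or cited earlier in the chapter, and the only delicate point is to be sure that Lai's lower bound applies in the i.i.d.\ setting with $I=D(f_1\|f_0)$, which is precisely how the theorem specializes.
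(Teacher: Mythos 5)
Your proof is correct and follows essentially the same route as the paper: Lorden's FAR bound and $\WADD$ upper bound at threshold $b=|\log\alpha|$, the identity $\CADD(\tauc)=\WADD(\tauc)$ from Lemma~\ref{lemma:CADDeqWADD}, and the lower bound $\inf_{\tau\in\Dd_\alpha}\WADD(\tau)\ge\inf_{\tau\in\Dd_\alpha}\CADD(\tau)\ge |\log\alpha|/D(f_1\|f_0)\,(1+o(1))$ from Theorem~\ref{thm:Lai1998Proof} specialized to the i.i.d.\ case with $I=D(f_1\|f_0)$. The only cosmetic difference is that the paper also mentions Lorden's original SPRT-based lower bound on $\inf\WADD$ for historical context before invoking Lai's stronger $\CADD$ bound, whereas you go straight to Lai's result; the sandwich argument is identical.
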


In Fig.~\ref{fig:CuSum_Tradeoff_slope}, we plot the trade-off curve for the CuSum algorithm, i.e., plot $\CADD$
as a function of $-\log \FAR$. Note that the curve has a slope of $1/D(f_1 || f_0)$.
\begin{figure}[!t]
\center
\includegraphics[width=11cm, height=7cm]{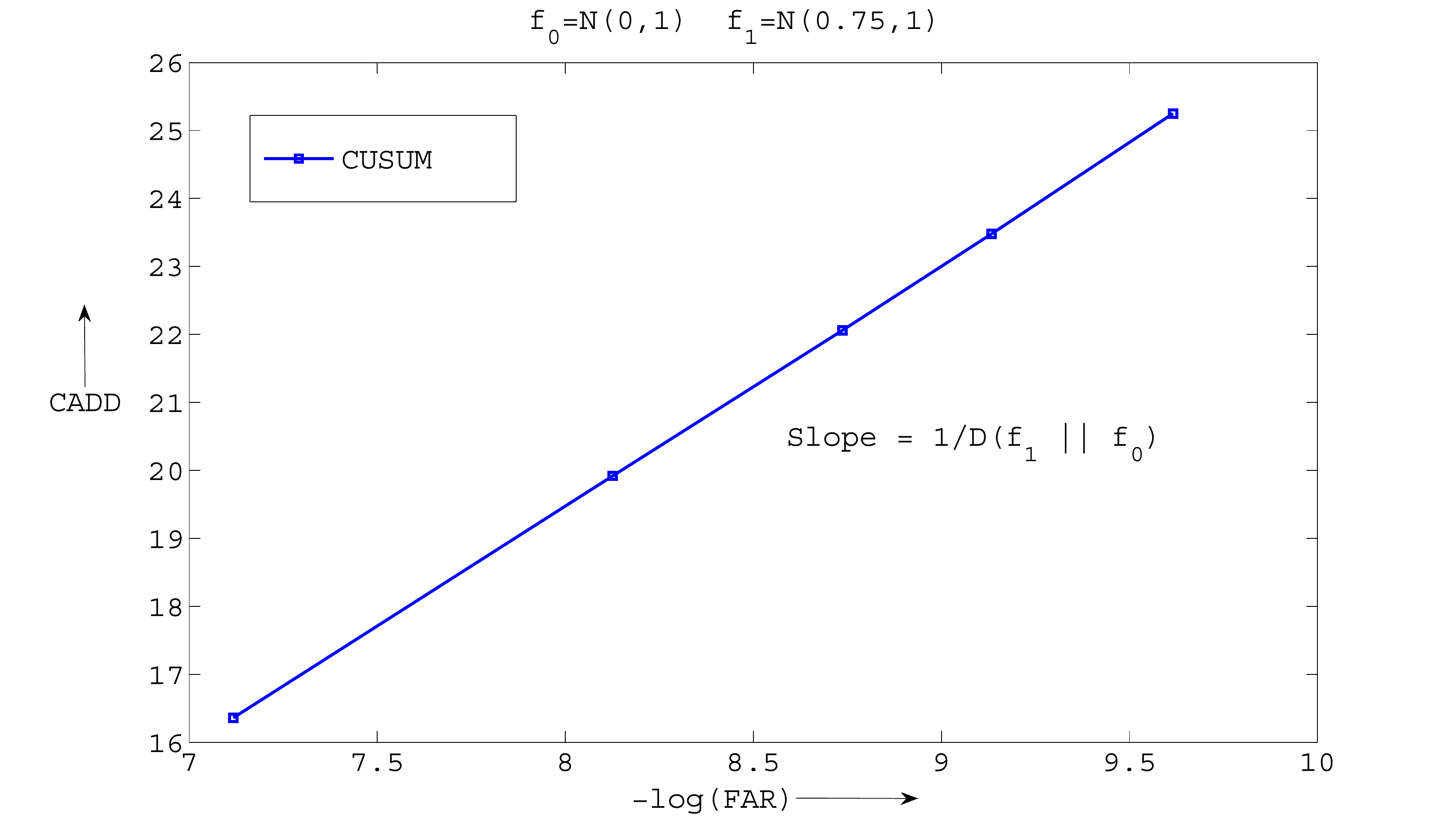}
\caption{$\ADD-\PFA$ trade-off curve for the CuSum algorithm: $f_0=\mathcal{N}(0,1)$, $f_1=\mathcal{N}(0.75,1)$}
\label{fig:CuSum_Tradeoff_slope}
\end{figure}

\subsection{General Asymptotic Minimax Theory}
\label{sec:LAI}

In \cite{lai-ieeetit-1998}, the non-i.i.d. setting earlier discussed in Section~\ref{sec:NONIIDBayes} is considered, and
asymptotic lower bounds on the $\WADD$ and $\CADD$ for stopping times in $\Dd_{\alpha}$ are obtained under quite general conditions. It is then shown that the an extension of the CuSum algorithm \eqref{eq:CuSum} to this non-i.i.d. setting achieves this lower bound asymptotically as $\alpha\to 0$.

Recall the non-i.i.d.\ model from Section~\ref{sec:NONIIDBayes}. When the process evolves in the pre-change regime, the conditional density of $X_n$ given $X_1^{n-1}$ is $f_{0,n}(X_n|X_1^{n-1})$. After the change happens, the conditional density of $X_n$ given $X_1^{n-1}$  is given by $f_{1,n}(X_n|X_1^{n-1})$. Also
\[
Y_i = \log \frac{f_{1,i}(X_i|X_1^{i-1})}{f_{0,i}(X_i|X_1^{i-1})}.
\]
The CuSum algorithm can be generalized to the non-i.i.d.\ setting as follows:
\begin{algorithm}[Generalized CuSum algorithm]\label{algo:GeneralCuSum}
Let
\[
C_n = \max_{1\leq k\leq n} \sum_{i=k}^n Y_i.
\]
Then the stopping time for the generalized CuSum is
\begin{equation}
\label{eq:CuSumgen}\taug = \inf\left\{ n\geq 1: C_n \geq b \right\}
\end{equation}
\end{algorithm}

Then the following result is proved in \cite{lai-ieeetit-1998}.
\begin{theorem}\label{thm:Lai1998Proof}
If there exists a positive constant $I$ such that the $\{Y_i\}$s satisfy the following condition
\begin{equation}\label{LaiZiCond1}
\lim_{t\to \infty} \underset{n \geq 1}{\operatorname{\sup}} \ \underset{}{\operatorname{ess \sup}}
\ \Prob_n\left( \underset{m\leq t }{\operatorname{\max}} \sum_{i=n}^{n+m} Y_i \geq I (1+\delta) n \Big| X_1, \ldots, X_{n-1}\right) = 0 \ \ \ \ \forall \delta>0
\end{equation}
then, as $\alpha\to 0$
\begin{equation} \label{eq:Lai_lb}
\underset{\tau\in \Dd_{\alpha}}{\operatorname{\inf}}\ \WADD(\tau)
     \geq \underset{\tau\in \Dd_{\alpha}}{\operatorname{\inf}}\ \CADD(\tau) \geq \frac{|\log \alpha|}{I}\Big(1+o(1)\Big).
\end{equation}
Further
\[\Expect_\infty[\taug] \geq e^b, \]
and under the additional condition of
\begin{equation}\label{LaiZiCond2}
\lim_{t\to \infty} \underset{m \geq n}{\operatorname{\sup}} \ \underset{}{\operatorname{ess \sup}}
\ \Prob_n\left( t^{-1} \sum_{i=m}^{m+t} Y_i \geq I-\delta \Big| X_1, \ldots, X_{m-1}\right) = 0 \ \ \ \ \forall \delta>0,
\end{equation}
we have
\[\CADD(\taug) \leq \WADD(\taug) \leq \frac{b}{I} \Big(1+o(1)\Big) \text{ as } b \to \infty.\]
Thus, if we set $b=|\log \alpha|$, then
\[
\FAR (\taug) = \frac{1}{\Expect_\infty[\taug]} \leq \alpha
\]
and
\[
\WADD(\taug) \leq \frac{|\log \alpha|}{I}(1+o(1))
\]
which is asymptotically equal to the lower bound in \eqref{eq:Lai_lb} up to first order. Thus $\taug$ is first-order asymptotically optimum within the class $\Dd_{\alpha}$ of tests that meet the $\FAR$ constraint of $\alpha$.
\end{theorem}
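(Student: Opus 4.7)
The plan is to establish the theorem in three pieces: the asymptotic lower bound for any stopping time in $\Dd_\alpha$, the false-alarm bound $\Expect_\infty[\taug] \geq e^b$, and the matching asymptotic upper bound $\WADD(\taug) \leq (b/I)(1+o(1))$. Combining these with the choice $b = |\log\alpha|$ yields the first-order asymptotic optimality of $\taug$ within $\Dd_\alpha$.

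For the lower bound, I would follow the change-of-measure plus Markov inequality strategy used in the Bayesian proof sketched after Theorem~\ref{thm:BayesianAsympVVV}. Fix $\epsilon, \delta > 0$ and set $L_\alpha = (1-\epsilon)|\log\alpha|/I$. The key sub-claim is that for every $\tau \in \Dd_\alpha$,
\[
\underset{n \geq 1}{\operatorname{\sup}} \ \operatorname{ess\,sup} \ \Prob_n\bigl(\tau - n < L_\alpha \,\big|\, X_1, \ldots, X_{n-1},\, \tau \geq n\bigr) \longrightarrow 0 \quad \text{as } \alpha \to 0.
\]
Granting this, Markov's inequality applied conditionally gives $\Expect_n[(\tau-n)^+ \mid X_1,\ldots,X_{n-1}] \geq L_\alpha (1-o(1))$ on $\{\tau \geq n\}$, and hence $\WADD(\tau) \geq \CADD(\tau) \geq L_\alpha (1-o(1))$, which is the claim. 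To prove the sub-claim I would change measure from $\Prob_n$ to $\Prob_\infty$ using the Radon-Nikodym derivative $\exp(\sum_{i=n}^\tau Y_i)$, restricted to the event $A_{n,\delta} = \{\max_{m \leq L_\alpha}\sum_{i=n}^{n+m}Y_i \leq I(1+\delta)L_\alpha\}$ whose conditional $\Prob_n$-probability tends to one by condition \eqref{LaiZiCond1}. On $A_{n,\delta}$ the Radon-Nikodym factor is bounded by $e^{I(1+\delta)L_\alpha} = \alpha^{-(1-\epsilon)(1+\delta)}$, which combined with the $\FAR$ constraint $\Expect_\infty[\tau] \geq 1/\alpha$ and the counting bound $\sum_n \Prob_\infty(n \leq \tau < n + L_\alpha) \leq L_\alpha$ yields the desired decay when $\delta$ is chosen small enough relative to $\epsilon$.

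The bound $\Expect_\infty[\taug] \geq e^b$ follows from the martingale property that each process $\{\exp(\sum_{i=k}^{k+m}Y_i)\}_{m \geq 0}$ is a nonnegative $\Prob_\infty$-martingale of mean one. Since $C_n = \max_{1 \leq k \leq n}\sum_{i=k}^n Y_i$, Doob's maximal inequality gives $\Prob_\infty(\max_{k \leq m \leq n}\sum_{i=k}^m Y_i \geq b) \leq e^{-b}$ for each $k$, so by a union bound $\Prob_\infty(\taug \leq n) \leq n e^{-b}$; a more refined argument exploiting the renewal structure of the CuSum excursions upgrades this to $\Expect_\infty[\taug] \geq e^b$. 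For the upper bound $\WADD(\taug) \leq (b/I)(1+o(1))$, I would use the trivial inequality $C_{n+k} \geq \sum_{i=n}^{n+k} Y_i$ together with condition \eqref{LaiZiCond2}: for any $\delta > 0$, the partial sum $\sum_{i=n}^{n+\lceil(1+\delta)b/I\rceil}Y_i$ exceeds $b$ with conditional probability tending to one uniformly in $n$ and in the pre-change history, and a standard geometric restart argument converts this probability statement into the needed expectation bound on the worst-case post-change delay.

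The main obstacle is the lower-bound step: the change-of-measure argument must survive conditioning on the entire pre-change history $X_1,\ldots,X_{n-1}$ and yield a bound that is uniform in $n$, which is precisely why condition \eqref{LaiZiCond1} is stated with an essential supremum rather than simply an almost-sure limit. Once this is in place, the bound on $\Expect_\infty[\taug]$ and the upper-bound step are comparatively routine martingale and law-of-large-numbers arguments.
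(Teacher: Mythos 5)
Your proposal follows the same route as Lai (and the paper's sketch) for the lower bound --- Markov's inequality, a change of measure from $\Prob_n$ to $\Prob_\infty$, condition \eqref{LaiZiCond1} to control the complementary event, and the $\FAR$ constraint to control the post--change-of-measure term --- but the key sub-claim as you have written it cannot hold. The quantity
\[
\sup_{n\geq 1}\ \operatorname{ess\,sup}\ \Prob_n\bigl(\tau - n < L_\alpha \mid X_1,\ldots,X_{n-1},\ \tau\geq n\bigr)
\]
does \emph{not} tend to zero uniformly over $\tau\in\Dd_\alpha$: the deterministic stopping time $\tau\equiv\lceil 1/\alpha\rceil$ belongs to $\Dd_\alpha$, and for every $n$ within $L_\alpha$ of $\lceil 1/\alpha\rceil$ the conditional probability above equals one, so the supremum over $n$ is identically one for all $\alpha$. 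What your counting bound $\sum_n\Prob_\infty(n\le\tau<n+L_\alpha)\le L_\alpha$ actually delivers is existential, not uniform: combining $\sum_n\Prob_\infty(\tau<n+L_\alpha\mid\tau\ge n)\,\Prob_\infty(\tau\ge n)\le L_\alpha$ with $\Expect_\infty[\tau]=\sum_n\Prob_\infty(\tau\ge n)\ge 1/\alpha$ forces the existence of a single $n^*$ satisfying $\Prob_\infty(\tau\ge n^*)>0$ and $\Prob_\infty(\tau<n^*+L_\alpha\mid\tau\ge n^*)\le L_\alpha\alpha$, which is precisely the pivotal condition \eqref{eq:thmLaiCond} in the paper's proof. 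Since $\CADD(\tau)=\sup_n\Expect_n[\tau-n\mid\tau\ge n]$, this one $n^*$ suffices: at $n^*$ the change-of-measure term $e^{V_\alpha}\Prob_\infty(\tau<n^*+L_\alpha\mid\tau\ge n^*)$ is $O(L_\alpha\alpha^{\delta^2})=o(1)$ with $V_\alpha=I(1+\delta)L_\alpha=(1-\delta^2)|\log\alpha|$, while condition \eqref{LaiZiCond1} handles the complementary event. Your treatment of $\Expect_\infty[\taug]\ge e^b$ and of the upper bound on $\WADD(\taug)$ is in the right spirit (the paper itself does not prove those pieces), but the lower-bound write-up needs the uniform sub-claim replaced by the explicit selection of $n^*$; that selection is exactly where the $\FAR$ constraint enters, and leaving it implicit is a genuine gap.
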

\begin{proof}
We only provide a sketch of the proof for the lower bound since it also based on the idea of using Chebyshev's inequality.
The fundamental idea of the proof is to use Chebyshev's inequality to get a lower bound
on any arbitrary stopping time $\tau$ from $\Dd_{\alpha}$,
such that the lower bound is not a function of $\tau$. The lower bound obtained
is then a lower bound on the $\CADD$ for the entire family $\Dd_{\alpha}$.

Let $L_\alpha$ and $V_\alpha$ be positive constants.
To show that
\[
\underset{n \geq 1}{\operatorname{\sup}}\ \Expect_n[\tau-n| \tau\geq n] \geq L_\alpha(1+o(1)) \text{ as } \alpha \to 0
\]
it is enough to show that there exists $n$ such that
\[
\Expect_n[\tau-n| \tau\geq n] \geq L_\alpha(1+o(1)) \text{ as } \alpha \to 0.
\]
This $n$ is obtained from the following condition. Let $m$ be any positive integer. Then
if $\tau \in \Dd_{\alpha}$,  there exists $n$ such that
\begin{equation}\label{eq:thmLaiCond}
\Prob_\infty(\tau \geq n) > 0    \quad \text{ and } \quad \Prob_\infty(\tau < n + m | \tau \geq  k) \leq m \alpha.
\end{equation}
We use the $n$ that satisfies the condition \eqref{eq:thmLaiCond}.
Then, by Chebyshev's inequality
\[
\Prob_n(\tau-n \geq L_\alpha | \tau \geq n ) \leq (L_\alpha)^{-1} \Expect_n[\tau-n| \tau \geq n].
\]
We can then write
\[
\Expect_n[\tau-n| \tau \geq n] \geq L_\alpha \ \Prob_n(\tau-n \geq L_\alpha | \tau \geq n ).
\]
Now it has to be shown that $\Prob_n(\tau-n \geq L_\alpha | \tau \geq n ) \to 1$ uniformly over the family $\Dd_{\alpha}$. To show this, we condition on $V_\alpha$.
\begin{eqnarray*}
\Prob_n\left(\tau-n < L_\alpha | \tau \geq n \right) &=& \Prob_n\left(\tau-n < L_\alpha ; \sum_{i=n}^\tau Y_i < V_\alpha | \tau \geq n \right) \\
                                          && + \Prob_n\left(\tau-n < L_\alpha ; \sum_{i=n}^\tau Y_i \geq  V_\alpha | \tau \geq n \right)
\end{eqnarray*}
The trick now is to use the hypothesis of the theorem and choose proper values of $V_\alpha$ and $L_\alpha$ such that the two terms on the right hand side of the above equations are bounded by terms that go to zero and are not a function of the stopping time $\tau$. We can write
\begin{eqnarray*}
\Prob_n\left(\tau-n < L_\alpha ; \sum_{i=n}^\tau Y_i \geq  V_\alpha | \tau \geq n \right) \leq
\underset{}{\operatorname{ess \sup}}\  \Prob_n \left( \underset{t\leq L_\alpha}{\operatorname{\max}} \sum_{i=n}^{n+t} Y_i \geq V_\alpha | X_1, \ldots, X_{n-1} \right).
\end{eqnarray*}
In \cite{lai-ieeetit-1998}, it is shown that if we choose
\[
L_\alpha = (1-\delta) \frac{|\log \alpha|}{I}
\]
and
\[
V_\alpha = (1-\delta^2) |\log \alpha|
\]
then the condition \eqref{LaiZiCond1} ensures that the above probability goes to zero.
The other term goes to zero by using a change of measure argument and using condition \eqref{eq:thmLaiCond}:
\[
\Prob_n\left(\tau-n < L_\alpha ; \sum_{i=n}^\tau Y_i < V_\alpha | \tau \geq n \right) \leq e^{V_\alpha} \Prob_\infty(\tau < n+ L_\alpha | \tau \geq n).
\]
\end{proof}

\section{Relationship Between The Models}
We have discussed the Bayesian formulation of the quickest change detection problem in Section~\ref{sec:BayesCent} and the minimax formulations
of the problem in Section~\ref{sec:MinimaxCent}.
The choice between the Bayesian and the minimax formulations is obvious, and is governed by the knowledge of the distribution of $\Gamma$. However,
it is not obvious which of the two minimax formulations should be chosen for a given application.
As noted earlier, the formulations of Lorden and Pollak are equivalent for low $\FAR$ constraints, but differences arise for
moderate values of $\FAR$ constraints.
%
%
Recent work by Moustakides \cite{mous-astat-2008}
has connected these three formulations and found possible interpretations for each of them.
We summarize the result below.

Consider a model in which
the change point is dependent on the stochastic process. That is, the probability that change happens at time $n$ depends on $\{X_1, \ldots, X_n\}$. Let
\[\pi_n = \Prob(\Gamma=n | X_1, \ldots, X_n).\]
The distribution of $\Gamma$ thus belongs to a family of distributions.
%
%
Assume that while we are trying to find a suitable
stopping time $\tau$ to minimize delay, an adversary is searching for a process $\{\pi_n\}$ such that the delay for any stopping time is maximized.
That is the adversary is trying to solve
\[J(\tau) = \underset{\{\pi_n\}}{\operatorname{\sup}}\ \ \Expect[\tau-\Gamma| \tau\geq \Gamma].\]
It is shown in \cite{mous-astat-2008} that if the adversary has access to the observation sequence, and uses this information to choose $\pi_n$, then $J(\tau)$ becomes the delay expression in Lorden's formulation \eqref{eq:LordenProb2} for a given $\tau$, i.e.,
\[J(\tau) = \underset{n \geq 1}{\operatorname{\sup}}\ \text{ess sup} \ \Expect_n[(\tau-n)^+| X_1, \dots, X_{n-1}].\]
However, if we assume that the adversary does not have access to the observations, but only has access to the test performance, then $J(\tau)$ is equal to the delay in Pollak's formulation \eqref{eq:PollakProb3}, i.e.,
\[J(\tau) = \underset{n \geq 1}{\operatorname{\sup}}\ \Expect_n[\tau-n| \tau\geq n].\]
Finally, the delay for the Shiryaev formulation \eqref{eq:BayesConstProb} corresponds to the case when $\pi_n$ is restricted to only one possibility, the distribution of $\Gamma$.


\section{Variants and Generalizations of the Quickest Change Detection Problem}
\label{sec:QCDVariants}
In the previous sections we reviewed the state-of-the-art for quickest change detection in a single sequence of random variables, under the assumption of complete knowledge of the pre- and post-change distributions.
%
%
%
In this section we review three important variants and extensions of the classical quickest change detection problem, where significant progress has been made. We discuss other variants of the change detection problem as part of our future research section.

\subsection{Quickest Change Detection with unknown pre- or post-change distributions}
\label{sec:GLRT}
In the previous sections we discussed quickest change detection problem when both the pre- and post-change distributions are completely specified. Although this assumption is a bit restrictive, it helped in obtaining
recursive algorithms with strong optimality properties in the i.i.d. setting, and allowed
the development of asymptotic optimality theory in a very general non-i.i.d. setting.
In this section,  we provide a review of the existing results for the case where  this assumption on the knowledge of the pre- and post-change distributions is relaxed.

Often it is reasonable to assume that the pre-change distribution is known. This is the case when changes
occur rarely and/or the decision maker has opportunities to estimate the pre-change distribution parameters
before the change occurs. When the post-change distribution is unknown but the pre-change distribution is completely specified, the post-change uncertainty may be modeled by assuming that the post-change
distribution belongs to a parametric family $\{\Prob_\theta\}$. Approaches for designing algorithms in this setting include the generalized likelihood ratio (GLR) based approach or the mixture based approach In the GLR based approach, at any time step, all the past observations are used to first obtain a maximum likelihood estimate of the post-change parameter $\theta$, and then the post-change distribution corresponding to this estimate of $\theta$ is used to compute the CuSum, Shiryaev-Roberts or related statistics.
In the mixture based approach, a prior is assumed on the space of parameters, and the statistics (e.g., CuSum or Shiryaev-Roberts), computed as a function of the post change parameter, are then integrated over the assumed prior.

In the i.i.d setting this problem is studied in  \cite{lord-amstat-1971} for the case when the post-change p.d.f. belongs to a single parameter exponential family $\{ f_\theta\}$,
and the following generalized likelihood ratio (GLR) based algorithm is proposed:
\begin{equation}
\label{eq:LordenGLRT}
\taug = \inf\left\{ n\geq 1:\max_{1\leq k \leq n} \sup_{|\theta|\geq \epsilon_\alpha} \left[\sum_{i=k}^n\log\frac{f_\theta(X_i)}{f_0(X_i)}\right] \geq c_\alpha \right\}.
\end{equation}
If $\epsilon_\alpha = \frac{1}{|\log\alpha|}$, and $c_\alpha$ is of the order of $|\log\alpha|$, then it is shown in \cite{lord-amstat-1971} that as the
FAR constraint $\alpha\to 0$,
\[\FAR(\taug) \leq \alpha \Big(1+o(1)\Big),\]
and for each post-change parameter $\theta\neq 0$,
\[
\WADD^\theta(\taug) \sim  \frac{|\log\alpha|}{D(f_\theta, f_0)}.
\]
Here, the notation $\WADD^{\theta}[\cdot]$ is used to denote the $\WADD$ when the post-change
observations have p.d.f $f_\theta$. Note that $\frac{|\log\alpha|}{D(f_\theta, f_0)}$ is the best one can do
asymptotically for a given FAR constraint of $\alpha$, even when the exact post-change distribution is known.
Thus, this GLR scheme is uniformly asymptotically optimal with respect to the Lorden's criterion \eqref{eq:LordenProb2}.

In \cite{poll-sieg-atat-1975}, the post-change distribution is assumed to belong to an exponential family of p.d.f.s as above, but the GLR based test is replaced by a mixture based test. Specifically, a prior $G(\cdot)$ is assumed on the range of $\theta$ and the following test is proposed:
\begin{equation}
\label{eq:PollSiegMixture}
\taum = \inf\left\{ n\geq 1:\max_{1\leq k \leq n} \left[ \int \prod_{i=k}^n\frac{f_\theta(X_i)}{f_0(X_i)} \text{d}G(\theta)\right] \geq \frac{1}{\alpha} \right\}.
\end{equation}
For the above choice of threshold, it is shown that
\[\FAR(\taum) \leq \alpha,\]
and for each post-change parameter $\theta\neq 0$, if $\theta$ is in a set over which $G(\cdot)$ has a positive density, as the FAR constraint $\alpha\to 0$,
\[\WADD^\theta[\taum] \sim \frac{|\log\alpha|}{D(f_\theta, f_0)}.\]
Thus, even this mixture based test is uniformly asymptotically optimal with respect to the Lorden's criterion.

Although the GLR and mixture based tests discussed above are efficient, they do not have an equivalent recursive implementation, as the CuSum test or the Shiryaev-Roberts tests have when the post-change distribution is known. As a result, to implement the GLR or mixture based tests, we need to use the entire past information $(X_1, \cdots, X_n)$, which grows with $n$. In \cite{lai-ieeetit-1998}, asymptotically optimal sliding-window based GLR and mixtures tests are proposed, that only used a finite number of past observations. The window size has to be chosen carefully and is a function of the FAR. Moreover, the results here are valid even for the non-i.i.d. setting discussed earlier in Section~\ref{sec:LAI}. Recall the non-i.i.d. model from Section~\ref{sec:NONIIDBayes}, with the prechange conditional p.d.f. given by  $f_{0,n}(X_n|X_1^{n-1})$, and the post-change conditional p.d.f. given by $f_{1,n}(X_n|X_1^{n-1})$.
Then the generalized CuSum \eqref{eq:CuSumgen} GLR and mixture based algorithms are given, respectively, by:
\begin{equation}
\label{eq:LaiGLRT}
\htaug = \inf\left\{ n\geq 1:\max_{n-m_\alpha\leq k \leq n-m_\alpha^{'}} \sup_{\theta} \left[\sum_{i=k}^n\log\frac{f_{\theta,i}(X_i|X_1^{i-1})}{f_{0,i}(X_i|X_1^{i-1})}\right] \geq c_\alpha \right\}
\end{equation}
and
\begin{equation}
\label{eq:LaiMixture}
\htaum = \inf\left\{ n\geq 1:\max_{n-m_\alpha\leq k \leq n} \left[ \int \prod_{i=k}^n\frac{f_{\theta,i}(X_i|X_1^{i-1})}{f_{0,i}(X_i|X_1^{i-1})} \text{d}G(\theta)\right] \geq e^{c_\alpha}\right\}.
\end{equation}
It is shown that for a suitable choice of $c_\alpha$, $m_\alpha$ and $m_\alpha^{'}$, under some conditions on the likelihood ratios and on the distribution $G$,
 both of these tests satisfy the FAR constraint of $\alpha$. In particular,
\[\FAR(\htaum) \leq \alpha,\]
and as $\alpha\to0$,
 \[\FAR(\htaug) \leq \alpha\Big(1+o(1)\Big) .\]
Moreover, under some conditions on the post-change parameter $\theta$,
\[\WADD^\theta[\htaug] \sim \WADD^\theta[\htaum] \sim \frac{|\log\alpha|}{D(f_\theta, f_0)}.\]
Thus, under the conditions stated, the above window-limited tests are also asymptotically optimal.
We remark that, although finite window based tests are useful for the i.i.d. setting here, we still need to store
the entire history of observations in the non-i.i.d. setting to compute the conditional densities, unless the
dependence is finite order Markov.
See \cite{lai-jrss-1995} and \cite{lai-ieeetit-1998} for details.

To detect a change in mean of a sequence of Gaussian observations in an i.i.d. setting,
i.e., when $f_0\sim \mathcal{N}(0,1)$ and $f_1 \sim \mathcal{N}(\mu,1)$, $\mu\neq 0$, the GLR rule discussed above \eqref{eq:LaiGLRT} (with $m_\alpha=n$ and $m_\alpha^{'}=1$) reduces to
\begin{equation}
\label{eq:SiegmundGaussianGLRT}
\tau_v = \inf\left\{ n\geq 1:\max_{0\leq k < n} \left[\sum_{i=k}^n\frac{|S_n-S_n|}{\sqrt{(n-k)}}\right] \geq b\right\}.
\end{equation}
This test is studied in \cite{sieg-venk-astat-1995} and performance of the test, i.e., expressions for $\FAR(\tau_v)$ and $\Expect_1[\tau_v]$ are obtained. In \cite{lai-jrss-1995}, it is shown that a window-limited modification of the above scheme \eqref{eq:SiegmundGaussianGLRT} can also be designed.

When both pre- and post-change distributions are not known, again GLR based or mixture based tests have been studied and asymptotic properties characterized.  In \cite{lai-xing-sqa-2010}, this problem has been studied in the i.i.d setting (Bayesian and non-Bayesian), when both pre- and post-change distributions belong to an exponential family. For the Bayesian setting, the change point is assumed to be geometric and there are priors (again from an exponential family) on both the pre- and post-change parameters. GLR and mixture based tests are proposed that have asymptotic optimality properties. For a survey of other algorithms when both the pre- and post-change distributions are not known, see \cite{lai-jrss-1995}.


In \cite{unni-etal-ieeeit-2011}, rather than developing procedures that are uniformly asymptotically optimal
for each possible post-change distribution, robust tests are characterized when the pre- and post-change
distributions belong to some uncertainty classes. The objective is to find a stopping time that satisfies a given false alarm constraint (probability of false alarm or the $\FAR$ depending on the formulation) for each
possible pre-change distribution, and minimizes the detection delay (Shiryaev, Lorden or the Pollak version)
supremized over each possible pair of pre- and post-change distributions. It is shown that under some conditions on the uncertainty classes, the \emph{least favorable distribution} from each class can be obtained, and the robust test is the classical test designed according to the lease favorable distribution pair.
It is shown in \cite{unni-etal-ieeeit-2011} that although robust test is not uniformly asymptotically optimal, it can be significantly better than the GLR based test of \cite{lord-amstat-1971} for some parameter ranges and for moderate values of $\FAR$.  The robust solution also has the added advantage that it can be implemented in a simple recursive manner, while the GLR test does not admit a recursive solution in general, and may require the
solution to a complex nonconvex optimization problem at every time instant.
%
%
\subsection{Data-efficient quickest change detection}
\label{sec:DE-QCD}
In the classical problem of quickest change detection that we discussed in Section~\ref{sec:BayesCent}, a change in the distribution of a sequence of random variables has to be detected as soon as possible, subject to a constraint on the probability of false alarm. Based on the past information, the decision maker has to decide whether to stop and declare change or to continue acquiring more information. In many engineering applications of quickest change detection there is a cost associated with acquiring information or taking observations, e.g., cost associated with taking measurements, or  cost of batteries in sensor networks, etc. (see \cite{bane-veer-sqa-2012} for a detailed motivation and related references).
In \cite{bane-veer-sqa-2012}, Shiryaev's formulation (Section~\ref{sec:BayesCent}) is extended by including an additional constraint on the cost
of observations used in the detection process. The observation cost is captured through the average number of observations used before the change point, with the understanding that the cost of observations after the change point is included in the delay metric.
In order to minimize the average number of observations used before $\Gamma$, at each time instant, a decision is made on whether to use the observation in the next time step, based on all the
available information. Let $S_n \in \{0, 1\}$, with $S_n = 1$ if it is been decided to take the observation at time $n$, i.e. $X_n$ is available for decision making, and $S_n=0$ otherwise. Thus, $S_n$ is an on-off (binary) control input based on the information available up to time $k-1$, i.e.,
\[
S_n = \mu_{k-1} (I_{k-1}), \quad k = 1, 2, \ldots
\]
with $\mu$ denoting the control law and $I$ defined as:
\[
I_n = \left[ S_1, \ldots, S_n, X_1^{(S_1)}, \ldots, X_n^{(S_n)} \right].
\]
Here, $X_i^{(S_i)}$ represents $X_i$ if $S_i=1$, otherwise $X_i$ is absent from the information vector $I_n$. 


Let $\psi = \{\tau, \mu_0, \ldots, \mu_{\tau-1} \}$ represent a policy for data-efficient quickest change detection, where
$\tau$  is a stopping time on the information sequence $\{I_n\}$.
  The objective in \cite{bane-veer-sqa-2012} is to solve the following optimization problem:
\begin{eqnarray}
\label{eq:basicproblem}
\underset{\psi}{\text{minimize}} && \text{ADD}(\psi) = \mathrm{E}\left[(\tau - \Gamma)^+\right],\\
\text{subject to } \hspace{-0.15cm} &&\text{PFA}(\psi) = \mathrm{P}(\tau<\Gamma) \leq \alpha, \nonumber \\
\text{  and  } &&\text{ANO}(\psi) = \mathrm{E}\left[\sum_{k=1}^{\min(\tau,\Gamma-1)} S_n\right] \leq \beta. \nonumber
\end{eqnarray}
Here, $\ADD$, $\PFA$ and $\ANO$ stand for average detection delay, probability of false alarm and average number of observations used, respectively, and
$\alpha$ and $\beta$ are given constraints.

Define,
\[
p_n = \Prob ( \Gamma \leq k \; | \; I_n).
\]
Then, the two-threshold algorithm from \cite{bane-veer-sqa-2012} is:
\begin{algorithm}[DE-Shiryaev: $\psi(A,B)$]
\label{algo:TwoThreshold}
Start with $p_0=0$ and use the following control, with $B<A$, for $k\geq 0$:
\begin{equation}
\label{eq:TwoThresholdAlgo}
\begin{split}
               S_{k+1} & = \mu_n (p_n) =  \begin{cases}
               0 & \text{ if } p_n < B\\
               1 & \text{ if } p_n \geq B
               \end{cases}\\
               \tau &= \inf\left\{ k\geq 1: p_n > A \right\}.
               \end{split}
\end{equation}
The probability $p_n$ is updated using the following recursions:
\[
p_{k+1} = \begin{cases}
\tilde{p_n} = p_n + (1-p_n) \rho & \text{~if~} S_{k+1} = 0\\
\frac{\tilde{p_n} L(X_{k+1})}{ \tilde{p_n}  L(X_{k+1}) + (1-\tilde{p_n} )} & \text{~if~} S_{k+1} = 1
\end{cases}
\]
with
$L(X_{k+1}) = f_1(X_{k+1})/f_0(X_{k+1})$.
\end{algorithm}

With $B=0$ the DE-Shiryaev algorithm reduces to the Shiryaev algorithm.
When the DE-Shiryaev algorithm is employed, the a posteriori probability $p_n$ typically evolves as depicted in Fig.~\ref{fig:Pkevolution}.
\begin{figure}[!tb]
  \centering
  \subfloat[Evolution of $p_n$ for $f_0 \sim {\cal N}(0,1)$, $f_1 \sim {\cal N}(0.5,1)$, and  $\rho=0.01$, with thresholds $A=0.65$ and $B=0.2$.]{\label{fig:Pkevolution}\includegraphics[width=0.48\textwidth]{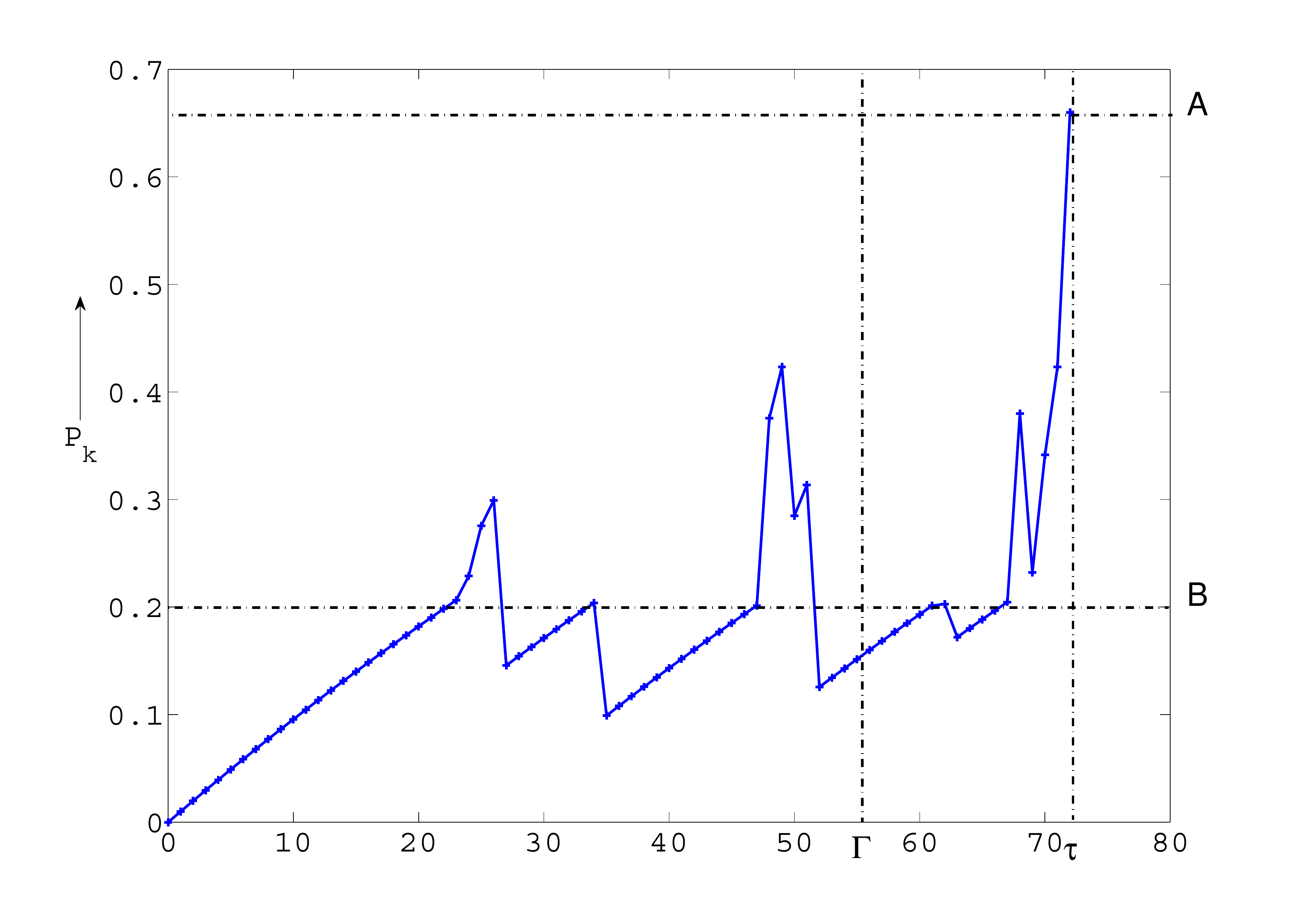}} \hspace{0.1cm}
  \subfloat[Trade-off curves comparing performance of the DE-Shiryaev  algorithm with the Fractional Sampling scheme when $B$ is chosen to achieve
  $\ANO$ equal to 50 \% of mean time to change.
  Also $f_0 \sim {\cal N}(0,1)$, $f_1 \sim {\cal N}(0.75,1)$, and $\rho=0.01$.]
  {\label{fig:CompShirDEShirFrac}\includegraphics[width=0.48\textwidth]{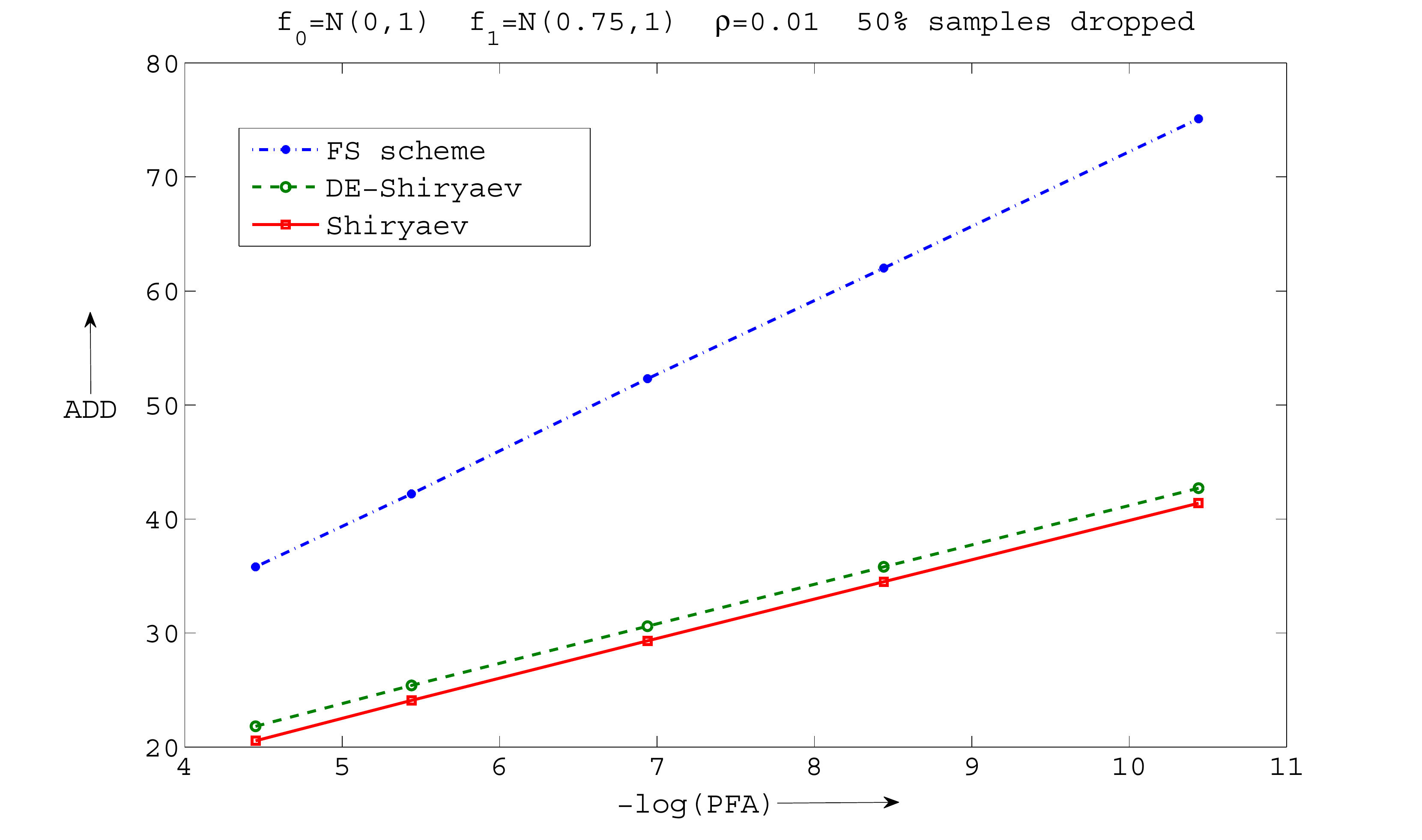}}
  \caption{Evolution and performance of the DE-Shiryaev algorithm.}
\end{figure}
It is shown in \cite{bane-veer-sqa-2012} that for a fixed $\beta$, as $\alpha\to 0$:
\begin{equation}\label{eq:BayesianDelayOpt}
\ADD(\psi(A,B)) \sim \frac{|\log(\alpha)|}{D(f_1,f_0) + |\log(1-\rho)|} \text{ as } \alpha \to 0
\end{equation}
and
\begin{equation}\label{eq:BayesianPFAOpt}
\PFA(\psi(A,B)) \sim
\alpha \left( \int_{0}^\infty e^{-x} dR(x)\right) \text{ as } \alpha \to 0.
\end{equation}
Here, $R(x)$ is the asymptotic overshoot distribution of the random walk $\sum_{k=1}^n [\log L(X_{k}) + |\log (1-\rho)|]$,
when it crosses a large positive boundary. It is shown in \cite{tart-veer-siamtpa-2005} that these are also the performance expressions for the Shiryaev
algorithm. Thus, the $\PFA$ and $\ADD$ of the DE-Shiryaev algorithm approach that of
the Shiryaev algorithm as $\alpha \to 0$, i.e., the DE-Shiryaev algorithm is asymptotically optimal.

The DE-Shiryaev  algorithm is also shown to have good delay-observation cost trade-off curves: for moderate values of probability of false alarm, for Gaussian observations, the delay of the algorithm is within 10\% of the Shiryaev delay even when the observation cost is reduced by more than 50\%.
Furthermore, the DE-Shiryaev algorithm is substantially better than the standard approach of \textit{fractional sampling} scheme, where the Shiryaev algorithm is used and where the observations to be skipped are determined a priori in order to meet the observation constraint. (See Fig.~\ref{fig:CompShirDEShirFrac}.)

In most practical applications, prior information about the distribution of the change point is not available.
As a result, the Bayesian solution is not directly applicable.  For the classical quickest change detection problem, an algorithm for the  non-Bayesian setting was obtained by taking the geometric parameter of the prior on the change point to zero (see Section~\ref{sec:minimaxAlgo}).  Such a technique cannot be used in the data-efficient setting. This is because when an observation is skipped in the DE-Shiryaev algorithm in \cite{bane-veer-sqa-2012},
the {\em a posteriori} probability is updated using the geometric prior. In the absence of prior information about the distribution of the change point,  it is by no means obvious what the right substitute for the prior is. A useful way to capture the cost of observations in a minimax setting is also needed.

In \cite{bane-veer-icassp-2012}, the minimax formulation of \cite{poll-astat-1985} is used to propose
a minimax formulation for data-efficient quickest change detection.
We observe that in the two-threshold algorithm $\psi(A,B)$, when the change occurs at a far horizon, it is the fraction of observations
taken before change that is controlled. This insight is used to formulate a \text{duty cycle} based metric to capture the cost of taking observations before the change point.
Also, we note that the duration for which observations are not taken in the algorithm in \cite{bane-veer-sqa-2012},
is also a function of the undershoot of the {\em a posteriori} probability when it goes below the threshold $B$.
Given the fact that $\frac{p_n}{1-p_n}$ for the DE-Shiryaev algorithm, has the interpretation of the
the likelihood ratio of the hypotheses ``$H_1: \Gamma \leq n$'' and ``$H_0: \Gamma > n$'',
the undershoots essentially carry the information on the likelihood ratio.
It is shown in \cite{bane-veer-icassp-2012} that this insight  can be used to design a good test in the non-Bayesian setting.
This algorithm is called the DE-CuSum algorithm and it is shown that it inherits good properties of the DE-Shiryaev algorithm. The DE-CuSum algorithm is also asymptotically optimal in a sense similar to \eqref{eq:BayesianDelayOpt} and \eqref{eq:BayesianPFAOpt}, has good trade-off curves, and performs significantly
better than the standard approach of fractional sampling.
%

\subsection{Distributed sensor systems} \label{sec:dss}
\begin{figure}
\begin{center}
\includegraphics*[width=0.4\textwidth]{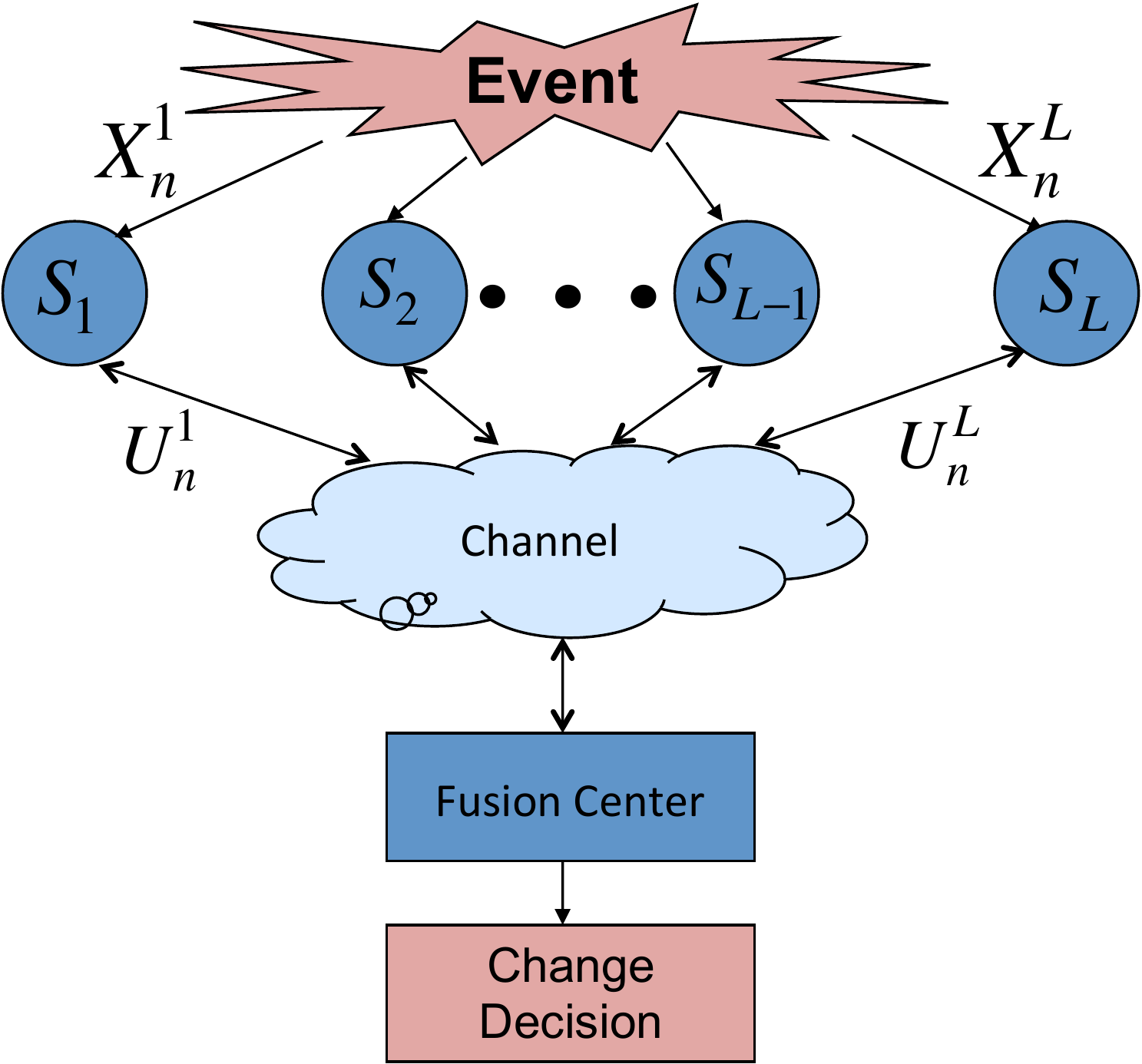}
  \caption{\small Change detection using distributed sensors
  \label{fig:ds_modes}}
\end{center}
\end{figure}
In the previous sections, we provided a  summary of existing work on quickest change detection and classification in the single sensor (equivalently, centralized multisensor) setting. For the problem of detecting biological and chemical agents, 
the setting that is more relevant is one where there is a set of distributed sensors collecting the data relevant for detection, as shown in Fig.~\ref{fig:ds_modes}. Based on the observations that the sensors receive, they send messages (which could be local decisions, but not necessarily) to a fusion center where a final decision about the hypothesis or change is made.

Since the information available for detection is distributed across the sensors in the network, these detection problems fall under the umbrella of distributed (or decentralized) detection, except in the impractical setting where all the information available at the sensors is immediately available without any errors at the fusion center.
Such decentralized decision making problems are extremely difficult. Without certain conditional independence assumptions across sensors, the problem of finding the optimal solutions, even in the simplest case of static binary hypothesis testing, is computationally intractable
\cite{tsit-dec-det-1993,vars-book-1996,will-swas-blum-ieeetsp-2000,cham-veer-ieeetsp-2003,cham-veer-ieeespm-2007,veer-vars-survey-2012}. Decentralized dynamic decision making problems, of which the quickest change detection problem is a special case, are even more challenging due to the fact that information pattern in these problems is {\em non-classical}, i.e., the different decision makers have access to different pieces of information \cite{ho-procieee-1980}.

The problem of decentralized quickest change detection  in distributed sensor systems was  introduced  in \cite{veer-ieeetit-2001}, and is described as follows. Consider the distributed multisensor system with $L$ sensors,
communicating with a fusion center shown in Fig.~\ref{fig:ds_modes}. At time $n$, an observation $X_n^\ell$ is made at sensor $\Sc_\ell$.  The changes in the statistics of the sequences $\{X_n^\ell\}$ are governed by the event. Based on the information available at time $n$, a message $U_n^\ell$ is sent from sensor $\Sc_\ell$ to the fusion center.  The fusion center may possibly feedback some control signals to the sensors based on all the messages it has received so far. For example, the fusion center might inform the sensors how to update their local decision thresholds. The final decision about the change is made at the fusion center.

There are two main approaches to generating the messages at the sensors. In the first approach, the sensors can be thought of simply quantizing their observations, i.e., $U_n^\ell$ is simply a quantized version of $X_n^\ell$. The goal then is to choose these quantizers over time and across the sensors, along with a decision rule at the fusion center, to provide the best tradeoff between detection delay and false alarms. In the second approach, the sensors perform local change detection, using all of their observations, and the fusion center combines these decisions to make the final decision about the change.

The simplest observation model for the decentralized setting is one where the sensors are affected by the change at the same time, and the observations are i.i.d. in time at each sensor and independent across sensors in both the pre-change and post-change modes. This model was introduced in \cite{veer-ieeetit-2001}, and studied in a Bayesian setting with a geometric prior on the change point for the case of quantization at the sensors.  It was shown that, unlike  the centralized problem for which the Shiryaev test is optimal (see Section~\ref{sec:BayesCent}), in the decentralized setting the optimization problem is intractable in even for this simple observation model. Some progress can be made if we allow for feedback from the fusion center \cite{veer-ieeetit-2001}. Useful results can be obtained in the asymptotic setting where the probability of false (Bayesian formulation) or false alarm rate (minimax formulation) go to zero. These results can be summarized as follows (see, e.g., \cite{tart-veer-asm-book-2004, tart-veer-siamtpa-2005,tart-veer-sqa-2008} for more details):
\begin{itemize}
\item It is asymptotically optimum for the sensors to use {\em stationary monotone likelihood ratio quantizers}, i.e., the sensors use the same quantization functions at all times, and the quantization regions are obtained by dividing the likelihood ratio of the observations into intervals and assigning the quantization levels to them in increasing order.
\item The optimum quantization thresholds at the sensors are chosen to maximize the K-L divergence between the post-change and pre-change distributions at the sensors.
\item For fixed stationary quantizers at the sensors, the fusion center is faced with a centralized quickest change detection problem. Therefore, depending on the optimization criterion (Bayes or minimax), asymptotically optimum change detection procedures can be designed using the techniques described in Sections~\ref{sec:BayesCent} and \ref{sec:MinimaxCent}
\item The tradeoff between delay and false alarms is governed by the K-L divergence of the quantized observations at the output of the sensors, and hence the first order asymptotic performance with quantization is at best equal to that without quantization.
\end{itemize}

For the case where the sensors make local decisions about the change, it is reasonable to assume that the local detection procedures use (asymptotically) optimum centralized (single sensor) statistics. For example, in the Bayesian setting, the Shiryaev statistic described in Algorithm~\ref{algo:Shiryaev} can be used, and in the minimax setting one of the statistics described in Section~\ref{sec:minimaxAlgo} can be used depending on the specific minimax criterion used. The more interesting aspect of the decision-making here is the fusion rule used to combine the individual sensor decisions. There are three main basic options that can be considered \cite{tart-veer-sqa-2008}:
\begin{itemize}
\item $\tau_\text{min}$: the fusion center stops and declares the change as soon as one of the sensors' statistics crosses its decision threshold.
\item $\tau_\text{max}$: the sensors stop taking observations once their local statistics cross their thresholds, and the fusion center stops and declares the change after all sensors have stopped.
\item $\tau_\text{all}$: the sensors continue taking observations even if their local statistics cross their thresholds, and the fusion center stops and declares the change after all the sensor statistics are above their local thresholds simultaneously.
\end{itemize}
It was first shown by \cite{mei-ieeetit-2005} that the $\tau_\text{all}$ procedure using CuSum statistics at the sensors is globally first order asymptotically optimal under Lorden's criterion \eqref{eq:LordenProb2} if the sensor thresholds are chose appropriately.  That is, the first order performance is the same as that of the centralized procedure that has access to all the sensor observations.  A more detailed analysis of minimax setting was carried out in \cite{tart-kim-fusion-2006}, in which procedures based on using CuSum and Shiryaev-Roberts statistics at the sensors were studied under Pollak's criterion \eqref{eq:PollakProb3}.  It was again shown that $\tau_\text{all}$ is globally first order asymptotically optimal, and that $\tau_\text{max}$ and $\tau_\text{min}$ are not.

For the Bayesian case, if the sensors use Shiryaev statistics, then both  $\tau_\text{max}$ and $\tau_\text{all}$  can be shown to be globally first order asymptotically optimal, with an appropriate choice of sensor thresholds \cite{tart-veer-fusion-2003,tart-veer-sqa-2008}. The procedure $\tau_\text{min}$ does not share this asymptotic optimality property.

Interestingly, while tests based on local decision making at the sensors can be shown to have the same first order performance as that of the centralized test, simulations reveal that these asymptotics ``kick in" at unreasonably low values of false alarm probability (rate). In particular, even schemes based on {\em binary} quantization at the sensors can perform better than the globally asymptotically optimum local decision based tests at moderate values of false alarm probability (rate) \cite{tart-veer-sqa-2008}. These results  point to the need for further research
on designing procedures that perform local detection at the sensors that provide good performance at moderate levels of false alarms.

\subsection{Variants of quickest change detection problem for distributed sensor systems }

In Section~\ref{sec:dss}, it is assumed for the decentralized quickest change detection problem, that the change affects all the sensors in the system simultaneously. In many practical systems it is reasonable to assume that the change will be seen by only a subset of the sensors. This problem can be modeled as quickest change detection with unknown post-change distribution, with a finite number of possibilities. A GLRT based approached can of course be used, in which multiple CuSum tests are run in parallel, corresponding to each possible post-change hypotheses. But this can be quite expensive from an implementation view point. In \cite{mei-biometrica-2010}, a CuSum based algorithm is proposed in which,
at each sensor a CuSum test is employed, the CuSum statistic is transmitted from the sensors to the fusion center, and at the fusion center, the CuSum statistics from all the sensors are added and compared with a threshold. This test has much lower computational complexity as compared to the GLR based test, and is shown to be asymptotically as good as the centralized CuSum test, as the $\FAR$ goes to zero.
Although this test is asymptotically optimal, the noise from the sensors not affected by change can degrade the performance for moderate values of false alarm rate. In \cite{mei-isit-2011}, this work of \cite{mei-biometrica-2010}, is extended to the case where information is transmitted from the sensors only when the CuSum statistic at each sensor is above a certain threshold. It is shown that this has the surprising effect of suppressing the unwanted noise and improving the performance. In \cite{xie-sieg-jstatmeet-2011}, it is proposed that a \textit{soft-thresholding} function should be used to suppress these noise terms,
and a GLRT based algorithm is proposed to detect presence of a stationary intruder (with unknown position)
in a sensor network with Gaussian observations.  A similar formulation is described in  \cite{prem-anurag-kuri-allerton-2009}.

The Bayesian decentralized quickest change detection problem under an additional constraint on the cost of observations used is studied in \cite{prem-kuma-infocom-2008}. The cost of observations is captured through the average number of observations used until the stopping time and it is shown that a threshold test similar to the Shiryaev algorithm is optimal. Recently, this problem has been studied in a minimax setting in \cite{bane-veer-ssp-2012} and asymptotically minimax algorithms have been proposed. Also, see \cite{bane-etal-ieeetwc-2011} and \cite{zach-sund-ieeetwc-2008} for other interesting energy-efficient algorithms for
quickest change detection in sensor networks.

\section{Applications of quickest change detection}
\label{sec:QCDapplications}
As mentioned in the introduction, the problem of quickest change detection has  a variety of applications.
A complete list of references to applications of quickest change detection can be quite overwhelming and therefore we only provide representative references from some of the areas. For a detailed list of references to application in  areas such as climate modeling, econometrics, environment and public health, finance, image analysis, navigation, remote sensing, etc., see \cite{poor-hadj-qcd-book-2009} and \cite{bass-niki-change-det-book-1993}.
\begin{enumerate}
 \item \textit{Statistical Process Control (SPC):} As discussed in the introduction, algorithms are required
that can detect a sudden fault arising in an industrial process or a production process. In recent years algorithms for SPC with sampling rate and sampling size control have  also been developed to minimize the cost associated with sampling \cite{taga-jqt-1998}, \cite{stou-etal-jamstaa-2000}. See \cite{stou-reyno-nla-2005}, and \cite{makis-opres-2008} and the references therein for some recent contributions.
\item \textit{Sensor networks:} As discussed in \cite{bane-veer-sqa-2012}, quickest change detection algorithms can be employed in sensor networks for infrastructure monitoring \cite{rice-etal-sss-2010}, or for habitat monitoring \cite{main-etal-wsna-2002}. Note that in these applications, the sensors are deployed for a long time, and the change may occur rarely. Therefore, data-efficient quickest change detection algorithms are needed (see Section~\ref{sec:DE-QCD}).
\item \textit{Computer network security:} Algorithms for quickest change detection have been applied in the detection of abnormal behavior in computer networks due to security breaches \cite{thot-ji-ieeetsp-2003}, \cite{tart-et-al-ieeetsp-2006}, \cite{baras-etal-ieeenet-2009}.
\item \textit{Cognitive radio:} Algorithms based on the CuSum algorithm or other quickest change detection algorithms can be developed for cooperative spectrum sensing in cognitive radio networks to detect activity of a primary user. See \cite{bane-etal-ieeetwc-2011}, \cite{llai-etal-globe-2008},  \cite{arun-vinod-ICUMT-2009} and \cite{arun-vinod-UKIWCWS-2009}.
\item \textit{Neuroscience:} The evolution of the Shiryaev algorithm is found to be similar
to the dynamics of the \textit{Leaky Integrate-and-Fire} model for neurons \cite{yu-adnips-2006}.
\item \textit{Social Networks:} It is suggested in \cite{frisen-sqa-2009} and \cite{fien-shmu-statmed-2005}
that the algorithms from the change detection literature can be employed to detect the onset of the
outbreak of a disease, or the effect of a bioterroist attack, by monitoring drug sales at retail stores.
\end{enumerate}
\section{Conclusions and future directions}
\label{sec:ConclusionFutureDirect}
In this article we reviewed the state-of-the-art in the theory of quickest change detection. We saw that while exactly or nearly optimal algorithms are available only for the i.i.d. model and for the detection of a change in a single sequence of observations, asymptotically optimal algorithms can be obtained in a much broader setting. We discussed the uniform asymptotic optimality of GLR and mixture based tests, when the post-change distribution is not known. We discussed algorithms for data-efficient quickest change detection, and showed that they are also asymptotically equivalent to their classical counterparts. For the decentralized quickest change detection model, we discussed various algorithms that are asymptotically optimal. We also reviewed the asymptotic optimality theory in the Bayesian as well as in the minimax setting for a general non-i.i.d.\ model, and showed that extensions of the Shiryaev algorithm and the CuSum algorithm to the non-i.i.d.\ setting are asymptotically optimal. Nevertheless, the list of topics discussed in this article is far from exhaustive.

Below we enumerate possible future directions in which the quickest change detection problem can be explored. We also provide references to some recent articles in which some research on these topics has been initiated.
\begin{enumerate}

\item \textit{Transient change detection:} It is assumed throughput this article that the change is persistent, i.e., once the change occurs, the system stays in the post-change state forever. In many applications
it might be more appropriate to model change as \textit{transient}, i.e., the system only stays in the post-change state for a finite duration and then returns to the pre-change state; see e.g., \cite{han-will-abra-ieeetsp-1999}, \cite{wang-will-ieeetsp-2000} and \cite{prem-etal-iwap-2010}. In this setting, in addition to false alarm and delay metrics, it may be of interest to consider metrics related to the detection of the change while the system is still in the change state.
\item \textit{Change propagation:} In applications with multiple sensors, unlike the model assumed in Section~\ref{sec:dss},  it may happen that the change does not affect all the sensors simultaneously \cite{hadj-zhan-poor-ieeetit-2009}. The change may \textit{propagate} from one sensor to the next, with the statistics of the propagation process being known before hand \cite{ragh-veer-ieeetit-2010}.
\item \textit{Multiple change points in networks}:  In some monitoring application there may be multiple change points that affect different sensors in a network, and the goal is to exploit the relationship between the change points and the sensors affected to detect the changes \cite{nguy-amin-raja-isit-2012}.
 \item \textit{Quickest change detection with social learning:} In the classical quickest change detection problem, to compute the statistic at each time step, the decision maker has access to the entire past observations. An interesting variant of the problem is quickest change detection with social learning, when the time index is replaced by an agent index, i.e., when the statistic is updated over agents and not over time, and the agents do not have access to the entire past history of observations but only to some processed version (e.g., binary decisions) from the previous agent; see \cite{kris-icassp-2012}, \cite{kris-ieeetit-2011} and \cite{kris-ieeetit-2012}.
\item \textit{Change Detection with Simultaneous Classification:}
In many applications, the post-change distribution is not uniquely specified and may come from one of multiple hypotheses $H_1,\dots,H_M$, in which case along with detecting the change it of interest to identify which hypothesis is true. See, e.g., \cite{niki-ieeetit-2003} and \cite{tart-sqa-2008}.
\item \textit{Synchronization issues:} If a quickest change detection algorithm is implemented in a sensor network where sensors communicate with the fusion center using a MAC protocol, the fusion center might receive asynchronous information from the sensors due to  networking delays. It is of interest to develop algorithms that can detect changes while handling MAC layer issues; see, e.g.,  \cite{prem-anurag-kuri-allerton-2009}.
\end{enumerate}

\section*{Acknowledgement}
The authors would like to thank Prof. Abdelhak Zoubir for useful suggestions, and Mr. Michael Fauss  and Mr. Shang Kee Ting for their detailed reviews of an early draft of this work. The authors would also like to acknowledge the support of  the National Science Foundation under grants CCF 0830169 and CCF 1111342, through the University of Illinois at Urbana-Champaign, and  the U.S. Defense Threat Reduction Agency through subcontract 147755 at the University of Illinois from prime award HDTRA1-10-1-0086.

\pagebreak
\bibliographystyle{ieeetr}


\bibliography{QCD_verSubmitted}


Venugopal V. Veeravalli (M'92--SM'98--F'06) received the B.Tech. degree (Silver Medal Honors) from the Indian Institute of Technology, Bombay, in 1985, the M.S. degree from Carnegie Mellon University, Pittsburgh, PA, in 1987, and the Ph.D. degree from the University of Illinois at Urbana-Champaign, in 1992, all in electrical engineering.

He joined the University of Illinois at Urbana-Champaign in 2000, where he is currently a Professor in the department of Electrical and Computer Engineering and the Coordinated Science Laboratory. He served as a Program Director for communications research at the U.S. National Science Foundation in Arlington, VA from 2003 to 2005.  He has previously held academic positions at Harvard University, Rice University, and Cornell University, and has been on sabbatical at MIT, IISc Bangalore, and Qualcomm, Inc. His research interests include wireless communications, distributed sensor systems and networks, detection and estimation theory, and information theory.

Prof. Veeravalli was a Distinguished Lecturer for the IEEE Signal Processing Society during 2010--2011. He has been on the Board of Governors of the IEEE Information Theory Society. He has been an Associate Editor for Detection and Estimation for the IEEE Transactions on Information Theory and for the IEEE Transactions on Wireless Communications. Among the awards he has received for research and teaching are the IEEE Browder J. Thompson Best Paper Award, the National Science Foundation CAREER Award, and the Presidential Early Career Award for Scientists and Engineers (PECASE).

{\bf Taposh Banerjee:} Taposh Banerjee received an M.E. in Telecommunications from the ECE Department of the Indian Institute of Science. He is now pursuing his Ph.D. at the Coordinated Science Laboratory and the ECE Department at the University of Illinois at Urbana-Champaign. His research interests are in detection and estimation theory, sequential analysis and wireless communications and networks.

\end{document}